\numberwithin{equation}{section}
\newtheorem*{property*}{Property \csname @currentlabel\endcsname}
\newtheorem{theorem}{Theorem}[section]
\newtheorem{lemma}[theorem]{Lemma}
\newtheorem{corollary}[theorem]{Corollary}
\theoremstyle{definition}
\newtheorem{example}[theorem]{Example}
\newtheorem{problem}[theorem]{Problem}
\newtheorem{problems}[theorem]{Problems}
\newtheorem{remark}[theorem]{Remark}
\theoremstyle{remark}
\newenvironment{romenumerate}{\begin{enumerate}% gives (i), (ii) etc.
 }{\end{enumerate}}
\newcounter{oldenumi}
\newcounter{thmenumerate}
\newenvironment{thmenumerate}
{\setcounter{thmenumerate}{0}%
 \def\item{\par% \ifnum\thethmenumerate=0\else\par\fi %\noindent\fi
 \refstepcounter{thmenumerate}\textup{(\roman{thmenumerate})\enspace}}
}
{}
\newcounter{xenumerate}   %no left indentation; thus wider lines
\newcommand\pfitem[1]{\par(#1):}
\newcommand{\refT}[1]{Theorem~\ref{#1}}
\newcommand{\refL}[1]{Lemma~\ref{#1}}
\newcommand{\refR}[1]{Remark~\ref{#1}}
\newcommand{\refS}[1]{Section~\ref{#1}}
\newcommand{\refE}[1]{Example~\ref{#1}}
\newcommand{\refand}[2]{\ref{#1} and~\ref{#2}}
\newcommand{\Bigmid}{\,\Big|\,}
\newcommand\marginal[1]{\marginpar{\raggedright\parindent=0pt\tiny #1}}
\xdef\klockan{\the\count1.0\the\count255}
\xdef\klockan{\the\count1.\the\count255}\fi
\newcommand{\sumii}{\sum_{i=1}^\infty}
\newcommand{\sumin}{\sum_{i=1}^n}
\newcommand{\sumj}{\sum_{j=0}^\infty}
\newcommand{\sumji}{\sum_{j=1}^\infty}
\newcommand{\sumk}{\sum_{k=0}^\infty}
\newcommand{\sumki}{\sum_{k=1}^\infty}
\newcommand\set[1]{\ensuremath{\{#1\}}}
\newcommand\Bigset[1]{\ensuremath{\Bigl\{#1\Bigr\}}}
\newcommand\xpar[1]{(#1)}
\newcommand\bigpar[1]{\bigl(#1\bigr)}
\newcommand\Bigpar[1]{\Bigl(#1\Bigr)}
\newcommand\biggpar[1]{\biggl(#1\biggr)}
\newcommand\lrpar[1]{\left(#1\right)}
\def\rompar(#1){\textup(#1\textup)}    % usage: \rompar(...)
\newcommand\xfrac[2]{#1/#2}
\newcommand\parfrac[2]{\Bigpar{\frac{#1}{#2}}}
\def\xexp(#1){e^{#1}}
\newcommand\ceil[1]{\lceil#1\rceil}
\newcommand\floor[1]{\lfloor#1\rfloor}
\newcommand\ntoo{\ensuremath{{n\to\infty}}}
\newcommand\downto{\searrow}
\newcommand\upto{\nearrow}
\newcommand\half{\tfrac12}
\newcommand\thalf{\tfrac12}
\newcommand\ie{i.e.\spacefactor=1000}
\newcommand\eg{e.g.\spacefactor=1000}
\newcommand\cf{cf.\spacefactor=1000}
\newcommand{\q}{\quad}
\newcommand{\tend}{\longrightarrow}
\newcommand\pto{\overset{\mathrm{p}}{\tend}}
\newcommand\eqd{\overset{\mathrm{d}}{=}}
\newcommand\bbR{\mathbb R}
\newcommand\bbZ{\mathbb Z}
\newcounter{CC} 
\newcommand{\CC}{\stepcounter{CC}\CCx} %new constant C_i
\newcommand{\CCx}{C_{\arabic{CC}}}     %repeats the last C_i
\newcommand{\CCdef}[1]{\xdef#1{\CCx}}     %defines #1 as the last C_i
\newcounter{cc}
\newcommand{\cc}{\stepcounter{cc}\ccx} %new constant c_i
\newcommand{\ccx}{c_{\arabic{cc}}}     %repeats the last c_i
\newcommand{\ccdef}[1]{\xdef#1{\ccx}}     %defines #1 as the last c_i
\newcommand\E{\operatorname{\mathbb E{}}}
\renewcommand\P{\operatorname{\mathbb P{}}}
\newcommand\Cov{\operatorname{Cov}}
\newcommand\Po{\operatorname{Po}}
\newcommand\Bin{\operatorname{Bin}}
\newcommand\gd{\delta}
\newcommand\gl{\lambda}
\newcommand\eps{\varepsilon}
\newcommand\rc{\text{random-cluster}}
\newcommand\cE{\mathcal E}
\newcommand\cG{\mathcal G}
\newcommand\cH{\mathcal H}
\newcommand\cL{{\mathcal L}}
\newcommand\cN{\mathcal N}
\newcommand\cP{\mathcal P}
\newcommand\cS{{\mathcal S}}
\newcommand\cX{{\mathcal X}}
\def\[#1]{[\![#1]\!]}
\newcommand\qq{^{1/2}}
\newcommand\qqw{^{-1/2}}
\newcommand\qw{^{-1}}
\renewcommand{\=}{:=}
\newcommand\oi{[0,1]}
\newcommand\ooox{[0,\infty)}
\newcommand\dtv{d_{\mathrm{TV}}}
\newcommand{\pgf}{probability generating function}
\newcommand\lhs{left hand side}
\newcommand\rhs{right hand side}
\newcommand\gnx[1]{\ensuremath{G_{n,#1}}}
\newcommand\gnxs[1]{G_{n,#1;\cS}}
\newcommand\gmxsi[1]{G_{m,#1;\cS'}}
\newcommand\gnp{\gnx{p}}
\newcommand\gnps{\gnxs{p}}
\newcommand\gnl{\gnx{\gl/n}}
\newcommand\gnln{\gnx{\gl_n/n}}
\newcommand\gnls{\gnxs{\gl/n}}
\newcommand\gnlns{\gnxs{\gl_n/n}}
\newcommand\gmlnsi{\gmxsi{\gl_n/n}}
\newcommand\knxsk[1]{K^{(k)}_{n,#1;\cS}}
\newcommand\knpsk{\knxsk{p}}
\newcommand\knlnsk{\knxsk{\gl_n/n}}
\newcommand\Gnlns{\Gamma_{n,\gl_n/n;\cS}}
\newcommand\Gnps{\Gamma_{n,p;\cS}}
\newcommand\GGnlns{\Gamma^{(2)}_{n,\gl_n/n;\cS}}
\newcommand\GGnps{\Gamma^{(2)}_{n,p;\cS}}
\newcommand\Gnd{\Gamma_{n,\dd}}
\newcommand\GGnd{\Gamma^{(2)}_{n,\dd}}
\newcommand\znxs[1]{Z_{n,#1;\cS}}
\newcommand\znps{\znxs{p}}
\newcommand\znls{\znxs{\gl/n}}
\newcommand\znlns{\znxs{\gl_n/n}}
\newcommand\zxnmus{Z_{n,\nu;\cS}^*}
\newcommand\zxnls{\znls^*}
\newcommand\zxnlns{\znlns^*}
\newcommand\PoS{\Po_{\cS}}
\newcommand\bbZo{\bbZ_{\ge0}}
\newcommand\bbZi{\bbZ_{\ge1}}
\newcommand\nn{\mathbf{n}}
\newcommand\nny{\mathbf{n'}}
\newcommand\ny{n'}
\newcommand\nnz{\mathbf{\widehat n}}
\newcommand\nz{\widehat n}
\newcommand\Nz{\widehat N}
\newcommand\pz{\widehat p}
\newcommand\ppz{\mathbf{\widehat p}}
\newcommand\ppy{\mathbf{\ol p}}
\newcommand\pq{\ol p}
\newcommand\pix{\boldsymbol{\pi}}
\newcommand\cNS{\cN_\cS}
\newcommand\cNSn{\cNS^n}
\newcommand\cnx{\cN^*}
\newcommand\sumjs{\sum_{j\in\cS}}
\newcommand\phis{\phi_\cS}
\newcommand\psis{\psi_\cS}
\newcommand\psisx{\psi_{\cS,1}}
\newcommand\psisxx{\psi_{\cS,2}}
\newcommand\psisxj{\psi_{\cS,j}}
\newcommand\sgraph{$\cS$-graph}
\newcommand\ssubgraph{$\cS$-subgraph}
\newcommand\smultigraph{$\cS$-multigraph}
\newcommand\egl{\ensuremath{E(\gl)}}
\newcommand\eglo{\ensuremath{E_0(\gl)}}
\newcommand\eglx[1]{\ensuremath{E(#1)}}
\newcommand\eglox[1]{\ensuremath{E_0(#1)}}
\newcommand\mux{{\widehat\mu}}
\newcommand\muxmin{{\widehat\mu}_*}
\newcommand\muxmax{{\widehat\mu}^*}
\newcommand\mui{\mu'}
\newcommand\gammax{\widehat{\gamma}}
\newcommand\zetax{\widehat{\zeta}}
\newcommand\xix{\widehat{\xi}}
\newcommand\chimux{\chi_{\mux}}
\newcommand\rx{\widehat r}
\newcommand\xmu{X_\mu}
\newcommand\xmux{X_\mux}
\newcommand\xmus{X_{\mu,\cS}}
\newcommand\wmu{W_{\mu}}
\newcommand\wmur{W_{\mu,r}}
\newcommand\gnmux{\gnx\nu^*}
\newcommand\gnmuxs{\gnxs\nu^*}
\newcommand\gxnln{\gnln^*}
\newcommand\gxnlns{\gnlns^*}
\newcommand\gxmn{\gmlnsi^*}
\newcommand\gxnl{\gnl^*}
\newcommand\gxnls{\gnls^*}
\newcommand\gxn{\cG^*_n}
\newcommand\gxns{\cG^*_{n;\cS}}
\newcommand\gnd{\gnx{\dd}}
\newcommand\gnddx{\gnx{\dd}^*}
\newcommand\prodin{\prod_{i=1}^n}
\newcommand\ann{z(\nn)}
\newcommand\anny{z(\nny)}
\newcommand\annz{z(\nnz)}
\newcommand\ooo{_0^\infty}
\newcommand\sumks{\sum_{k\in\cS}}
\newcommand\wwi{^{(1)}}
\newcommand\wwx[1]{^{(#1)}}
\newcommand\ee{\mathbf e}
\newcommand\aae{A_2^\eps}
\newcommand\aao{A_2^0}
\renewcommand\O{\text{\rm O}}
\renewcommand\o{\text{\rm o}}
\newcommand\edg{\text{\rm e}}
\newcommand\ver{\text{\rm v}}
\newcommand\ol[1]{\overline{#1}}
\newcommand\dn{\ensuremath{(d_i)_1^n}}
\newcommand\dd{\ensuremath{\mathbf d}}
\newcommand\simple{\text{\rm\ is simple}}
\newcommand\glo{\gl_0}
\newcommand\muo{\mu_0}
\newcommand\mua{\mu_1}
\newcommand\ddmu{\frac{d}{d\mu}}
\newcommand\dddmu{\ddmu}
\newcommand\ddpmu{\frac{\partial}{\partial\mu}}
\newcommand\ddpgl{\frac{\partial}{\partial\gl}}
\newcommand\Lex{\widetilde\Lambda}
\newcommand\glx{\hat\gl}
\newcommand\ioi{\bigpar{1+\o(1)}}
\newcommand\lest{\le_{\text{\rm st}}}
\newcommand\SI{{\cS-1}}
\newcommand\PoSI{\Po_{\SI}}
\newcommand\cXX{\ol{\mathcal X}}
\newcommand\CS{Cauchy--Schwarz}
\newcommand\CSineq{\CS{} inequality}
\newcommand{\maple}{\texttt{Maple}}
\newcommand\REM[1]{{\raggedright\texttt{[#1]}\par\marginal{XXX}}}
\newcommand\urladdrx[1]{{\urladdr{\def~{{\tiny$\sim$}}#1}}}
\begin{document}
\title[Random graphs with forbidden vertex degrees]
{Random graphs with\\ forbidden vertex degrees}

\date{December 1, 2007 (typeset \today{} \klockan)} 

\author{Geoffrey Grimmett}
\address{Statistical Laboratory, Centre for Mathematical Sciences,
Cambridge University, Wilberforce Road, Cambridge CB3 0WB, UK}
\email{g.r.grimmett@statslab.cam.ac.uk}
\urladdrx{http://www.statslab.cam.ac.uk/~grg/}

\author{Svante Janson}
\address{Department of Mathematics, Uppsala University, PO Box 480,
SE-751~06 Uppsala, Sweden}
\email{svante.janson@math.uu.se}
\urladdrx{http://www.math.uu.se/~svante/}

%\keywords{<keywords>}
%\subjclass[2000]{} 
%{60C05 (68P10,68W40)} %%{Primary: <subject>; Secondary: <subject>}

\begin{abstract} 
We study the random graph $G_{n,\gl/n}$ conditioned on the event
that all vertex degrees lie in some given subset $\cS$ of the non-negative
integers. Subject to a certain hypothesis on $\cS$, the 
empirical distribution of the vertex degrees is
asymptotically Poisson
with some parameter $\mux$ given as the root of a certain
`characteristic equation' of  
$\cS$ that maximises a certain function $\psis(\mu)$. 
Subject to a hypothesis on $\cS$,
we obtain a partial description
of the structure of such a random graph, including a condition
for the existence (or not) of a giant component. The requisite
hypothesis
is in many cases benign, and applications are presented to a
number of choices for the set $\cS$ including the sets of (respectively)
even and odd numbers. The random \emph{even} graph is related to
the \rc\ model on the complete graph $K_n$.
\end{abstract}

\subjclass[2000]{05C80, 05C07}
\keywords{Random graph, even graph, \rc\ model.}
 
\maketitle

\section{Introduction}\label{Sintro}

Let $\cS$ be a fixed nonempty set of non-negative integers.
The purpose of this paper is to study the structure of random graphs
having all their vertex degrees restricted to the set $\cS$.

We call a graph an \emph{\sgraph} if all its
vertex degrees belong to $\cS$. For example, if $\cS=\set s$ is a
singleton, an \sgraph{} is the same as a regular graph of degree $s$. 
(We are not going to say anything new about this case.)
One of our main examples is the class of \emph{Eulerian graphs}, or
\emph{even graphs}, given by the set of even numbers
$\cS=2\bbZo$, with $\bbZo$ the set $\{0,1,2,\dots\}$ of non-negative integers.
See \refS{Sex} for further examples.

More precisely, 
we will study the random graph $\gnps$ defined as $\gnp$
\emph{conditioned on being an \sgraph},
where $\gnp$ is the standard 
random subgraph of the (labelled) complete graph $K_n$
where two vertices are joined by an edge with probability $p\in(0,1)$, and
these $\binom n2$ events, corresponding to the edges of $K_n$, are
independent.
In other words, $\gnps$ is a random $\cS$-subgraph of $K_n$ such that, if $G$
is any given subgraph of $K_n$ that is an \sgraph, then
\begin{equation}\label{gnps}
  \P(\gnps=G)
=
\frac{p^{\edg(G)}(1-p)^{\binom n2-\edg(G)}}{\P(\gnp \text{ is an \sgraph})},
\end{equation}
where  $\edg(G)$ is the number of edges of $G$.
We are interested in asymptotics as \ntoo, and we will tacitly 
consider only $n$ such that there exists an $\cS$-graph with  $n$ vertices, in
other words, such that the denominator in \eqref{gnps} is non-zero.
Thus, 
a finite number of small $n$ may be excluded;
moreover, if $\cS$ contains only odd integers, then $n$ has to be
even.
(It is easy to see that, apart from this parity restriction, all large
$n$ are allowed.)

\begin{remark}
The choice $p=\frac12$ gives a random \sgraph{} that is uniformly
distributed over all \sgraph{s}
on $n$ labelled vertices.
However, we will in this paper instead study the case when $p$ is of order
$1/n$ and the average vertex degree  is bounded
(for $\gnp$, and as we shall see
later, for $\gnps$ too).
\end{remark}

It follows immediately from \eqref{gnps} that two \ssubgraph{s} of
$K_n$ with the same degree sequence are attained with the same
probability. Hence, the conditional distribution of $\gnps$ given the
degree sequence is uniform. We will therefore focus on studying the
random degree sequence of $\gnps$; it is then possible to 
obtain further results on the structure of $\gnps$ by
applying
standard results on random graphs with given degree sequences
to $\gnps$ conditioned on the degree sequence.
For example, using the results by \citet{MR95,MR98} we obtain
\refT{Tgiant} below on existence of a giant component in $\gnps$. 

\section{Main theorem}\label{S1.5}

By symmetry, the labelling of the vertices and thus the order of the
degree sequence is not important, and we shall therefore 
study the numbers of vertices with given
degrees, rather than
the degree sequence itself.
We introduce some notation.

Let $\cN$ be the set of sequences $\nn=(n_0,n_1,\dots)$ of non-negative
integers $n_j\in\bbZo$
with only a finite number of non-zero terms $n_j$.
Let 
\begin{equation*}
 \cNS\=\set{\nn\in\cN:n_j=0 \text{ when } j\notin\cS} ,
\end{equation*}
the set of such sequences supported on $\cS$.
For a (multi)graph $G$, let $n_j(G)$ be the number of vertices of
degree $j$ in $G$, $j\in\bbZo$, and let
$\nn(G)\=(n_j(G))_{j=0}^\infty$ be the sequence of degree counts.
Thus, $G$ is an \sgraph{} if and only if $\nn(G)\in\cNS$.
Clearly, \cf{} \eqref{gnps},
\begin{equation}
  \P(\gnp \text{ is an \sgraph})
= \sum_{G:\,\nn(G)\in\cNS} 
p^{\edg(G)}(1-p)^{\binom n2-\edg(G)}.
\label{partfn}
\end{equation}
We shall call this summation the \emph{partition function}, 
denoted as $\znps$.

Let $\cP$ be the set of probability distributions on $\bbZo$. In other words,
$\cP$ is the set of
sequences $\pix=(\pi_0,\pi_1,\dots)$ of non-negative real numbers
such that $\sum_j\pi_j=1$.
We regard $\cP$ as a topological space with the usual topology of weak
convergence (denoted $\pto$); 
it is well known that this topology on $\cP$ may be metrised by the total
variation distance 
\begin{equation*}
\dtv(\pix,\pix')\=\half\sum_j|\pi_j-\pi'_j|.  
\end{equation*}
If $G$ has $n=\sum_jn_j(G)$ vertices, let
$\pi_j(G)\=n_j(G)/n$, the proportion of vertices of degree $j$, and
$\pix(G)\=\nn(G)/n=(\pi_j(G))_{j=0}^\infty$.
Note that $\pix(G)$ is the probability distribution of the degree
of a randomly chosen vertex in $G$.

Let $\phis$ be the exponential generating function of $\cS$,
\begin{equation}\label{phis}
  \phis(\mu)\=\sumjs \frac{\mu^ j}{j!}.
\end{equation}
Note that this is an entire function of $\mu$, and that $\phis(\mu)>0$
for $\mu>0$ while $\phis(0)>0$ if and only if $0\in\cS$.

Let $\PoS(\mu)$ be the distribution of a $\Po(\mu)$ distributed
variable given that it belongs to $\cS$, \ie, $\cL(\xmu\mid \xmu\in\cS)$
with $\xmu\sim\Po(\mu)$. Thus, recalling \eqref{phis},
\begin{equation}\label{pos}
  \PoS(\mu)\set{k}=\frac{\mu^k/k!}{\phis(\mu)},
\qquad k\in\cS.
\end{equation}
This conditional distribution is always defined for $\mu>0$, and in
the case $0\in\cS$ for $\mu=0$ too (in which case it is
a point mass at 0).
The mean of the $\PoS(\mu)$ distribution is
\begin{equation}
  \label{epos}
\E(\xmu\mid \xmu\in\cS)
=
\frac1{\phis(\mu)}
\sumks \frac{k\mu^k}{k!} 
=\frac{\mu\phis'(\mu)}{\phis(\mu)}.
\end{equation}

Let $\gl > 0$. We shall refer to the equation
\begin{equation}\label{chareq}
\frac{\mu\phis'(\mu)}{\phis(\mu)}=\frac{\mu^ 2}{\gl},
\end{equation}
as the \emph{characteristic equation} of the set $\cS$
(for this value of $\gl$),
and we write
\begin{equation}\label{egl}
\egl
\=
\Bigset{ \mu \ge 0: \frac{\mu\phis'(\mu)}{\phis(\mu)} = \frac{\mu^ 2}{\gl} },
\end{equation}
where we allow $\mu=0$ only if\/ $0\in\cS$.
We further define the auxiliary function 
\begin{equation}\label{psidef}
  \psis(\mu)
\=
\log\phis(\mu)-\frac{\mu\phis'(\mu)}{2\phis(\mu)}
\end{equation}
and note that when $\mu\in\egl$, $\psis(\mu)$ equals the simpler function
\begin{equation}\label{psixdef}
  \psisx(\mu;\gl)
\=
\log\phis(\mu)-\frac{\mu^2}{2\gl}.
%\qquad  \mu\in\egl.
\end{equation}

All logarithms in this paper are natural.
We let 
$c_1,C_1,\dots$ denote positive constants, generally depending on
$\cS$ 
and
$\gl$
(or $(\gl_n)$)
and sometimes on other parameters too (but not on $n$),
which may be indicated by arguments. We sometimes assume that $n>1$ to
avoid trivialities.

\begin{theorem}\label{T1}
Let $\gl_n\to\gl>0$ and suppose that $\egl$ contains a unique $\mux=\mux(\gl)$
that maximizes 
$\psis(\mu)$ (or, equivalently, $\psisx(\mu;\gl)$)
over \egl.
Then, the following hold, as \ntoo: 
\begin{romenumerate}
\item\label{T1a}
$\pix(\gnlns)\pto\PoS(\mux)$.
In other words, for every $j\in\cS$,
\begin{equation}\label{t1a}
\frac{n_j(\gnlns)}{n}\pto\PoS(\mux)\set{j}
=\frac{\mux^j/j!}{\phis(\mux)}.
\end{equation}

\item\label{T1b}
All moments of the random distribution 
$\pix(\gnlns)$ converge to the corresponding moments of $\PoS(\mux)$.
In other words, if $d_1,\dots,d_n$ is the degree sequence of $\gnlns$,
and $\xmu\sim\Po(\mu)$, then for every $r\in(0,\infty)$, 
\begin{equation}\label{t1b}
\begin{aligned}
\sumin \frac{d_i^r}{n}&=
\sumj \frac{j^r n_j(\gnlns)}{n}
\\
&\pto \sumj j^r\PoS(\mux)\set{j}
=\E(\xmux^r\mid \xmux\in\cS).
\end{aligned}
\end{equation}

In particular, 
\begin{equation}
\frac{e(\gnlns)}{n}
\pto \half\E(\xmux\mid \xmux\in\cS)
=\frac{\mux\phis'(\mux)}{2\phis(\mux)}.
\label{t1bb}
\end{equation}

\item\label{T1c}
The error probabilities in \ref{T1a} decay exponentially:  for every
$\eps>0$, there exists a constant
$\cc=\ccx(\eps,\gl,\cS)>0\ccdef\cctic$ 
such that, for all
large $n$,
\begin{equation}\label{t1c}
  \P\lrpar{\dtv\bigpar{\pix(\gnlns),\PoS(\mux)}\ge\eps}
\le e^{-\ccx n}.\ccdef\cctic
\end{equation}

\item\label{T1d} We have that
\begin{equation}\label{t1d}
\frac 1n\log\znlns=
 \frac1n \log \P(\gnln \text{\rm\ is an \sgraph})
\to \psi_\cS(\mux)-\tfrac12\gl.
\end{equation}
\end{romenumerate}
\end{theorem}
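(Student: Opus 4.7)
My plan is a Laplace/saddle-point analysis of the partition function $\znlns$, from which all four parts will follow. I would begin by expanding
\begin{equation*}
\znlns = \sum_{\nn} N(\nn)\, (\gl_n/n)^{\edg(\nn)}(1 - \gl_n/n)^{\binom n 2 - \edg(\nn)},
\end{equation*}
the sum being over $\nn \in \cNS$ with $\sum_j n_j = n$, where $\edg(\nn) := \tfrac12 \sum_j j n_j$ and $N(\nn)$ is the number of labelled simple graphs with degree multiset $\nn$. Since $\P(\nn(\gnlns) = \nn)$ equals $\znlns\qw$ times this summand, all four conclusions reduce to locating and controlling the dominant term.

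To identify it, I would apply Stirling to $n!/\prod_j n_j!$ together with a McKay--Wormald--style asymptotic
\begin{equation*}
N(\nn) \sim \frac{n!}{\prod_j n_j!} \cdot \frac{(2m)!}{2^m m!\, \prod_j (j!)^{n_j}} \cdot C(\nn),
\end{equation*}
where $m = \edg(\nn)$ and $C(\nn)$ is a bounded correction. The $\log n$-contributions from $(2m)!$ and from $(\gl_n/n)^m$ cancel, so each summand takes the form $\exp\bigpar{n F(\pix;\gl) + o(n)}$ with $\pix = \nn/n$ and
\begin{equation*}
F(\pix;\gl) = \tfrac12 \bar d \log(\bar d \gl) - \tfrac12 \bar d - \sum_j \pi_j \log \pi_j - \sum_j \pi_j \log(j!) - \tfrac12 \gl, \qquad \bar d := \sum_j j \pi_j.
\end{equation*}

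Lagrange multipliers on the set of $\pix \in \cP$ supported in $\cS$ force $\pi_j \propto \mu^j/j!$ with $\mu := \sqrt{\bar d \gl}$, so the optimal $\pix$ equals $\PoS(\mu)$; self-consistency $\bar d = \mu \phis'(\mu)/\phis(\mu) = \mu^2/\gl$ is exactly \eqref{chareq}, placing $\mu \in \egl$. A direct computation using \eqref{psidef} yields $F(\PoS(\mu);\gl) = \psis(\mu) - \tfrac12 \gl$ for $\mu \in \egl$. Under the uniqueness hypothesis $\mux$ is the unique global maximiser, so a standard Laplace bound --- polynomial count of admissible $\nn$ and strict separation of $F$ from its supremum outside any $\eps$-neighbourhood of $\PoS(\mux)$ --- gives both \eqref{t1d} and the exponential concentration \eqref{t1c}, and \eqref{t1a} is an immediate consequence of \eqref{t1c}. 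For part (ii), moment convergence follows from (i) combined with a uniform-integrability argument: \eqref{t1c}, together with standard Poisson tail estimates on the maximum degree of $\gnln$, shows that the contribution to $\sum_j j^r n_j/n$ from degrees exceeding a slowly growing threshold is exponentially small.

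The main obstacle is making the $N(\nn)$ asymptotic uniform across the relevant range of $\nn$, since such formulas are proved only under moment and maximum-degree conditions on $\nn$. A preliminary step is therefore needed to show that degree sequences violating these conditions contribute only exponentially negligibly to $\znlns$; I would handle this by a rough union bound in the unconditioned $\gnln$, using Poisson tail estimates to confine the summation to $\nn$'s with $\max\set{j : n_j > 0} = O(\log n)$, say, where the sharp asymptotic is available with well-controlled error.
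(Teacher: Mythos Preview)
Your overall plan --- Laplace analysis of the partition function, optimisation over degree profiles $\pix$, identification of the optimiser as $\PoS(\mux)$ via the characteristic equation --- matches the paper's heuristic (end of \S2) exactly, and your computation of $F(\pix;\gl)$ and $F(\PoS(\mu);\gl)=\psis(\mu)-\tfrac12\gl$ is correct.

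The paper's rigorous proof, however, takes a different technical route that neatly avoids the obstacle you identify. Rather than invoking McKay--Wormald asymptotics for $N(\nn)$, the paper first proves the analogous result (Theorem~\ref{T1x}) for a random \emph{multigraph} $\gxnlns$, where the contribution from a fixed degree profile has an \emph{exact} closed form via the configuration model, namely \eqref{a8}. This makes the Laplace analysis clean: no asymptotic enumeration formula is needed, and the mode $\nnz$ can be studied by direct comparison of adjacent $\nn$'s (inequalities \eqref{a9}--\eqref{a12}). Concentration around the mode is then obtained by a local-move argument (Lemmas~\ref{L3b} and~\ref{L3}): if $\nn$ is far from $\PoS(\mux)$, some single- or double-swap strictly increases $z(\nn)$ by a factor $1+c$, and iterating gives exponential decay. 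Finally, the simple-graph statement is recovered by observing that $\gnlns$ is $\gxnlns$ conditioned on simplicity (Lemma~\ref{Lsimple}), and that $\P(\gxnlns\text{ is simple})$ is bounded away from zero (Lemma~\ref{L1xy}).

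Your direct approach is viable in principle, but note two points. First, the count of admissible $\nn$ is not polynomial when $\cS$ is infinite; the paper shows it is $e^{o(n)}$ (Lemma~\ref{LA}) after first restricting to $N\le Bn$ via a crude Chernoff bound (Lemma~\ref{L2}), and this restriction requires an a priori lower bound $\zxnls\ge c^n$ obtained from regular graphs (Lemma~\ref{L1}). Second, making the McKay--Wormald correction $C(\nn)$ uniformly controlled over the relevant range --- and in particular proving that degree sequences outside its range of validity are negligible \emph{in the conditioned model} --- would essentially reproduce the multigraph argument anyway, since the leading term in McKay--Wormald is precisely the configuration-model count. The paper's detour through multigraphs is thus not merely a matter of taste: it replaces an asymptotic formula with an identity and pushes the simple-graph issue into a single probability estimate at the end.
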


More generally, let \eglo{} be the subset of \egl{} where $\psis(\mu)$
(or $\psisx(\mu;\gl)$) is maximal:
\begin{equation}
  \eglo\=\left\{\mu\in\egl:\psis(\mu)=\max_{\mui\in\egl}\psis(\mui)\right\}.
\end{equation}
If \eglo{} contains a single element, we thus take that element as
$\mux(\gl)$; in particular, if $|\egl|=1$, then
$\egl=\eglo=\set{\mux(\gl)}$.
We shall see in \refS{Smu}
that this is the normal case: $\egl$ is
always finite and non-empty, and $|\eglo|=1$ except for at most a
countable number of values of $\gl$. 

Further results on $\mux$ and the auxiliary functions are given in \refS{Smu}.

\begin{remark}\label{Rphase}
The set $\eglo$ may contain more than one element, see
\refE{E03}. 
In this case, the theorem
may not be applied, but 
the proof in Sections \ref{SpfT1x}--\ref{SpfT1} extends to show that
the random degree distribution $\pi(\gnlns)$ approaches the finite set
$F:=\set{\PoS(\mu):\mu\in\eglo}$ in the sense that the analogue of
\eqref{t1c} holds for the distance to this set, \ie, 
\begin{equation}
  \P\Bigpar{\min_{\mu\in\eglo}\dtv\bigpar{\pix(\gnlns),\PoS(\mu)}\ge\eps}
\le e^{-\cctic n}.
\end{equation}

We can regard the distributions in $F$ as pure phases, in analogy with
the situation for many infinite systems of interest in statistical
physics, but for the finite systems considered here this has to be
interpreted asymptotically. Thus, for large $n$, the degree
distribution of $\gnlns$ is approximately given by one of the pure
phases, but we do not know which one. It follows that if we let
\ntoo, one of the following happens for the random degree distribution
$\pi=\pi(\gnlns)$ (regarded as an element of $\cP$): 
\begin{romenumerate}
  \item
$\pi$ converges in probability to $\PoS(\mu)$ for
some $\mu\in\eglo$.
  \item
$\pi$ converges in distribution to some non-degenerate distribution on
$F$. (A mixture of two or more pure phases.)
\item
There are oscillations and $\pi$ does not converge in distribution;
suitable subsequences converge as in (i) or (ii), but different
subsequences may have different limits.
\end{romenumerate}
It is easy to show by a continuity argument that all three cases
may occur in \refE{E03} for suitable sequences $(\gl_n)$ (with
$\gl_n\to\glo$ defined there). We do not know whether all three cases
may occur for fixed $\gl$.

We shall not investigate the case $|\eglo|>1$ further here.
\end{remark}

\begin{remark}
The reason for taking $n$ large in \eqref{t1c} is as follows.
Suppose, for example, that $\cS$ is
an infinite set.
Then the \lhs{} of \eqref{t1c} trivially equals 1 for any fixed
$n$ and sufficiently small $\eps$. One way around this, at least when
$\gl_n\equiv\gl$,
is to replace the \rhs{} of \eqref{t1c} by $2e^{-\cc n}$ for suitable $c_2>0$.
\end{remark}

We close this section with an informal 
explanation of the results of Theorem \ref{T1}, 
several applications of which are presented in Section \ref{Sex}.
Recall the partition function $\znls$ of \eqref{partfn}, considered
as a summation over suitable graphs.
We wish to establish which graphs are dominant in this summation. 
In so doing, we will treat certain discrete variables as
continuous, and shall study maxima by differentiation
and Lagrange multipliers.
Let
$\pix=(\pi_0,\pi_1,\dots)$ be a sequence of non-negative reals
satisfying $\sum_i\pi_i=1$,
and $\pi_i = 0$ for $i\notin \cS$. We write 
$$
\nu=\nu(\pix)=\sum_i i\pi_i.
$$
Let $Z(\pix)$ represent the contribution to the summation of
\eqref{partfn} from graphs $G$ having, for each $i$, approximately $n\pi_i$
vertices with degree $i$.  The (empirical) mean vertex-degree of such a graph
is $\nu$. 

Now, $Z(\pix)$ is a summation over simple graphs subject to
constraints on the vertex degrees. It may be approximated by a similar
summation $Z'(\pix)$ over certain multigraphs, 
and this is easier to express in closed form,
as follows. The number of ways of partitioning $n$ vertices into sets
$V_0,V_1,\dots$ of respective sizes $n\pi_i$, $i\ge 0$, is 
$$
\frac {n!}{(n\pi_0)!(n\pi_1)!\cdots}.
$$
Each vertex $v\in V_i$ will be taken to have degree $i$, and we therefore provide
$v$ with $i$ `half-edges'.  Each such half-edge will be connected to
some other half-edge to make a whole edge. 
Since half-edges are considered indistinguishable, 
we shall require the multiplicative factor
$$
\left\{\prod_{i\in \cS} (i!)^{-n\pi_i}\right\}.
$$
The total number of half-edges is 
$2N = n \sum_i i\pi_i =n\nu$, and we assume for simplicity
that $N$ is an integer. These half-edges may be
paired together in any of
$$
(2N-1)!! = (2N-1)(2N-3) \cdots 3\cdot 1 = \frac{(2N)!}{2^{N}N!}
$$
ways, and each such pairing contributes 
$$
\left(\frac\gl n\right)^{N} \left(1-\frac \gl n\right)^{\binom n2 - N}
$$
to $Z'(\pix)$.
We combine the above to obtain an approximation to $Z(\pix)$:
$$
Z(\pix) \approx \frac {n!}{(n\pi_0)!(n\pi_1)!\cdots}
\left\{\prod_{i\in \cS} (i!)^{-n\pi_i}\right\}
\frac{(2N)!}{2^{N}(N)!} \left(\frac\gl n\right)^{N} 
\left(1-\frac \gl n\right)^{\binom n2 - N}.
$$

By Stirling's formula, as $n\to\infty$,
\begin{equation}
\frac 1n \log Z(\pix) \to \tfrac12 \nu \log(\gl\nu/e)- \tfrac12 \gl
-\sum_{i\in\cS} \pi_i \log(i!\,\pi_i).
\label{expasympt}
\end{equation}
We maximize the last expression subject to
$\sum_i \pi_i = 1$ to find that
\begin{equation}
\pi_i = A \frac{\mu^i}{i!},\qquad i\in\cS,
\label{pimu}
\end{equation}
for some constant $A$ and some $\mu$ satisfying 
\begin{equation}
\mu=\sqrt{\gl \nu}.
\label{chareq2}
\end{equation}
Thus $\pix$ is the mass function of the
$\PoS(\mu)$ distribution and, by \eqref{epos} and the definition of $\nu$,
\begin{equation}
\nu = \frac{\mu\phis'(\mu)}{\phis(\mu)}.
\label{constr2}
\end{equation}
We combine this with \eqref{chareq2} to obtain the `characteristic equation' 
\eqref{chareq}.

If there exists a unique $\mu$ satisfying the characteristic equation,
then we are done. If there is more than one, we pick the value that
maximizes the right hand side of \eqref{expasympt}.
That is to say, the exponential asymptotics of $\znls$ are dominated by
the contributions from graphs with $\pix$ satisfying \eqref{pimu}
with $\mu$ chosen to satisfy the characteristic equation and
to maximize $\psi_\cS(\mu)$.

Note from \eqref{expasympt} that
\begin{equation}
\frac 1n \log Z(\pix) \to \psi_\cS(\mu) - \tfrac12 \gl,
\label{expasympt2}
\end{equation}
and part (iv) of Theorem \eqref{T1} is explained.

We make the above argument rigorous in the forthcoming proof of 
Sections \ref{SpfT1x}--\ref{SpfT1}, a substantial
part of which is devoted to proving that the conditional
distribution of vertex degrees is concentrated near its mode.

\section{The giant and the core}\label{giantcore}

We show next how to apply \refT{T1}, in conjunction
with results of \citet{MR95,MR98} and \citet{JL,SJ204}, to
identify the sizes of the giant cluster and the $k$-core
of $\gnlns$. The proofs are deferred to Section \ref{sec4.5}.

We consider first the existence or not of a giant component
in the random $\cS$-graph $\gnlns$ as $\gl_n\to\gl > 0$. Let
$\pix=(\pi_0,\pi_1,\dots)$ be a vector of non-negative reals with sum 1,
and write $\nu=\sum_j j\pi_j$.
As explained
in \cite{MR95,MR98}, 
if we consider the random graph with 
given degree sequence $\dd=\dn$, and assume that 
there are $n(1+\o(1))\pi_j$
vertices with degree $j$, the quantity that is key to
the existence of a giant component is 
$$
Q(\pix)\=\sum_j j(j-2)\pi_j.
$$
Subject to certain conditions, if $Q(\pix)>0$,
there exists a giant component, while
there is no giant component when $Q(\pix)\le0$. 

We shall apply this
with $\pi_j=\PoS(\mux){\set{j}}$, and to that end we introduce some
further notation. Let, see \eqref{epos},
\begin{align}
\nu(\mu)&\= \sum_j j\PoS(\mu){\set{j}}
=\frac{\mu\phis'(\mu)}{\phis(\mu)},
\label{nudef}
\\
Q(\mu) &\= \sum_j j(j-2)\PoS(\mu){\set{j}} =
\frac{\mu^2\phis''(\mu)-\mu\phis'(\mu)}{\phis(\mu)},
\label{Qdef}
\\  
\chi_\mu(\xi) &\= 
\sum_j j\PoS(\mu){\set{j}}(\xi^2-\xi^j)
=
\nu(\mu)\xi^2  - \mu\xi \frac{\phis'\bigl(\mu\xi\bigr)}{\phis(\mu)}.
\label{chidef}
\end{align}
Note that $\chi_\mu(0)=\chi_\mu(1)=0$.
Furthermore, the only possibly negative term in the sums in \eqref{Qdef}
and \eqref{chidef} for $\xi\in(0,1)$ are
those with $j=1$, while the terms with $j=0$ and $j=2$ always
vanish and the others are positive unless $\PoS(\mu)=0$.

Let $\Gnps$ be 
the component
of $\gnps$ with the largest number of vertices, and let
$\GGnps$ be the second largest. (Break ties by any rule.)
We write $\ver(H)$ for the number of vertices 
in a graph $H$.

\begin{theorem}
  \label{Tgiant}
Suppose that $\cS\not\subseteq\set{0,2}$.
Let $\gl_n\to\gl>0$ and suppose that $\eglo$ contains a unique element
$\mux\ge0$. 
Then, $\gnlns$ has a giant component if and only if $Q(\mux)>0$, \ie,
if and only if $\mux\phis''(\mux)>\phis'(\mux)$.
More precisely,
as $n\to\infty$,
\begin{align*}
  n^{-1}\ver(\Gnlns) &\pto \gammax\ge0, 
 &
  n^{-1}\edg(\Gnlns) &\pto \zetax\ge0,
\\
  n^{-1}\ver(\GGnlns) &\pto 0, 
 &
  n^{-1}\edg(\GGnlns) &\pto 0,
\end{align*}
where
\begin{align}
  \gammax&=\gammax(\mux)
\=1-\sum_j \xix^j\PoS(\mux)\set{j}
=1-\frac{\phis(\xix\mux)}{\phis(\mux)},
\label{gammax}
\\
  \zetax&=\zetax(\mux)
\=\tfrac12 \nu(\mux) \bigpar{1-\xix^2}
\label{zetax}
\end{align}
with $\xix=\xix(\mux)\in\oi$ given as follows:
\begin{romenumerate}
\item 
if $Q(\mux) >0$ and $1\in\cS$, then 
$\xix\in(0,1)$ is the unique solution to $\chimux(\xix)=0$ with
$0<\xix<1$, and $\gammax,\zetax>0$;
\item
if $Q(\mux) >0$ and $1\notin\cS$, then 
$\xix=0$ and $\gammax=1-1/\phis(\mux)>0$, $\zetax=\frac12 \nu(\mux)>0$;
\item 
if $Q(\mux)\le0$, then $\xix=1$ and $\gammax=\zetax=0$.
\end{romenumerate}
\end{theorem}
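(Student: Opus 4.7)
The plan is to exploit the fact, noted after \eqref{gnps}, that conditional on its degree sequence $\dd$ the graph $\gnlns$ is uniform over simple graphs with that sequence, so that results for random graphs with a prescribed degree sequence (\cite{MR95,MR98,JL,SJ204}) transfer to $\gnlns$ provided we control the random sequence $\dd$. That control is supplied by \refT{T1}: part \ref{T1a} gives $\pix(\gnlns)\pto\PoS(\mux)$, and part \ref{T1b} gives convergence in probability of $\sum_j j^r n_j/n$ to $\sum_j j^r\PoS(\mux)\set{j}$ for every $r>0$. In particular the second moment of the degree distribution converges, which is the uniform-integrability hypothesis required by Molloy--Reed and Janson--\L{}uczak.

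Next I would apply those theorems conditionally on $\dd$. Writing $\pi=\pix(\dd)$, they assert that the random graph with degree sequence $\dd$ has a unique giant component of order $\gammax(\pi)\,n+o_p(n)$ vertices and $\zetax(\pi)\,n+o_p(n)$ edges when $Q(\pi)>0$, while every component has $o_p(n)$ vertices and edges when $Q(\pi)\le 0$; the second-largest component has $o_p(n)$ vertices in either case. The exclusion $\cS\not\subseteq\set{0,2}$ rules out the degenerate setting where these theorems fail (essentially, components being only cycles and isolated vertices). Since $\pi\pto\PoS(\mux)$ with convergent second moments, and $Q$, $\gammax$, $\zetax$ are continuous in this mode of convergence, the asymptotics transfer to $\gnlns$ with $\mux$ in place of $\pi$.

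It then remains to identify $\xix$. In the configuration-model picture, $\xix$ is the smallest solution in $\oi$ of the branching-process fixed-point equation $\xi\,\nu(\mux)=\mux\phis'(\mux\xi)/\phis(\mux)$, which rearranges (for $\xi\ne 0$) precisely to $\chimux(\xi)=0$. When $Q(\mux)>0$ and $1\in\cS$, convexity of the size-biased progeny generating function yields a unique root in $(0,1)$. When $Q(\mux)>0$ and $1\notin\cS$, the size-biased offspring distribution is supported on $\set{1,2,\dots}$, so the branching process never dies out; hence $\xix=0$, every vertex of positive degree lies asymptotically in the giant, and $\gammax=1-\PoS(\mux)\set{0}=1-1/\phis(\mux)$. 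When $Q(\mux)\le 0$ the branching process is (sub)critical, $\xix=1$, and $\gammax=\zetax=0$. The formula $\zetax=\half\nu(\mux)(1-\xix^2)$ finally arises from the standard expression $\zetax=\half\sum_j jp_j(1-\xix^j)$ combined with the identity $\sum_j jp_j\xix^j=\nu(\mux)\xix^2$ that is exactly $\chimux(\xix)=0$ (and holds trivially at $\xix=1$).

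The main obstacle is the passage from conditional to unconditional asymptotics for a random degree sequence: one needs the Molloy--Reed / Janson--\L{}uczak hypotheses to hold with probability tending to $1$, and continuity of the functionals $\gammax$ and $\zetax$ on degree distributions with convergent second moments. If a quantitative version is needed one can invoke the exponential concentration in \refT{T1}\ref{T1c}, but mere convergence in probability of moments, as in \refT{T1}\ref{T1b}, already suffices for the qualitative statements claimed.
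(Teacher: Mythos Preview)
Your proposal is correct and follows essentially the same route as the paper: condition on the degree sequence, use \refT{T1} to show the empirical degree distribution and its second moment converge to those of $\PoS(\mux)$, and then invoke the Molloy--Reed/Janson--\L{}uczak theory for random graphs with given degrees. The one point the paper handles more explicitly is the passage from the limit theorem for deterministic degree sequences to a random one: rather than appealing to continuity of $\gammax,\zetax$, the paper reformulates the Molloy--Reed result in a finitised (non-asymptotic) form --- for every $\eps>0$ and $C$ there exists $\delta>0$ such that the conclusions hold to accuracy $\eps$ whenever the degree sequence is $\delta$-close to $(p_k)$ in total variation and has $\sum d_i^2\le Cn$ --- and then observes via \refT{T1} that $\dd(\gnlns)$ satisfies these hypotheses with probability $\ge 1-\eps$.
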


\begin{remark}\label{Rgiant0}
If $\mux=0$, then $Q(\mux)=0$ and we are in Case (iii) with no giant
component; in fact, by \refT{T1}, $n_0/n\pto1$ so almost all vertices
are isolated. In this case $\chimux(\xi)=0$ for all $\xi$.

If $\mux>0$, then $\chimux<0$ on $(0,\xix)$  and $\chimux>0$ on
$(\xix,1)$ in all three cases, 
as follows from \cite[Lemma 5.5]{SJ204}, 
which yields another characterization of $\xix$.
\end{remark}

\begin{remark}
  If $1\notin\cS$, then $Q(\mux)>0$ as soon as $\mux>0$. Hence we are
  in Case (ii) if $\mux>0$ and in Case (iii) in $\mux=0$.
\end{remark}

\begin{remark}
  We have excluded the cases $\cS\subseteq\set{0,2}$, \ie, the trivial
  case $\cS=\set0$ and the cases $\set2$ and \set{0,2} that are
  exceptional; in the latter cases 
$n^{-1}\ver(\Gnlns)$ has a continuous limiting distribution and thus
not a constant limit when $\mux>0$, see Examples \refand{Es}{E02}.
We note also that $Q(\mu)=0$ for all $\mu$ in the excluded cases.
\end{remark}

\begin{remark}
  \label{RGW}
It is easily seen, using \eqref{chidef}, that $\xix$ equals the
extinction probability of a Galton--Watson process with offspring
distribution
\begin{align*}   
\P(X=j-1)
%=
%\frac{j\pi_j}{\nu}
&=
\frac{j\PoS(\mux)\set j}{\nu}\\
&=\frac{\mux^j}{(j-1)!\,\phis(\mux)}\parfrac{\mux\phis'(\mux)}{\phis(\mux)}\qw,
\qquad j\in \cS\setminus\{0\},
\end{align*}  
that is, the distribution $\PoSI(\mux)$ where
$\SI\=\set{k\ge0:k+1\in\cS}$. (Note that $\phi_\SI(\mu)=\phis'(\mu)$.)
Hence $\gammax$, the asymptotic relative size of $\Gnlns$, equals by
\eqref{gammax} the survival probability of a Galton--Watson process
with offspring distribution $\PoSI(\mux)$ and initial distribution
$\PoS(\mux)$. 
\end{remark}

The $k$-core of a graph $G$ is the largest induced
subgraph having mimimum vertex degree at least $k$.
The $k$-core of an Erd\H os--Re\'nyi random
graph has attracted much attention; see \cite{JL} and the references therein.
\refT{T1} may be applied in conjunction with Theorem 2.4 of \citet{JL}
to obtain the asymptotics of the $k$-core of $\gnlns$. 
Let $\knpsk$ denote the $k$-core of $\gnps$. We shall require some further
notation in order to state our results for $\knlnsk$.

Let $k\in\{2,3,\dots\}$.
Let $\mu\ge 0$, and let $\wmu$ be a random variable with the $\PoS(\mu)$
distribution.  
For $r\in[0,1]$,
let $\wmur$ be obtained by `thinning' $\wmu$ at rate $1-r$ so that,
conditional on $\wmu$, 
$\wmur$ has the binomial distribution $\Bin(\wmu,r)$. For
$k\in\{2,3,\dots\}$, let 
\begin{align*}
h_{\mu,k}(r) 
&= \E(\wmur I_{\{\wmur\ge k\}}) 
= \sum_{l=k}^\infty l \P(\wmur=l),\\
\ol h_{\mu,k}(r) &= \P(\wmur \ge k).
\end{align*}

\begin{theorem}
  \label{Tcore}
Let $\gl_n\to\gl>0$ and suppose that $\eglo$ contains a unique element
$\mux$. Let $k \ge 2$, and  
let, with $\nu=\nu(\mux)$ as above,
$$
\rx=\sup\{r\le 1: \nu r^2 = h_{\mux,k}(r)\}.
$$
As $n\to\infty$:
\begin{romenumerate}
\item
if $\rx=0$, 
\begin{equation*}
\frac 1n \ver(\knlnsk) \pto 0,\qquad
\frac 1n \edg(\knlnsk) \pto 0;
\end{equation*}
if, further, $k \ge 3$, then 
$$
\P(\knlnsk \text{\rm\ is empty})\to 1;
$$

\item
if $\rx>0$, and in addition $\nu r^2 < h_{\mux,k}(r)$ on some
non-empty interval 
$(\rx-\eps,\rx)$, then 
\begin{align*}
\frac 1n \ver(\knlnsk) &\pto \ol h_{\mux,k}(\rx),\\
\frac 1n \edg(\knlnsk) &\pto \frac12 h_{\mux,k}(\rx)= \frac12 \nu \rx^2.
\end{align*}
\end{romenumerate}
\end{theorem}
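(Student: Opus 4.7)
The plan is to reduce the problem to the already-known asymptotics for the $k$-core of a random graph with a \emph{given} degree sequence, and then to feed in the distributional information supplied by \refT{T1}. Observe first that, by \eqref{gnps} and the symmetry of $\gnps$ under relabelling of vertices, the conditional distribution of $\gnlns$ given its degree sequence $\dd=(d_1,\dots,d_n)$ is uniform over all simple graphs on $[n]$ with that sequence; in the notation of the paper this is just $\Gnd$. Consequently $\knlnsk$ is, conditionally on $\dd$, distributed as the $k$-core of $\Gnd$, so everything reduces to analysing the $k$-core of a uniform random simple graph with the realised (random) degree sequence.

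Next I would invoke \refT{T1}\ref{T1a} and \ref{T1b}: $\pix(\gnlns)\pto\PoS(\mux)$, and every empirical moment converges in probability to the corresponding moment of $\PoS(\mux)$; in particular $\sum_j j^2 n_j(\gnlns)/n\pto\sum_j j^2\PoS(\mux)\set j<\infty$, which supplies the uniform integrability of squared degrees required by the $k$-core theorem. Theorem~2.4 of \citet{JL} determines, for a deterministic degree sequence whose empirical distribution converges (with finite second moment) to a law $\pix_\infty$, the asymptotic vertex- and edge-counts of the $k$-core in terms of the quantity $\rx=\sup\set{r\le 1:\nu r^2=h_{\pix_\infty,k}(r)}$ built from the limiting law. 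Taking $\pix_\infty=\PoS(\mux)$ recovers precisely the $\rx$ of the present statement, and the dichotomy in that theorem ($\rx=0$, with empty core when $k\ge 3$; and $\rx>0$ under transversal crossing on some interval $(\rx-\eps,\rx)$) matches parts~(i) and~(ii) here.

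The hard part will be the passage from the deterministic degree-sequence setting of \cite{JL} to our random degree sequence. I would handle this by a subsequence argument: since \refT{T1} furnishes both $\pix(\gnlns)\pto\PoS(\mux)$ and the convergence in probability of the second moment (in fact \refT{T1}\ref{T1c} gives exponential concentration for the empirical distribution, so one can even appeal to Borel--Cantelli), every subsequence admits a further subsequence along which both convergences hold almost surely. Along such a subsequence the realised degree sequence satisfies the deterministic hypotheses of \cite[Thm.~2.4]{JL}, so the $k$-core statistics converge \as{} to the limits stated in the present theorem, which depend only on $\PoS(\mux)$ and $\rx$. Convergence in probability for the original sequence then follows by the standard subsequence principle, yielding parts~(i) and~(ii).
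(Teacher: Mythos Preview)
Your overall architecture matches the paper's: condition on the degree sequence, observe that $\gnlns$ given $\dd$ is uniform, and then apply \cite[Theorem~2.4]{JL} for the $k$-core of a random graph with prescribed degrees. The transfer from the deterministic-sequence statement to the random degree sequence via subsequences and Borel--Cantelli is fine (the paper instead reformulates the deterministic result in a ``finite $\eps$--$\gd$'' form, as in \refT{TMRfinit}, but that is a matter of taste).

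There is, however, a genuine gap. You verify only that $\sum_i d_i^2/n$ converges, and assert that this ``supplies the uniform integrability of squared degrees required by the $k$-core theorem''. But \cite[Theorem~2.4]{JL} does \emph{not} work under a second-moment hypothesis: it requires the stronger condition that $\sum_{i=1}^n e^{c d_i}\le Cn$ for some $c>0$. (This is exactly the point the paper flags when it says the proof of \refT{Tcore} needs condition~(ii$'$), which is strictly stronger than the condition~(ii) used for the giant component.) Your appeal to \refT{T1}\ref{T1b} gives control of polynomial moments only, so the hypothesis you actually need is not established by your argument.

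The missing ingredient is \refL{LN}, which gives $\E n_k(\gxnlns)\le Cn e^{-ck}$ uniformly in $k$; summing $e^{ck'}\E n_k$ for $c'<c$ yields $\E\sum_i e^{c' d_i}=O(n)$ (this is \refL{Lexp}), and then Markov's inequality shows the exponential-moment condition holds with probability $>1-\eps$ for suitable constants. Transferring from $\gxnlns$ to $\gnlns$ uses Lemmas~\ref{Lsimple} and~\ref{L1xy}. Once you insert this step, your subsequence argument goes through.
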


\begin{remark}\label{RcoreGW}
  Let $\cX_\mu$ be the Galton--Watson process with offspring
  distribution $\PoSI(\mu)$, started with a single individual $o$,
and let $\cXX_\mu$ be the modified process where the first
generation has distribution $\PoS(\mu)$, \cf{} \refR{RGW}.
It may be seen that
$\rx$ is
the probability that the family tree of $\cX_\mu$ contains an
infinite subtree with root $o$ and every node having $k-1$ children.
Similarly,  $\ol h_{\mux,k}(\rx)$ equals the probability
that $\cXX_\mu$ contains an infinite $k$-regular subtree
with root $o$ (the root has $k$ children and all other vertices have $k-1$).
It is easy to see heuristically that this yields the asymptotic
probability that a random vertex belongs to the $k$-core, 
see \citet{psw96} (for $\cS=\bbZo$), 
but it is
difficult to make a proof based on branching process theory; see
\citet{Riordan:kcore} where this is done rigorously 
for another random graph model.
\end{remark}

\section{Roots of the characteristic equation}\label{Smu}

To avoid some trivial complications, we assume throughout this section
that $\cS\neq\set0$, thus excluding the trivial case $\cS=\set0$ for which
$\gnlns$ comprises isolated vertices only.

\begin{lemma}\label{Lmu}
$\phis'(\mu)\le\CC\phis(\mu)$ for all $\mu\ge1$.
\CCdef\CClmu
\end{lemma}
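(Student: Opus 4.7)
The plan is to split the defining series $\phis'(\mu) = \sum_{j\in\cS,\,j\ge1}\mu^{j-1}/(j-1)!$ at a threshold $j_0 = \ceil{e\mu}$ and bound the head and tail separately in terms of $\phis(\mu)$. The key identity is $\mu^{j-1}/(j-1)! = (j/\mu)\cdot\mu^j/j!$, which writes each term of $\phis'$ as a factor $j/\mu$ times the corresponding term of $\phis$.

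For the head $1\le j\le j_0$, the factor $j/\mu$ is at most $e+1/\mu\le e+1$ when $\mu\ge1$, so, summing over $j\in\cS$ in this range, this piece is at most $(e+1)\phis(\mu)$.

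For the tail $j>j_0$, I would overcount by dropping the constraint $j\in\cS$, reducing to $\sum_{k\ge j_0}\mu^k/k! = e^\mu\,\P(\Po(\mu)\ge j_0)$; the standard Chernoff bound for the Poisson distribution, $\P(\Po(\mu)\ge e\mu)\le e^{-\mu}$, then gives a bound of $1$. To absorb this constant into a multiple of $\phis(\mu)$, I would use the trivial lower bound $\phis(\mu)\ge \mu^{j_*}/(j_*)!\ge 1/(j_*)!$, valid for $\mu\ge1$, where $j_*=\min\cS$ (a finite integer, since in this section we assume $\cS$ is nonempty and $\cS\ne\set0$).

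Combining these three estimates gives $\phis'(\mu)\le\bigl(e+1+(j_*)!\bigr)\phis(\mu)$ for $\mu\ge1$, yielding the lemma with a constant depending only on $\cS$. I do not foresee any substantive obstacle: the choice of cutoff $j_0\approx e\mu$ is essentially forced, as it is exactly the threshold at which the Poisson upper-tail factor $e^{-\mu}$ just cancels the factor $e^\mu$ arising from extending $\sum \mu^k/k!$ to all $k\ge j_0$. The mild subtlety is that the bound is not uniform in $\cS$; the constant grows with $(\min\cS)!$, which is unavoidable, since sparse sets with large minimum element can indeed make $\phis(\mu)$ very small relative to $\phis'(\mu)$ for small $\mu$.
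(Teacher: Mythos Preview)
Your proof is correct and follows essentially the same strategy as the paper: split the series for $\phis'(\mu)$ at a threshold proportional to $\mu$, bound the head by a constant times $\phis(\mu)$ via $j/\mu\le\text{const}$, bound the tail by an absolute constant, and absorb that constant using a lower bound on $\phis(\mu)$ valid for $\mu\ge1$. The only cosmetic differences are that the paper cuts at $4\mu$ rather than $e\mu$, bounds the tail term-by-term via Stirling rather than invoking the Poisson Chernoff bound (these are of course equivalent), and uses $\phis(\mu)\ge\phis(1)$ rather than $\phis(\mu)\ge 1/(\min\cS)!$ for the final step.
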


\begin{proof}
  By \eqref{phis},
\begin{equation*}
  \phis'(\mu)=\sumjs j\frac{\mu^{j-1}}{j!}.
\end{equation*}
We split the sum into two parts. For $j\le4\mu$, 
\begin{equation*}
\sum_{j\in\cS,\;j\le4\mu} \frac{j\mu^{j-1}}{j!}
\le
\sumjs \frac{4\mu^{j}}{j!}
=4\phis(\mu).
\end{equation*}
For $j>4\mu$, Stirling's formula implies
\begin{equation*}
 \frac{j\mu^{j-1}}{j!}
\le j\mu^{j-1}\parfrac{e}{j}^j
\le 4^{1-j}e^j
\end{equation*}
and thus, for $\mu\ge1$,
\begin{equation*}
\sum_{j\in\cS,\;j>4\mu} \frac{j\mu^{j-1}}{j!}
\le
\sumji 4 \parfrac e4 ^j
=\CC
=\CC\phis(1)
\le\CCx\phis(\mu).
\qedhere
\end{equation*}
\end{proof}

\begin{theorem}
  \label{Tegl}
For each $\gl>0$, the set $\egl$ is finite and non-empty.
\end{theorem}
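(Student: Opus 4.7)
The approach is to recast the characteristic equation as a zero-finding problem for an entire function, use Lemma~\ref{Lmu} to confine the solution set to a compact interval, then derive non-emptiness from the intermediate value theorem and finiteness from analyticity.

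First I would rewrite \eqref{chareq} in the equivalent form $g(\mu) = 0$, where
\begin{equation*}
   g(\mu) \= \gl\phis'(\mu) - \mu\phis(\mu).
\end{equation*}
For $\mu > 0$ this is equivalent to \eqref{chareq} (multiply by $\phis(\mu)/\mu > 0$); for $\mu = 0$, the characteristic equation is satisfied trivially provided $0 \in \cS$ (so that $\phis(0) = 1$), in which case $0 \in \egl$.

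For non-emptiness, the case $0 \in \cS$ is immediate from the previous paragraph. If $0 \notin \cS$, let $s_0 \= \min\cS \ge 1$, so $\phis(\mu) = \mu^{s_0}/s_0! + O(\mu^{s_0+1})$ and $\phis'(\mu) = \mu^{s_0-1}/(s_0-1)! + O(\mu^{s_0})$ as $\mu \downto 0$. Hence the continuous function
\begin{equation*}
   F(\mu) \= \frac{\mu\phis'(\mu)}{\phis(\mu)} - \frac{\mu^2}{\gl}
\end{equation*}
satisfies $F(\mu) \to s_0 > 0$ as $\mu \downto 0$. On the other hand, by Lemma~\ref{Lmu}, for $\mu \ge 1$ we have $\mu\phis'(\mu)/\phis(\mu) \le \CClmu \mu$, so $F(\mu) \le \CClmu \mu - \mu^2/\gl < 0$ whenever $\mu > \gl \CClmu$. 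The intermediate value theorem then yields a root $\mu \in (0,\gl \CClmu]$ of $F$, and this $\mu$ belongs to $\egl$.

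For finiteness, the same Lemma~\ref{Lmu} estimate shows that every $\mu \in \egl$ with $\mu \ge 1$ satisfies $\mu \le \gl \CClmu$, so $\egl \subset [0, R]$ with $R \= \max(1, \gl \CClmu)$ compact. The function $g$ is entire since $\phis$ is entire, and the same bound gives
\begin{equation*}
   g(\mu) \le (\gl \CClmu - \mu)\phis(\mu) < 0 \q\text{for } \mu > \gl \CClmu,
\end{equation*}
so $g$ is not identically zero. Hence the zero set of $g$ is discrete, and its intersection with the compact interval $[0,R]$ is finite. Combined with the description of $\egl$ via $g$ above, this shows that $\egl$ is finite.

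The only subtle point is the behavior at $\mu = 0$, where the division by $\phis(\mu)$ in \eqref{chareq} must be handled carefully (and the convention that $\mu = 0$ is admitted into $\egl$ only when $0 \in \cS$ must be respected); beyond that the argument is essentially a bounded-range plus IVT plus identity-theorem trichotomy, and I anticipate no substantive obstacle.
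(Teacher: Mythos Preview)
Your proof is correct and follows essentially the same approach as the paper: rewrite the characteristic equation as the vanishing of an entire function, use Lemma~\ref{Lmu} to bound the zeros into a compact interval (and to show the function is not identically zero), then apply the identity theorem for finiteness and the intermediate value theorem for non-emptiness. The only cosmetic difference is that the paper works with $h(\mu)=\mu g(\mu)=\gl\mu\phis'(\mu)-\mu^2\phis(\mu)$ rather than your $g(\mu)=\gl\phis'(\mu)-\mu\phis(\mu)$, which is immaterial.
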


\begin{proof}%[Proof of \refT{Tegl}]
The characteristic equation \eqref{chareq}
may be written as $h(\mu)=0$ where 
$h(\mu) = \gl\mu\phis'(\mu)-\mu^2\phis(\mu)$. 
By \refL{Lmu}, for $\mu\ge1$,
\begin{equation}
h(\mu)\le(\CClmu\gl\mu-\mu^2)\phis(\mu), 
\label{hbnd}
\end{equation}
and thus $h(\mu)<0$ for
$\mu>\CC=\max\{\CClmu\gl,1\}$.

Since $h$ is an entire function, and does not vanish identically by
what we just have shown,
it has only finitely many zeros in
each bounded subset of the complex plane, and in particular in the
interval $[0,\CCx]$. Hence $\egl$ is finite.

To see that $\egl$ is non-empty,
let $s$ be the smallest element of $\cS$. If $s=0$, then $0\in\egl$.
If $s>0$, then $h(\mu)\sim\gl s \mu^s/s!$
as $\mu\to0$, so $h(\mu)>0$ for
small positive $\mu$. Since further $h(\mu)$ is negative for large $\mu$,
$h$ possesses a zero on the positive real axis.
\end{proof}

We have defined $\mux$ as the maximum point of $\psis$ or $\psisx$ 
on $\egl$. 
The next theorem shows that, alternatively, 
it can be defined as the maximum point of
$\psisx$ on $[0,\infty)$ (but not of $\psis$).
Furthermore, instead of $\psisx$, we can use the function 
\begin{equation}\label{psisxx}
\psisxx(\mu;\gl)
\=
\log\phis (\mu)
+ \frac{\mu\phis'(\mu)}{2\phis(\mu)}
\Bigpar{\log \frac{\gl\phis'(\mu)}{\mu\phis(\mu)}-1},
\end{equation}
that arises as follows. In \refS{Smulti},
we will indicate the use of multigraphs in proving \refT{T1},
of which we shall derive a multigraph equivalent at \refT{T1x}.
With $\zxnls$ denoting the multigraph partition function,
we shall see in the proof of \refT{T1x} 
that $\psisxx(\mu;\gl)$ represents the contribution to
$n\qw\log\zxnls$ from multigraphs with degree distribution close to
$\PoS(\mu)$,
see \refR{Rpsi2}.
For this reason, $\psisxx$ is a more natural function than $\psisx$,
although it 
has a more complicated formula.
We shall have to exclude the trivial case when $\cS=\set s$ is a
singleton; in this case
$\psisxx(\mu;\gl)= \frac 12 s\bigpar{\log(\gl s)-1}-\log(s!)$ is constant.

It is easily seen that $\psisxx(\mu;\gl)\ge\psisx(\mu,\gl)$ for all
$\mu$ and $\gl>0$, with equality if and only if $\mu\in\egl$.

We regard $\psisx$ and $\psisxx$ 
as functions of $\mu$, with $\gl$ considered a fixed parameter.
These functions are evidently analytic on $(0,\infty)$.
Note that if $0\in\cS$, then $\phis(0)=1$ and $\psis(\mu)$,
$\psisx(\mu;\gl)$ and $\psisxx(\mu;\gl)$ are continuous at $\mu=0$
with
$\psis(0)=\psisx(0;\gl)=\psisxx(0;\gl)=0$.
On the other hand, if $0\notin\cS$, then $\phis(0)=0$ and
$\psisx(\mu;\gl)\to-\infty$ while a simple calculation yields
$\psisxx(\mu;\gl)\to \frac 12 s\bigpar{\log(\gl s)-1}-\log(s!)$
as $\mu\to0$, where $s = \min\cS$.

\begin{theorem}\label{Tmax}
The following hold for every fixed $\gl>0$ and $j=1$ or $2$, except
for $j=2$ in the trivial case $|\cS|=1$.
  \begin{thmenumerate}
\item
$\egl$ is the set of stationary points of $\psisxj$, possibly
with $0$ added:
\begin{equation*}
  \egl\cap(0,\infty)
=\left\{\mu:\dddmu\psisxj(\mu;\gl)=0\right\}.
\end{equation*}
\item
$\eglo$ is the set of global maximum points of $\psisxj$:
\begin{equation*}
  \eglo
=\left\{\mu:\psisxj(\mu;\gl)=\max_{\mui\ge0}\psisxj(\mui;\gl)\right\}
.
\end{equation*}
  \end{thmenumerate}
\end{theorem}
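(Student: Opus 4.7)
The strategy is to prove (i) by direct differentiation, and (ii) by showing that the global maximum of $\psi_{\cS,j}$ on $[0,\infty)$ is attained inside $\egl$, combined with the identity $\psi_{\cS,1}(\mu;\gl)=\psi_{\cS,2}(\mu;\gl)=\psis(\mu)$ for every $\mu\in\egl$. The last identity is immediate: substituting \eqref{chareq} into \eqref{psixdef} gives $\psi_{\cS,1}=\psis$, while in \eqref{psisxx} the argument of the log becomes $1$ and that term vanishes, leaving $\psi_{\cS,2}=\psis$.

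For (i), introduce $h(\mu)\=\mu\phi_\cS'(\mu)/\phi_\cS(\mu)=\E(\xmu\mid\xmu\in\cS)$ with $\xmu\sim\Po(\mu)$, so that \eqref{chareq} reads $\gl h(\mu)=\mu^2$. For $j=1$,
\begin{equation*}
\frac{d}{d\mu}\psi_{\cS,1}(\mu;\gl)=\frac{\phi_\cS'(\mu)}{\phi_\cS(\mu)}-\frac{\mu}{\gl}=\frac{\gl h(\mu)-\mu^2}{\mu\gl},
\end{equation*}
which vanishes on $(0,\infty)$ exactly on $\egl$. For $j=2$, write $\psi_{\cS,2}=\log\phi_\cS+\tfrac12 h(L-1)$ with $L(\mu)\=\log(\gl h(\mu)/\mu^2)$; using $(\log\phi_\cS)'=h/\mu$ and $L'(\mu)=h'(\mu)/h(\mu)-2/\mu$, a careful differentiation and cancellation of the four resulting terms yields
\begin{equation*}
\frac{d}{d\mu}\psi_{\cS,2}(\mu;\gl)=\tfrac12 h'(\mu)\log\frac{\gl h(\mu)}{\mu^2}.
\end{equation*}
This is the main calculation of the proof. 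Since $h'(\mu)=\Var(\xmu\mid\xmu\in\cS)/\mu$, which is strictly positive when $|\cS|\ge 2$, the derivative vanishes on $(0,\infty)$ iff $\gl h(\mu)=\mu^2$, establishing (i).

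For (ii), \refL{Lmu} gives $\phi_\cS'(\mu)/\phi_\cS(\mu)\le\CClmu$ for $\mu\ge 1$, hence $h(\mu)\le\CClmu\mu$, so for $\mu>\max(1,\CClmu\gl)$ we have $\gl h(\mu)<\mu^2$ and, by the derivative formulas above, $\frac{d}{d\mu}\psi_{\cS,j}<0$. Thus $\psi_{\cS,j}$ is strictly decreasing on $(\max(1,\CClmu\gl),\infty)$; since it is continuous on $[0,\infty)$ (with a finite extension at $0$ for $\psi_{\cS,2}$, and equal to $0$ or $-\infty$ at $0$ for $\psi_{\cS,1}$ according as $0\in\cS$ or not), its supremum is attained at some $\mu^*\in[0,\max(1,\CClmu\gl)]$. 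If $\mu^*>0$ then (i) gives $\mu^*\in\egl$. If $\mu^*=0$, then either $0\in\cS$ and hence $0\in\egl$, or $0\notin\cS$, in which case for $j=1$ one has $\psi_{\cS,1}(\mu)\to-\infty$ as $\mu\to 0^+$, while for $j=2$ one has $h(\mu)\to s=\min\cS\ge 1$ and hence $L(\mu)\to+\infty$, so $\frac{d}{d\mu}\psi_{\cS,2}>0$ on some $(0,\eps)$; in both cases $\mu^*=0$ is ruled out. Hence the maximum of $\psi_{\cS,j}$ on $[0,\infty)$ is attained inside $\egl$, equals $\max_{\mu\in\egl}\psis(\mu)$ by the first paragraph, and its argmax is therefore $\eglo$, proving (ii).
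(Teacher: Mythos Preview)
Your proof is correct and follows essentially the same route as the paper's own proof: compute the derivatives \eqref{ps1} and \eqref{ps2}, show the factor in front of the logarithm is strictly positive via the variance (the paper uses Cauchy--Schwarz, which is the same thing), and then use \refL{Lmu} plus the behaviour at $\mu=0$ to see the maximum is attained inside $\egl$. Your use of $h(\mu)=\mu\phis'(\mu)/\phis(\mu)$ and the identification $h'(\mu)=\Var(\xmu\mid\xmu\in\cS)/\mu$ is a clean way to package the computation, but the argument is the same.
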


\begin{proof}
\pfitem{i}
  Differentiation yields
  \begin{equation}\label{ps1}
\dddmu\psisx(\mu;\gl)
=
\frac{\phis'(\mu)}{\phis(\mu)}-\frac{\mu}{\gl}
  \end{equation}
and, after some simplifications,
  \begin{equation}\label{ps2}
\dddmu\psisxx(\mu;\gl)
=
\ddmu\Bigpar{\frac{\mu\phis'(\mu)}{2\phis(\mu)}}
\log\Bigpar{\frac{\gl\phis'(\mu)}{\mu\phis(\mu)}}.
  \end{equation}
By the \CSineq, provided $\mu>0$ and $|\cS|\ge2$,
\begin{multline}\label{ps2x}
\mu\ddmu{\biggpar{\frac{\mu\phis'(\mu)}{\phis(\mu)}}}
=
\mu\ddmu\biggpar{\frac{\sumjs j \mu^j/j!}{\sumjs \mu^j/j!}}  	
\\
=
\frac{\bigpar{\sumjs j^2 \mu^j/j!}\bigpar{\sumjs \mu^j/j!} 
  -\bigpar{\sumjs j \mu^j/j!}^2}
{\bigpar{\sumjs \mu^j/j!}^2}
>0.
\end{multline}
(See \refT{Tmonmu} below for a more general result.)
By \eqref{ps1}--\eqref{ps2x}, for $\mu>0$,
$\psisxj'(\mu;\gl)=0$ if
and only if 
$\phis'(\mu)/\phis(\mu)=\mu/\gl$, \ie, the characteristic equation
\eqref{chareq} holds.

\pfitem{ii}
By \eqref{ps1}--\eqref{ps2x} and \refL{Lmu}, $\psisxj$
is decreasing for large $\mu$. Furthermore, by the remarks prior to
the theorem, 
$\psisxj$ is either continuous at 0 or tends to $-\infty$ there.
This implies that $\psisxj$ has a finite maximum, attained at one or
several points in $[0,\infty)$. It remains to show that the maximum
points belong to $\egl$; it then follows that $\eglo$ equals the set
of maximum points.

If $\mu>0$ is a maximum point of $\psisxj$, then $\psisxj'(\mu)=0$ and
$\mu\in\egl$ by (i).

If $0$ is a maximum point and $0\in\cS$, then $0\in\egl$ by
definition.
Finally, if $0\notin\cS$, and $s\=\min\cS>0$,
then $\phis'(\mu)/\phis(\mu)\sim s/\mu\to\infty$ as $\mu\to0$, and it
follows from
\eqref{ps1}--\eqref{ps2x} that $\psisxj'>0$ for small $\mu$; hence
$0$ is not a maximum point in this case. (In fact, for $j=1$,
$\psisx(0)=-\infty$ when $0\notin\cS$, as remarked above.)
\end{proof}

We define $\muxmin(\gl):=\min\eglo$ and $\muxmax(\gl):=\max\eglo$; thus 
$|\eglo|=1$ 
(and \refT{T1} applies) if and only if $\muxmin=\muxmax$,
and in that case $\mux=\muxmin=\muxmax$.
We have defined $\mux$ only when $|\eglo|=1$; for convenience we
extend the definition to all $\gl>0$
by letting $\mux(\gl)$ by any element of $\eglo$
(for example $\muxmin(\gl)$ or $\muxmax(\gl)$).

\begin{corollary}\label{Cmax}
For every $\gl>0$ and $j=1,2$,
\begin{equation*}
\psis(\mux(\gl))
= \psisxj(\mux(\gl);\gl)
= \max_{\mu\ge0}  \psisxj(\mu;\gl).  
\end{equation*}
\end{corollary}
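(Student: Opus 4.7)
The plan is to assemble the equality chain directly from three facts already in place: the definition of $\mux(\gl)$ as an element of $\eglo$, the identity $\psis=\psisx$ on $\egl$, the relation $\psisxx \ge \psisx$ with equality on $\egl$, and Theorem \ref{Tmax}(ii).

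First I would observe that since $\mux(\gl)\in\eglo\subseteq\egl$, equation \eqref{psixdef} (stating that on $\egl$ the function $\psis$ coincides with the simpler $\psisx(\,\cdot\,;\gl)$) yields
\begin{equation*}
\psis(\mux(\gl)) = \psisx(\mux(\gl);\gl).
\end{equation*}
Next, invoking the remark preceding Theorem \ref{Tmax}, $\psisxx(\mu;\gl)\ge\psisx(\mu;\gl)$ for all $\mu\ge0$, with equality precisely on $\egl$. Applied at $\mu=\mux(\gl)\in\egl$, this gives
\begin{equation*}
\psisxx(\mux(\gl);\gl) = \psisx(\mux(\gl);\gl),
\end{equation*}
so that $\psis(\mux(\gl))=\psisxj(\mux(\gl);\gl)$ for both $j=1$ and $j=2$.

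Finally, Theorem \ref{Tmax}(ii) identifies $\eglo$ with the set of global maximizers of $\psisxj$ on $[0,\infty)$, whence
\begin{equation*}
\psisxj(\mux(\gl);\gl) = \max_{\mu\ge0}\psisxj(\mu;\gl),
\end{equation*}
completing the two equalities in the stated corollary.

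The only subtlety is the trivial case $|\cS|=1$ excluded from Theorem \ref{Tmax} for $j=2$: here $\cS=\set s$ with $s\ge1$ (as $\cS\neq\set0$ is standing in Section \ref{Smu}), and by the formula immediately preceding Theorem \ref{Tmax}, $\psisxx(\mu;\gl)=\tfrac12 s(\log(\gl s)-1)-\log(s!)$ is constant in $\mu$, so the maximum formula is trivially satisfied. I do not expect any real obstacle: the corollary is essentially a direct bookkeeping consequence of Theorem \ref{Tmax} together with the pointwise identification of $\psis$, $\psisx$, and $\psisxx$ on $\egl$.
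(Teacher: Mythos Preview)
Your proposal is correct and matches the paper's intended reasoning: the paper states Corollary~\ref{Cmax} without proof, treating it as immediate from Theorem~\ref{Tmax}(ii) together with the identification of $\psis$, $\psisx$, and $\psisxx$ on $\egl$ noted in \eqref{psixdef} and in the paragraph preceding Theorem~\ref{Tmax}. Your explicit handling of the singleton case $|\cS|=1$ for $j=2$ is a welcome addition that the paper leaves implicit.
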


\begin{theorem}
  \label{TGA}
If $0<\gl_1<\gl_2$, then $\mux(\gl_1)\le\mux(\gl_2)$, with equality
only if $\mux(\gl_1)=\mux(\gl_2)=0$.
\end{theorem}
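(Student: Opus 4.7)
The plan is to exploit the variational characterization of $\mux(\gl)$ provided by \refT{Tmax}(ii) (equivalently \refC{Cmax}): namely, $\mux(\gl)$ is a maximizer over $[0,\infty)$ of
\begin{equation*}
\psisx(\mu;\gl)=\log\phis(\mu)-\frac{\mu^2}{2\gl}.
\end{equation*}
This is the standard setting for a monotone comparative statics argument, because the $\gl$-dependent part $-\mu^2/(2\gl)$ has increasing differences in $(\mu,\gl)$.

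Concretely, set $\mu_i\=\mux(\gl_i)$ for $i=1,2$. By optimality,
\begin{equation*}
\psisx(\mu_1;\gl_1)\ge\psisx(\mu_2;\gl_1)\qquad\text{and}\qquad\psisx(\mu_2;\gl_2)\ge\psisx(\mu_1;\gl_2).
\end{equation*}
Adding these two inequalities, the $\log\phis$ contributions cancel and one obtains
\begin{equation*}
-\frac{\mu_1^2}{2\gl_1}-\frac{\mu_2^2}{2\gl_2}\ge-\frac{\mu_2^2}{2\gl_1}-\frac{\mu_1^2}{2\gl_2},
\end{equation*}
which rearranges to
\begin{equation*}
(\mu_2^2-\mu_1^2)\Bigpar{\frac{1}{\gl_1}-\frac{1}{\gl_2}}\ge0.
\end{equation*}
Since $\gl_1<\gl_2$ makes the second factor strictly positive, we conclude $\mu_2^2\ge\mu_1^2$, hence $\mu_1\le\mu_2$.

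For the strict part, suppose toward contradiction that $\mu_1=\mu_2=\mu>0$. Since $\mu>0$ lies in both $\egl[\gl_1]$ and $\egl[\gl_2]$, the characteristic equation \eqref{chareq} applied at $(\mu,\gl_1)$ and at $(\mu,\gl_2)$ gives
\begin{equation*}
\frac{\phis'(\mu)}{\phis(\mu)}=\frac{\mu}{\gl_1}\quad\text{and}\quad\frac{\phis'(\mu)}{\phis(\mu)}=\frac{\mu}{\gl_2},
\end{equation*}
forcing $\gl_1=\gl_2$, a contradiction. Therefore equality $\mux(\gl_1)=\mux(\gl_2)$ can only occur when both values equal $0$.

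The whole argument is essentially routine once the variational characterization from \refT{Tmax}(ii) is in hand; the only mild subtlety (hardly an obstacle) is remembering that $\mux$ was defined by \emph{any} choice of element of $\eglo$ when $|\eglo|>1$. The comparison inequality above is independent of that choice, so the monotonicity statement holds uniformly in the selection, and the characteristic-equation argument for strict inequality likewise uses only the fact that any positive $\mu_i$ is a root of \eqref{chareq} at the respective $\gl_i$.
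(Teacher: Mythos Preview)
Your proof is correct and follows essentially the same approach as the paper: both use the variational characterization of \refT{Tmax}(ii) together with the increasing-differences structure of $\psisx(\mu;\gl)$ in $(\mu,\gl)$, and both settle the equality case via the characteristic equation. Your ``add the two optimality inequalities'' packaging is a minor stylistic variant of the paper's direct comparison $\psisx(\mu;\gl_2)<\psisx(\muxmax(\gl_1);\gl_2)$ for $\mu<\muxmax(\gl_1)$; note that since your argument works for any choices $\mu_i\in\eglox{\gl_i}$, it also recovers the paper's slightly sharper inequality $\muxmax(\gl_1)\le\muxmin(\gl_2)$.
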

\begin{proof}
If $\mu<\muxmax(\gl_1)\in\eglox{\gl_1}$, then
$\psisx(\mu;\gl_1)\le\psisx(\muxmax(\gl_1);\gl_1)$ by \refT{Tmax}(ii),
and thus
\begin{equation*}
  \begin{split}
\psisx(\mu;\gl_2)
&=
\psisx(\mu;\gl_1)+\frac{\mu^2}{2}\Bigpar{\frac1{\gl_1}-\frac1{\gl_2}}
\\
&<\psisx(\muxmax(\gl_1);\gl_1)
+\frac{\muxmax(\gl_1)^2}{2}\Bigpar{\frac1{\gl_1}-\frac1{\gl_2}}
=
\psisx(\muxmax(\gl_1);\gl_2),
  \end{split}
\end{equation*}
so $\mu$ is not a global maximum point of $\psisx(\mu;\gl_2)$ and, by
\refT{Tmax}(ii) again, $\mu\notin \eglox{\gl_2}$.
Hence, $\mux(\gl_2)\ge\muxmin(\gl_2)\ge\muxmax(\gl_1)\ge\mux(\gl_1)$.
Equality is possible only if $\mu\=\mux(\gl_1)=\mux(\gl_2)\in
\eglx{\gl_1}\cap\eglx{\gl_2}$, and then the characteristic equation
\eqref{chareq} is satisfied with $\mu$ and both $\gl_1$ and $\gl_2$;
hence $\mu^2/\gl_1=\mu^2/\gl_2$ and $\mu=0$.
\end{proof}

\begin{theorem}\label{Tmono}
  \begin{thmenumerate}
\item
  For every $\gl>0$,
$\mux(\gl')\upto\muxmin(\gl)$ as $\gl'\upto\gl$
and
$\mux(\gl')\downto\muxmax(\gl)$ as $\gl'\downto\gl$.
\item
$\mux(\gl)\to0$ as $\gl\to0$.
\item
$\mux(\gl)\to\infty$ as $\gl\to\infty$.
  \end{thmenumerate}
\end{theorem}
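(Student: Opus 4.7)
The plan is to combine the monotonicity established in \refT{TGA} with joint continuity of $\psisx(\mu;\gl)=\log\phis(\mu)-\mu^2/(2\gl)$ and the characterization in \refT{Tmax}(ii) of $\eglo$ as the set of global maximizers of $\psisx(\cdot;\gl)$. A first observation is that the proof of \refT{TGA} in fact yields the stronger statement $\muxmax(\gl_1)\le\muxmin(\gl_2)$ whenever $\gl_1<\gl_2$, so any selection $\gl\mapsto\mux(\gl)$ is non-decreasing. Hence the one-sided limits $\mu_-:=\lim_{\gl'\upto\gl}\mux(\gl')$ and $\mu_+:=\lim_{\gl'\downto\gl}\mux(\gl')$ exist and are finite (to bound $\mu_+$ use monotonicity at some $\gl''>\gl$), and the sandwich $\mu_-\le\muxmin(\gl)\le\muxmax(\gl)\le\mu_+$ holds.

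For part (i), I would upgrade these inequalities to equalities by a standard upper-semicontinuity-of-argmax argument. By \refC{Cmax}, for every $\mu_0\ge 0$,
\[
\psisx(\mu_0;\gl')\le\psisx(\mux(\gl');\gl').
\]
Letting $\gl'\to\gl$ from either side and using joint continuity of $\psisx$, the right-hand side tends to $\psisx(\mu_\pm;\gl)$, so $\mu_\pm$ is a global maximum of $\psisx(\cdot;\gl)$ and therefore belongs to $\eglo$ by \refT{Tmax}(ii). This forces $\mu_-\ge\muxmin(\gl)$ and $\mu_+\le\muxmax(\gl)$, closing both gaps. A minor complication is that when $0\notin\cS$ one has $\psisx(0;\gl)=-\infty$; but then the characteristic equation \eqref{chareq}, together with $\phis'(\mu)/\phis(\mu)\to\infty$ as $\mu\to 0^+$, already precludes $\mu_\pm=0$, so the continuity argument runs inside $(0,\infty)$ where $\psisx$ is genuinely continuous.

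For part (ii), monotonicity gives $L:=\lim_{\gl\downto 0}\mux(\gl)\in[0,\infty)$, and I would argue by contradiction that $L=0$. If $L>0$, then for $\gl\in(0,1]$ the value $\mux(\gl)$ lies in the compact interval $[L,\mux(1)]\subset(0,\infty)$, on which $\phis'(\mu)/\phis(\mu)$ is bounded; yet the characteristic equation forces $\phis'(\mux)/\phis(\mux)=\mux/\gl\ge L/\gl\to\infty$, a contradiction. For part (iii), monotonicity similarly gives $L:=\lim_{\gl\to\infty}\mux(\gl)\in[0,\infty]$. I would first show $\mux(\gl)>0$ eventually: since $\cS\ne\set{0}$, $\phis(\mu)\to\infty$ as $\mu\to\infty$, so some $\mu_0$ satisfies $\phis(\mu_0)>1$; then $\psisx(\mu_0;\gl)\to\log\phis(\mu_0)>0$ as $\gl\to\infty$, whereas $\psisx(0;\gl)\in\set{0,-\infty}$, so $0$ fails to be a global maximum of $\psisx(\cdot;\gl)$ for $\gl$ large, forcing $\mux(\gl)>0$ and, by monotonicity, $\mux(\gl)\ge c>0$ eventually. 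If $L<\infty$ were to hold, the characteristic equation would give $\phis'(\mux)/\phis(\mux)=\mux/\gl\le L/\gl\to 0$, but on the compact set $[c,L]\subset(0,\infty)$ the ratio $\phis'/\phis$ is bounded below by a positive constant (because $\phis'>0$ on $(0,\infty)$ once $\cS$ contains a positive integer), a contradiction.

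The hard part will be part (i): one must track carefully the distinction between $\muxmin$ and $\muxmax$ in the non-singleton case and deal with the boundary behaviour at $\mu=0$ when $0\notin\cS$. Once (i) is in place, parts (ii) and (iii) follow routinely by pairing the monotonicity of $\mux$ with the characteristic equation on a compact interval.
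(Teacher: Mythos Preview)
Your proof is correct. Part (i) is essentially identical to the paper's argument: both pass to the limit in the inequality $\psisx(\mu_0;\gl')\le\psisx(\mux(\gl');\gl')$ to conclude that the one-sided limit lies in $\eglo$, then use the monotonicity from \refT{TGA} (in the stronger form $\muxmax(\gl_1)\le\muxmin(\gl_2)$, which, as you note, is what the proof of \refT{TGA} actually establishes) to pin down which element of $\eglo$ it is. Your extra remark on the boundary case $0\notin\cS$ is a harmless elaboration.

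For parts (ii) and (iii) you take a genuinely different route. The paper argues entirely through $\psisx$: for (ii) it compares $\psisx(\mu_0/2;\gl)$ with $\psisx(\mux(\gl);\gl)$ and observes that the difference is $(\mux(\gl)^2-\mu_0^2/4)/(2\gl)+\O(1)\to+\infty$ as $\gl\to0$; for (iii) it compares $\psisx(\mu_0+1;\gl)$ with $\psisx(\mux(\gl);\gl)$ and lets $\gl\to\infty$ to obtain $\log\phis(\mu_0+1)\le\log\phis(\mu_0)$, contradicting strict monotonicity of $\phis$. You instead invoke the characteristic equation $\phis'(\mux)/\phis(\mux)=\mux/\gl$ on a compact interval in $(0,\infty)$ and derive a contradiction from boundedness of $\phis'/\phis$ there. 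Your route is equally valid but costs you a preliminary step in (iii), namely showing $\mux(\gl)>0$ eventually so that the characteristic equation is available; the paper's $\psisx$-comparison avoids this since it never needs $\mux$ to be positive. Conversely, your argument is perhaps more transparent once one thinks of $\mux$ as determined by \eqref{chareq}.
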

\begin{proof}
\pfitem{i}
Let $\mu_0\=\lim_{\gl'\upto\gl}\mux(\gl')$; the limit exist by the
monotonicity in \refT{TGA}.
For any fixed $\mu$, $\psisx(\mu;\gl')\le\psisx(\mux(\gl');\gl')$ 
by \refT{Tmax}
and
it follows by continuity that
$\psisx(\mu;\gl)\le\psisx(\mu_0;\gl)$. Hence, by \refT{Tmax} again,
$\mu_0\in\eglo$, and \refT{TGA} implies that $\mu_0=\muxmin(\gl)$.

The second statement is proved similarly.

\pfitem{ii} This is similar.
Assume $\mu_0\=\lim_{\gl\to0}\mux(\gl)>0$, and let
$\mua\=\mu_0/2$. Then $\psisx(\mua;\gl)\le\psisx(\mux(\gl);\gl)$ for all $\gl$ 
by \refT{Tmax}
which contradicts the fact that,
by \eqref{psixdef},
\begin{align*}
\psisx(\mua;\gl)-\psisx(\mux(\gl);\gl)
&=\frac{\mux(\gl)^2-\mua^2}{2\gl}+\O(1)\\
&\ge\frac{\mu_0^2-\mua^2}{2\gl}+\O(1)
\to\infty
\end{align*}
as $\gl\to0$.

\pfitem{iii}
Assume $\mu_0\=\lim_{\gl\to\infty}\mux(\gl)<\infty$, and let
$\mua\=\mu_0+1$. Then $\psisx(\mua;\gl)\le\psisx(\mux(\gl);\gl)$ for all $\gl$ 
by \refT{Tmax}
and
it follows from \eqref{psixdef} by continuity, letting
$\gl\to\infty$, that
$\log\phis(\mua)\le\log\phis(\mu_0)$, a contradiction since $\phis$ is
strictly increasing.
\end{proof}

\begin{remark}
For large $\gl$, we have the estimates
$\cc \gl\qq\le\mux(\gl)\le\CC\gl$,
where the lower bound follows from \eqref{chareq}
 and the upper from
\eqref{hbnd}.
Examples \refand{EN}{E01} show that both these orders of growth can be
attained. 
\end{remark}

We see from \refT{Tmono} that $|\eglo|>1$ exactly when
$\mux(\gl)$ is discontinuous, and that all discontinuities are jump
discontinuities: $\mux$ jumps form $\muxmin(\gl)$ to $\muxmax(\gl)$. 
In accordance with \refR{Rphase}, we interpret
these discontinuities as \emph{phase transitions} of $\gnlns$. 
More generally, we say that we have a phase transition at each $\gl$
where $\mux$ is not analytic. (See, further, \refT{Tphase}.)
We show that there is only a countable number of phase transitions
with jump discontinuities, and we note from
\refE{Einfty} that the number may be infinite.

\begin{theorem}\label{Tcountable}
The set $\Lex\=\set{\gl>0:|\eglo|>1}
=\set{\gl:\muxmin(\gl)<\muxmax(\gl)}$ 
of $\gl$ such that
  \refT{T1} does not apply is at most countable.
\end{theorem}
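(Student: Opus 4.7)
My plan is to exploit the monotonicity established in \refT{TGA} and \refT{Tmono}(i) to realize $\Lex$ as the discontinuity set of a monotone real-valued function, which is automatically at most countable.

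More concretely, I would consider the two functions $\muxmin,\muxmax\colon(0,\infty)\to[0,\infty)$. From the proof of \refT{TGA} I already have the chain $\muxmax(\gl_1)\le\muxmin(\gl_2)\le\muxmax(\gl_2)$ whenever $0<\gl_1<\gl_2$ (this is precisely the inequality $\mux(\gl_2)\ge\muxmin(\gl_2)\ge\muxmax(\gl_1)\ge\mux(\gl_1)$ that was used there, taking $\mux(\gl_1)=\muxmax(\gl_1)$ and $\mux(\gl_2)=\muxmin(\gl_2)$). Hence $\muxmax$ is a non-decreasing function on $(0,\infty)$.

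Next I would identify the jump set of $\muxmax$. By \refT{Tmono}(i), for every $\gl>0$, $\mux(\gl')\upto\muxmin(\gl)$ as $\gl'\upto\gl$ and $\mux(\gl')\downto\muxmax(\gl)$ as $\gl'\downto\gl$; combined with the sandwich $\muxmax(\gl_1)\le\muxmin(\gl_2)\le\muxmax(\gl_2)$ for $\gl_1<\gl_2$, this shows that the left-hand limit of $\muxmax$ at $\gl$ equals $\muxmin(\gl)$ and its right-hand limit equals $\muxmax(\gl)$. Consequently $\muxmax$ is continuous at $\gl$ exactly when $\muxmin(\gl)=\muxmax(\gl)$, i.e., exactly when $\gl\notin\Lex$. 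Thus $\Lex$ coincides with the set of discontinuities of the monotone function $\muxmax$.

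Finally, I would invoke the classical fact that a monotone function on an interval has at most countably many discontinuities: to each $\gl\in\Lex$ one can associate a rational number in the open interval $\bigl(\muxmin(\gl),\muxmax(\gl)\bigr)$, and distinct points of $\Lex$ give disjoint such intervals (by monotonicity), producing an injection of $\Lex$ into $\bbQ$. This gives $|\Lex|\le\aleph_0$, proving the theorem. I don't expect any real obstacle; the only non-routine ingredient is combining the two one-sided limits from \refT{Tmono}(i) to read off the left- and right-limits of $\muxmax$, but this is straightforward from what is already proved.
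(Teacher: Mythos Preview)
Your proof is correct, and its final step --- associating to each $\gl\in\Lex$ the open interval $\bigl(\muxmin(\gl),\muxmax(\gl)\bigr)$, observing these are pairwise disjoint by the inequality $\muxmax(\gl_1)\le\muxmin(\gl_2)$ for $\gl_1<\gl_2$ from the proof of \refT{TGA}, and picking a rational in each --- is exactly the paper's one-line argument. Your detour through \refT{Tmono}(i) to identify $\Lex$ as the jump set of the monotone function $\muxmax$ is correct but unnecessary: the disjointness of the intervals follows directly from \refT{TGA} without any appeal to one-sided limits.
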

\begin{proof}
  By \refT{TGA}, the open intervals
  $\bigpar{\muxmin(\gl),\muxmax(\gl})$, $\gl>0$, are disjoint, and thus
  at most a countable number of them are non-empty.
\end{proof}

It follows from \refT{TGA} and its corollaries that $\mux$ is
the \emph{inverse function} of a continuous non-decreasing function
with graph $\set{(\mu,\gl):\mu\in[\muxmin(\gl),\muxmax(\gl)]}$;
the exceptional set $\Lex$ consists of the values taken by this
function on the intervals where it is constant.

For $\mu>0$ 
we can 
rewrite the characteristic equation \eqref{chareq}
as
\begin{equation}\label{glx}
  \gl=\glx(\mu)\=\frac{\mu\phis(\mu)}{\phis'(\mu)}.
\end{equation}
Thus, $\egl=\set{\mu>0:\glx(\mu)=\gl}$ or
$\set{\mu>0:\glx(\mu)=\gl}\cup\set0$.  
Note that our assumption $\cS\neq\set0$ implies that $\phis'(\mu)>0$
for all $\mu>0$, so $\glx$ is well-defined.

\begin{lemma}\label{Lglx}
The function $\glx$ is positive and analytic on $(0,\infty)$, 
with $\lim_{\mu\to\infty}\glx(\mu)=\infty$ and
\begin{align*}
  \lim_{\mu\to0}\glx(\mu)=
  \begin{cases}
0,& 0\notin\cS,\\
0,& 0\in\cS,\,1\in\cS,\\
1,& 0\in\cS,\,1\notin\cS,\,2\in\cS,\\
\infty,& 0\in\cS,\,1\notin\cS,\,2\notin\cS.
  \end{cases}
\end{align*}
\end{lemma}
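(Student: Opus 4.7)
The plan is to establish the three claims --- positivity/analyticity on $(0,\infty)$, the limit at $\infty$, and the four cases of the limit at $0$ --- in order, by direct inspection of the defining formula $\glx(\mu)=\mu\phis(\mu)/\phis'(\mu)$ together with \refL{Lmu}.

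First, positivity and analyticity: $\phis$ is entire, and on $(0,\infty)$ every term $\mu^j/j!$ with $j\in\cS$ is positive, so $\phis(\mu)>0$. For $\phis'(\mu)=\sum_{j\in\cS,\,j\ge1}\mu^{j-1}/(j-1)!$, the hypothesis $\cS\neq\set0$ guarantees at least one index $j\ge1$ in $\cS$, so $\phis'(\mu)>0$ for $\mu>0$. Hence $\glx$ is a ratio of entire functions with non-vanishing denominator on $(0,\infty)$, so it is analytic and positive there.

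For $\mu\to\infty$, \refL{Lmu} gives $\phis'(\mu)\le\CClmu\phis(\mu)$ for $\mu\ge1$, so $\glx(\mu)=\mu\phis(\mu)/\phis'(\mu)\ge\mu/\CClmu\to\infty$.

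For $\mu\to 0$, I would do a Taylor-expansion bookkeeping driven by the smallest indices present in $\cS$. Write $s\=\min\cS$ and, when $s=0$, let $s'\=\min(\cS\setminus\set0)$. If $s\ge1$ (\ie, $0\notin\cS$), then $\phis(\mu)\sim\mu^s/s!$ and $\phis'(\mu)\sim\mu^{s-1}/(s-1)!$, giving $\glx(\mu)\sim \mu^2/s\to0$. If $s=0$ and $1\in\cS$, then $\phis(\mu)\to 1$ and $\phis'(\mu)\to1$, so $\glx(\mu)\sim\mu\to0$. If $s=0$, $1\notin\cS$ and $2\in\cS$, then $\phis(\mu)=1+O(\mu^2)$ while $\phis'(\mu)=\mu+O(\mu^{s'-1})$ with $s'\ge2$, hence $\glx(\mu)=\mu(1+O(\mu^2))/(\mu+O(\mu^2))\to1$. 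Finally, if $s=0$ and $1,2\notin\cS$, then $s'\ge3$, $\phis(\mu)\to1$ and $\phis'(\mu)\sim\mu^{s'-1}/(s'-1)!$, so $\glx(\mu)\sim(s'-1)!\,\mu^{2-s'}\to\infty$ because $s'-2\ge1$.

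No step looks subtle: the only bound needed from outside is \refL{Lmu} for the limit at infinity, and the case analysis at $0$ is just reading off the dominant term of numerator and denominator. The most delicate case is $s=0,\ 1\notin\cS,\ 2\in\cS$, where numerator and denominator both tend to $0$ (well, actually numerator $\mu\phis(\mu)\to0$ and denominator $\phis'(\mu)\to0$) at the same linear rate in $\mu$, producing the exceptional value $1$ rather than $0$ or $\infty$; this is the only place where one must be careful to keep the $O(\mu^2)$ remainders explicit rather than absorbing them into a single asymptotic constant.
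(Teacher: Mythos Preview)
Your proof is correct and follows exactly the approach of the paper, which tersely says positivity and analyticity are evident, invokes \refL{Lmu} for $\mu\to\infty$, and refers to the leading Taylor terms of $\phis$ and $\phis'$ for $\mu\to0$; you have simply spelled out the case analysis the paper leaves implicit. One cosmetic slip: in the third case you write $\phis'(\mu)=\mu+O(\mu^{s'-1})$ with $s'=2$, which is vacuous as written --- you mean (and correctly use on the next line) $\phis'(\mu)=\mu+O(\mu^{2})$, the remainder coming from indices $j\ge3$ in $\cS$.
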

\begin{proof}
That $\glx$ is analytic and positive is evident.
By \refL{Lmu}, $\glx(\mu)\ge\cc\mu$ for large $\mu$, and
the behaviour as $\mu\to0$ follows by looking at the first non-zero
terms in the Taylor expansions of $\phis$ and $\phis'$.
\end{proof}

\begin{lemma}
  \label{Lpsi}
$\psis'(\mu)$ and $\glx'(\mu)$ have the same sign for every
  $\mu>0$. Hence $\psis$ and $\glx$ have the same stationary points
  and are increasing or decreasing on the same intervals.
\end{lemma}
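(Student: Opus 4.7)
The plan is to show that both $\psis'(\mu)$ and $\glx'(\mu)$ equal a common factor multiplied by a strictly positive quantity on $(0,\infty)$. The key observation is that both functions are naturally expressed through a single quantity, namely the mean $\nu(\mu) := \mu\phis'(\mu)/\phis(\mu)$ of the $\PoS(\mu)$ distribution introduced in \eqref{epos}. Since the standing assumption $\cS\neq\set0$ forces $\phis'(\mu)>0$ for all $\mu>0$, we have $\nu(\mu)>0$ on this range, which is what will make the positivity of the prefactors work out.

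Using $\nu$, I would rewrite $\psis(\mu) = \log\phis(\mu) - \tfrac12\nu(\mu)$ directly from \eqref{psidef}, and $\glx(\mu) = \mu^2/\nu(\mu)$ from \eqref{glx}. Differentiating the first, and using the identity $\phis'(\mu)/\phis(\mu) = \nu(\mu)/\mu$, gives $\psis'(\mu) = (2\nu(\mu) - \mu\nu'(\mu))/(2\mu)$. The quotient rule applied to the second gives $\glx'(\mu) = \mu(2\nu(\mu) - \mu\nu'(\mu))/\nu(\mu)^2$. The two prefactors $1/(2\mu)$ and $\mu/\nu(\mu)^2$ are strictly positive on $(0,\infty)$, so $\psis'$ and $\glx'$ share sign at every $\mu>0$, which is the first assertion.

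The second sentence of the lemma is then immediate: the stationary points of each function are the zeros of its derivative, so they coincide, and each function is increasing (resp.\ decreasing) exactly where its derivative is nonnegative (resp.\ nonpositive), which is the same region for both. There is no substantive obstacle here; the proof reduces to recognising the common factor $2\nu(\mu) - \mu\nu'(\mu)$ hidden inside both derivatives. The only care needed is the verification that the two prefactors do not vanish, for which the running hypothesis $\cS\neq\set0$ (guaranteeing $\nu(\mu)>0$) is essential.
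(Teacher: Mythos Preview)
Your proof is correct. Both derivatives are computed cleanly in terms of $\nu(\mu)$, and the common factor $2\nu(\mu)-\mu\nu'(\mu)$ is exactly what drives the result; the prefactors $1/(2\mu)$ and $\mu/\nu(\mu)^2$ are indeed strictly positive on $(0,\infty)$ under the standing assumption $\cS\neq\{0\}$.

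The paper reaches the same conclusion by a slightly different and somewhat slicker route. It observes that $\psis(\mu)=\psisx(\mu;\glx(\mu))$ by \eqref{psidef} and \eqref{psixdef}, and differentiates this composite via the chain rule. The $\mu$-partial of $\psisx$ at $\gl=\glx(\mu)$ vanishes automatically (this is precisely the defining relation \eqref{glx} for $\glx$, cf.\ \eqref{ps1}), leaving only the $\gl$-partial times $\glx'(\mu)$; since $\partial\psisx/\partial\gl=\mu^2/(2\gl^2)>0$, one obtains directly the identity $\psis'(\mu)=\dfrac{\mu^2}{2\glx(\mu)^2}\,\glx'(\mu)$. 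This gives the sign comparison in one line without separately computing both derivatives, whereas your approach computes each derivative explicitly and then matches them up. The two arguments are of course equivalent (substituting $\glx=\mu^2/\nu$ into the paper's identity recovers your formula), but the paper's version makes clearer \emph{why} the signs must agree: $\psis$ is obtained from $\psisx$ by evaluating along the curve where the $\mu$-partial vanishes.
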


\begin{proof}
  By \eqref{psidef} and \eqref{psixdef},
$\psis(\mu)=\psisx(\mu;\glx(\mu))$. Differentiating and using
  \eqref{ps1} we obtain, for all $\mu>0$,
  \begin{equation*}
\psis'(\mu)
=
\ddpmu\psisx(\mu,\glx(\mu))
+
\ddpgl\psisx(\mu,\glx(\mu))\glx'(\mu)
=0+\frac{\mu^2}{2\glx(\mu)^2}\glx'(\mu)	.
\qedhere
  \end{equation*}
\end{proof}

\begin{theorem}
  \label{Tlex}
\begin{thmenumerate}
  \item
If $\glx$ is decreasing on an interval $(\mu_1,\mu_2)$ with
$0\le\mu_1<\mu_2$, then there exists $\gl\in\Lex$ with
$\muxmin(\gl)\le\mu_1<\mu_2\le\muxmax(\gl)$.
\item
$\Lex=\emptyset$ if and only if $\glx$ is increasing on $(0,\infty)$.
In this case, for every $\gl>0$ either $\egl=\set{\mux(\gl)}$ 
with $\mux(\gl)\ge0$
or
$\egl=\set{0,\mux(\gl)}$
with $\mux(\gl)>0$.
\end{thmenumerate}
\end{theorem}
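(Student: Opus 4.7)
\emph{Proof plan.} For part (i), the aim is to exhibit $\gl^*\in\Lex$ whose jump interval $[\muxmin(\gl^*),\muxmax(\gl^*)]$ contains $(\mu_1,\mu_2)$. The natural candidate is $\gl^*\=\inf\set{\gl>0:\mux(\gl)\ge\mu_2}$, which lies in $(0,\infty)$ by \refT{Tmono}\,(ii),(iii); since $\mux(\gl)\ge\mu_2$ for every $\gl>\gl^*$ (by the monotonicity of \refT{TGA} combined with the infimum definition), the right limit from \refT{Tmono}\,(i) immediately yields $\muxmax(\gl^*)\ge\mu_2$. The crux is the matching bound $\muxmin(\gl^*)\le\mu_1$, which I would establish by contradiction: if it failed, \refT{TGA} and the defining property of $\gl^*$ would produce some $\gl_a<\gl^*$ with $\mux(\gl_a)\in(\mu_1,\mu_2)$, and then $\mu_1<\mux(\gl_a)\le\mux(\gl)<\mu_2$ for every $\gl\in[\gl_a,\gl^*)$. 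For any two such values $\gl_a\le\gl_1<\gl_2<\gl^*$, the relation $\mux(\gl)\in\egl$ gives $\glx(\mux(\gl_1))=\gl_1<\gl_2=\glx(\mux(\gl_2))$, so $\mux(\gl_1)\ne\mux(\gl_2)$, whence \refT{TGA} forces $\mux(\gl_1)<\mux(\gl_2)$. But this means $\glx$ takes the strictly smaller value at the strictly smaller of two points of $(\mu_1,\mu_2)$, contradicting the hypothesis that $\glx$ is decreasing there. Hence $\mux(\gl)\le\mu_1$ for all $\gl<\gl^*$, and the left limit in \refT{Tmono}\,(i) delivers $\muxmin(\gl^*)\le\mu_1$, placing $\gl^*\in\Lex$.

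For part (ii), I reduce the forward direction to (i). By \refL{Lglx}, $\glx$ is analytic on $(0,\infty)$; since $\glx\to\infty$ at $\infty$, its derivative is not identically zero, so the zeros of $\glx'$ are isolated. A failure of strict monotonicity of $\glx$ therefore produces an open subinterval on which $\glx$ is strictly decreasing, and (i) then supplies an element of $\Lex$. For the converse, if $\glx$ is strictly increasing then $\glx(\mu)=\gl$ has at most one positive solution, so $\egl$ has at most two elements --- the possible positive root together with $0$ when $0\in\cS$. By \refL{Lpsi}, $\psis$ is strictly increasing on $(0,\infty)$, so in the two-element case $\psis(\mu^+)>\psis(0)$ selects $\eglo=\set{\mu^+}$ with $\mux=\mu^+>0$, while in the one-element case $\eglo=\egl$ is automatically a singleton. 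Thus $|\eglo|=1$ for every $\gl$, giving $\Lex=\emptyset$ and precisely the stated classification of $\egl$.

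\emph{Main obstacle.} All the substantive work sits in the contradiction step of (i), where the global monotonicity of $\mux(\gl)$ in $\gl$ from \refT{TGA} has to be played against the local decreasing hypothesis on $\glx$ via the identity $\glx\circ\mux=\mathrm{id}$ on $\egl$. The boundary case $\mu_1=0$ needs no separate treatment: \refL{Lglx} shows that $\glx$ can be decreasing on an interval with left endpoint $0$ only when $\lim_{\mu\downto0}\glx(\mu)\in\set{1,\infty}$, each of which forces $0\in\cS$, so the value $0$ is legitimately available as $\muxmin(\gl^*)$.
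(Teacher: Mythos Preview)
Your proof is correct and follows essentially the same route as the paper. The only cosmetic differences are that the paper defines its candidate as $\glo=\sup\{\gl:\muxmin(\gl)\le\mu_1\}$ rather than your $\gl^*=\inf\{\gl:\mux(\gl)\ge\mu_2\}$, and obtains the contradiction in (i) by first proving the standalone fact $\muxmin(\gl)\notin(\mu_1,\mu_2)$ for every $\gl$ via a single sequence $\gl_n\upto\gl$ (giving $\gl_n=\glx(\mux(\gl_n))>\glx(\muxmin(\gl))=\gl$), whereas you work directly with two values $\gl_1<\gl_2$ inside $[\gl_a,\gl^*)$; both arguments exploit exactly the same ingredients (\refT{TGA}, \refT{Tmono}, the identity $\glx(\mux(\gl))=\gl$, and \refL{Lpsi} for part (ii)).
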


\begin{proof}
  \pfitem{i}
Suppose that $\muxmin(\gl)\in(\mu_1,\mu_2)$ for some $\gl>0$. Taking a
sequence $\gl_n\upto\gl$, we have $\mux(\gl_n)\upto\muxmin(\gl)$ by
\refT{Tmono}, so, for large $n$, $\mu_1<\mux(\gl_n)<\muxmin(\gl)<\mu_2$
and hence $\gl_n=\glx(\mux(\gl_n))>\glx(\muxmin(\gl))=\gl$, a
contradiction.
Hence, $\muxmin(\gl)\notin(\mu_1,\mu_2)$ for all $\gl>0$. 

Let $\glo\=\sup\set{\gl:\muxmin(\gl)\le\mu_1}$.
Thus, $\muxmin(\gl)\ge\mu_2$ for $\gl>\glo$. By
\refT{Tmono}(ii)(iii), $0<\glo<\infty$, and by \refT{Tmono}(i)
$\muxmin(\glo)\le\mu_1$ and $\muxmax(\glo)\ge\mu_2$.
In particular, $\muxmin(\glo)<\muxmax(\glo)$ so $\glo\in\Lex$.

\pfitem{ii}
If $\glx$ is not increasing, then $\glx'(\mu)<0$ for some $\mu>0$ and
(i) applies to some interval $(\mu-\eps,\mu+\eps)$ and shows that
$\Lex\neq\emptyset$. 

If $\glx$ is increasing, then $\psis$ is (strictly) increasing on
$(0,\infty)$ by \refL{Lpsi}; if $0\in\cS$, then $\psis$ is continuous
at 0 and thus increasing on $[0,\infty)$ also.
Consequently, $\egl$ contains a unique $\mux=\max\egl$ that maximizes
$\psis$.
Further, when $\glx$ is increasing, there is at most one positive
solution to $\gl=\glx(\mu)$, and thus to \eqref{chareq}, and the
result on $\egl$ follows.
\end{proof}

We next study whether $\mux=0$ is possible. Note that this is a
rather degenerate case, when \refT{T1} shows that $\gnlns$ is very
sparse with $\o_p(n)$ edges and $n_0/n\pto1$, which is
to say that $n(1-\o_p(1))$
vertices are isolated.

\begin{theorem}\label{Tmux0}
  \begin{thmenumerate}
\item
If\/ $0\notin\cS$, then $\mux>0$ for every $\gl>0$.	
\item
If\/ $0\in\cS$ and $1\in\cS$, then $\mux>0$ for every $\gl>0$.	
\item
If\/ $0\in\cS$ and $1\notin\cS$, 
then there exists $\gl_0>0$ such that
$\mux=0$ for every $\gl<\gl_0$, but 
$\mux>0$ for every $\gl>\gl_0$.
  \end{thmenumerate}
\end{theorem}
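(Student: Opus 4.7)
Part (i) is immediate from the definition \eqref{egl}: when $0\notin\cS$, $\mu=0$ is excluded from $\egl$ and hence from $\eglo$, so every choice of $\mux(\gl)$ is strictly positive.

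For (ii) and (iii), assume $0\in\cS$, so $\phis(0)=1$ and $\psisx(0;\gl)=0$. By \refT{Tmax}(ii), $\eglo$ equals the set of global maximizers of $\psisx(\,\cdot\,;\gl)$ on $[0,\infty)$. Thus $0$ is a maximizer iff $\psisx(\mu;\gl)\le 0$ for every $\mu>0$, and the only maximizer iff that inequality is strict. Since $\log\phis(\mu)>0$ for $\mu>0$ (as $\phis$ is strictly increasing with $\phis(0)=1$), the plan is to introduce
\[
f(\mu)\=\frac{\mu^2}{2\log\phis(\mu)},\qquad \mu>0,
\]
and to set $\gl_0\=\inf_{\mu>0}f(\mu)\in[0,\infty]$. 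Then $\mux(\gl)=0$ whenever $\gl<\gl_0$, and $\mux(\gl)>0$ whenever $\gl>\gl_0$, so the theorem reduces to deciding whether $\gl_0=0$ or $\gl_0>0$.

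In case (ii), $1\in\cS$ gives $\phis(\mu)\ge 1+\mu$, so $f(\mu)\le\mu^2/\bigpar{2\log(1+\mu)}\to 0$ as $\mu\to 0^+$, which forces $\gl_0=0$ and hence $\mux(\gl)>0$ for every $\gl>0$. In case (iii), let $s\=\min(\cS\setminus\set 0)\ge 2$. The Taylor expansion $\phis(\mu)=1+\mu^s/s!+\O(\mu^{s+1})$ at the origin gives $f(\mu)\sim(s!/2)\mu^{2-s}$ as $\mu\to 0^+$, which tends to $1$ if $s=2$ and to $+\infty$ if $s\ge 3$. At the other end, $\phis(\mu)\le e^\mu$ yields $\log\phis(\mu)\le\mu$ and thus $f(\mu)\ge\mu/2\to\infty$ as $\mu\to\infty$. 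Since $f$ is continuous and strictly positive on $(0,\infty)$, with strictly positive (or infinite) limits at both endpoints, $\gl_0=\inf f$ is attained and is strictly positive.

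The only delicate point is the reformulation via \refT{Tmax}(ii): one must check that the strict inequality $\psisx(\mu;\gl)<0$ for all $\mu>0$ indeed forces $\eglo=\set0$ (not just $0\in\eglo$), which is immediate from the theorem. Everything else is the Taylor expansion at the origin, used to split cases (ii) and (iii), together with the elementary bound $\phis(\mu)\le e^\mu$ at infinity.
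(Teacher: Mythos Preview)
Your argument is correct. Part~(i) is identical to the paper's. For (ii) and (iii), both you and the paper start from \refT{Tmax}(ii), but then diverge.

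The paper handles (ii) by computing $\frac{d}{d\mu}\psisx(\mu;\gl)\to 1>0$ as $\mu\to0$ directly from \eqref{ps1}, so $0$ is not even a local maximum. For (iii), the paper works instead with $\glx(\mu)=\mu\phis(\mu)/\phis'(\mu)$: by \refL{Lglx} one has $\gl_1\=\inf_{\mu>0}\glx(\mu)>0$, so for $\gl<\gl_1$ the characteristic equation has no positive root and $\egl=\{0\}$. The existence of a single threshold $\gl_0\ge\gl_1$ is then deduced from the monotonicity results \refT{TGA} and \refT{Tmono}.

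Your route via $f(\mu)=\mu^2/\bigpar{2\log\phis(\mu)}$ is more self-contained for the statement at hand: it produces the threshold $\gl_0=\inf_{\mu>0} f(\mu)$ explicitly and needs no appeal to \refT{TGA} or \refT{Tmono}. The trade-off is that the paper's approach ties directly into $\glx$ and the characteristic equation, which are reused in the finer analysis of \refT{Tmux00} and \refT{Tphase}. One small slip: your claim that $\gl_0$ is \emph{attained} is not justified and in fact can fail (for $\cS=\{0,2\}$ one has $f(\mu)>1$ for all $\mu>0$ yet $f(\mu)\to1$ as $\mu\to0$); but this is harmless, since you only use $\gl_0>0$, which does follow from continuity of $f$ together with the positive limiting behaviour at both ends.
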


\begin{proof}%[Proof of \refT{Tmux0}]
\pfitem{i}
Trivial, since $0\notin\egl$ in this case.

\pfitem{ii}
In this case, $\phis(0)=\phis'(0)=1$ and thus \eqref{ps1} shows that
$$
\frac d{d\mu} \psisx(\mu;\gl)\to1>0
$$
as $\mu\to0$;
hence $\psis(\mu;\gl)$ is
increasing for small $\mu$ and 0 is not a maximum point. By \refT{Tmax}(ii),
$0\notin\eglo$.

\pfitem{iii}
By \refL{Lglx},
$\glx(\mu)$ tends to $\infty$ as
$\mu\to\infty$, and to either $1$ or
$\infty$ as $\mu\to0$. Hence $\gl_1\=\inf_{\mu>0}\glx(\mu)>0$.
If $\gl<\gl_1$, there is thus no positive solution to \eqref{chareq},
so $\egl=\set0$ and $\mux=0$. The existence of $\gl_0\ge\gl_1$ as
asserted now follows from \refT{TGA} and \refT{Tmono}.
\end{proof}

In case (iii), $\muxmin(\gl_0)=0$ by \refT{Tmono};
it is possible both that $\eglox{\gl_0}=\set0$ so that
$\muxmax(\gl_0)=0$, %(\refE{Eeven}),
and that $|\eglox{\gl_0}|>1$ so that $\glo\in\Lex$ and
$\muxmax(\gl_0)>0$. % (\refE{E03}).
We can classify these subcases too.

\begin{theorem}
  \label{Tmux00}
Suppose that\/ $0\in\cS$ and $1\notin\cS$.
  \begin{thmenumerate}
\item
If\/ $2\in\cS$ and $3\notin\cS$
($\cS=\set{0,2,s,\dots}$ with $s\ge4$, or \set{0,2}), 
then 
$\mux(\gl)=0$ for $\gl\le\glo=1$ and $\mux(\gl)>0$ for $\gl>1$, with
$\muxmax(1)=0$ and thus $1\notin\Lex$ and $\mux(\gl)\downto0$ as
$\gl\downto1$. 
\item
If\/ $2\in\cS$ and $3\in\cS$, 
or if $2\notin\cS$
($\cS=\set{0,2,3,\dots}$ 
or \set{0,s,\dots} with $s\ge3$), 
then there exists $\glo>0$ such that
$\mux(\gl)=0$ for $\gl<\glo$ and $\mux(\gl)>0$ for $\gl>\glo$, with
$\muxmax(\glo)>0=\muxmin(\glo)$ and thus $\glo\in\Lex$ and 
$\lim_{\gl\downto\glo}\mux(\gl)>0$.
  \end{thmenumerate}
\end{theorem}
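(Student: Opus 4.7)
My plan is to use, for both cases, the characterization from Theorem \ref{Tmax}(ii) that $\eglo$ is the set of global maximizers of $\psisx(\mu;\gl)=\log\phis(\mu)-\mu^2/(2\gl)$ on $[0,\infty)$, together with $\psisx(0;\gl)=0$ (since $0\in\cS$). Thus $\mux(\gl)=0$ iff $0$ is the unique maximizer, whereas $\muxmax(\gl)>0$ iff the maximum value is attained also at some positive point.

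\emph{Case (i).} Since $1,3\notin\cS$, $\phis(\mu)\le e^\mu-\mu-\mu^3/6$; the decisive analytic step is
\[
e^\mu-\mu-\mu^3/6<e^{\mu^2/2}\qquad(\mu>0),
\]
which gives $\psisx(\mu;1)<0$ for $\mu>0$, hence $\eglox{1}=\{0\}$, $\muxmax(1)=0$, and $1\notin\Lex$. To prove the inequality, I would set $h(\mu):=e^{\mu^2/2}-e^\mu+\mu+\mu^3/6$ and check that $h(0)=h'(0)=h''(0)=h'''(0)=0$ together with
\[
h^{(4)}(\mu)=e^{\mu^2/2}(3+6\mu^2+\mu^4)-e^\mu>0\quad\text{for all }\mu\ge 0,
\]
since $e^{\mu-\mu^2/2}$ attains its maximum $e^{1/2}<3\le 3+6\mu^2+\mu^4$ at $\mu=1$. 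Four successive integrations give $h>0$ on $(0,\infty)$. As $\partial_\gl\psisx=\mu^2/(2\gl^2)\ge 0$, $\psisx(\mu;\gl)\le\psisx(\mu;1)<0$ for $\mu>0$ and $\gl\le 1$, so $\mux(\gl)=0$. For $\gl>1$, the Taylor expansion $\psisx(\mu;\gl)=\tfrac{\gl-1}{2\gl}\mu^2+O(\mu^4)$ is positive for small $\mu>0$, so $\mux(\gl)>0$. Theorem \ref{Tmono}(i) then yields $\mux(\gl)\downto\muxmax(1)=0$ as $\gl\downto 1$.

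\emph{Case (ii).} Theorem \ref{Tmux0}(iii) already supplies $\glo\in(0,\infty)$, so only $\muxmax(\glo)>0$ needs proof. Suppose for contradiction $\muxmax(\glo)=0$; then $\eglox{\glo}=\{0\}$ and, by Theorem \ref{Tmono}(i), $\mux(\gl)\downto 0$ along some sequence $\gl_n\downto\glo$ with $\mux(\gl_n)>0$. From $\glx(\mux(\gl_n))=\gl_n\to\glo$ and $\mux(\gl_n)\to 0^+$, continuity of $\glx$ forces $\glo=\lim_{\mu\to 0^+}\glx(\mu)$. By Lemma \ref{Lglx} this limit equals $\infty$ when $2\notin\cS$ (impossible since $\glo<\infty$) and $1$ when $2,3\in\cS$. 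In the latter subcase $\glo=1$ would force $\psisx(\mu;1)\le 0$ on $[0,\infty)$, but $\phis(\mu)=1+\mu^2/2+\mu^3/6+O(\mu^4)$ gives $\psisx(\mu;1)=\mu^3/6+O(\mu^4)>0$ for small $\mu>0$, a contradiction. Hence $\muxmax(\glo)>0$ and $\glo\in\Lex$. Continuity in $\gl$ sends $\psisx(\mu;\gl)\le 0$ (valid for $\gl<\glo$) to $\psisx(\mu;\glo)\le 0$, so $0\in\eglox{\glo}$ and $\muxmin(\glo)=0$. Finally $\lim_{\gl\downto\glo}\mux(\gl)=\muxmax(\glo)>0$ by Theorem \ref{Tmono}(i).

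\emph{Main obstacle.} The only substantial work is the scalar inequality $e^\mu-\mu-\mu^3/6<e^{\mu^2/2}$ in Case (i). Termwise comparison of the series for $\phis$ and $e^{\mu^2/2}$ fails because $\cS$ may contain odd integers $\ge 5$; the four-derivative argument above, leveraging $e^{\mu-\mu^2/2}\le e^{1/2}$, is the cleanest route I see.
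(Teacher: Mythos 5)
Your proof is correct, and it takes a genuinely different route from the paper's. For part (i), the paper establishes that $\glx(\mu)>1$ for \emph{all} $\mu>0$ via \refL{L024}, whose proof combines a numerical check (by \maple, for $0<\mu<3.38$) with a Chernoff bound for the Poisson distribution. You instead show directly that $\psisx(\mu;1)<0$ for $\mu>0$ via the scalar inequality $\phis(\mu)\le e^\mu-\mu-\mu^3/6<e^{\mu^2/2}$, established through a clean fourth-derivative argument using $e^{\mu-\mu^2/2}\le e^{1/2}<3$. This is a nice alternative: it avoids the numerical verification entirely and establishes the weaker fact $\eglox{1}=\set0$ (rather than the stronger $\eglx1=\set0$ of \refL{L024}), which is all the theorem needs. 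For part (ii), the paper argues directly that $\glx$ decreases near $0$ --- by computing the expansion $\glx(\mu)=1-\frac12\mu+\dots$ when $2,3\in\cS$, or by noting $1/\glx$ is analytic with $1/\glx(0+)=0$ when $2\notin\cS$ --- and then invokes \refT{Tlex}(i). You instead give an indirect argument, assuming $\muxmax(\glo)=0$ and deriving from \refT{Tmono}(i) and \refL{Lglx} that $\glo=\lim_{\mu\to0+}\glx(\mu)$, which is $\infty$ (contradiction) or $1$; in the latter subcase the small-$\mu$ expansion $\psisx(\mu;1)=\tfrac16\mu^3+\O(\mu^4)>0$ contradicts $\eglox1=\set0$. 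Both routes are valid; the paper's is slightly more direct and reuses the $\glx$-machinery (notably \refT{Tlex}(i)) developed in \refS{Smu}, while yours trades that for the contradiction scaffold and is arguably more elementary in part (i). One small cosmetic note: in part (ii) what you call ``continuity of $\glx$'' is really the existence of $\lim_{\mu\to0+}\glx(\mu)$ from \refL{Lglx}, since $\glx$ is not defined at $0$; the argument is unaffected.
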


\begin{proof}
  \pfitem{ii}
If $2,3\in\cS$, then the Taylor series $\phis(\mu)=1+\frac12\mu^2+\dots$
and $\phis'(\mu)=\mu+\frac12\mu^2+\dots$ yield
$\glx(\mu)=1-\frac12\mu+\dots$ for small $\mu$. 
If $2\notin\cS$, then $1/\glx(\mu)\to0$ as $\mu\to0$ by \refL{Lglx}.
In both cases, $1/\glx$ is analytic in a neighbourhood of 0 and
increases on an interval $(0,\mu_0)$, so $\glx$ decreases there and
the result follows by
Theorems \ref{Tlex}(i) and \ref{Tmux0}(iii).
\pfitem{i}
Taylor expansions as in the proof of (ii) show that 
$\glx(\mu)=1+\frac13\mu^2+\dots$ (when $4\in\cS$) or $\glx(\mu)=1+\frac12\mu^2+\dots$
(when $4\notin\cS$) so $\glx$ increases for small $\mu$, say in an interval
$(0,\mu_0)$. By \refL{Lpsi}, $\psis$ increases in $(0,\mu_0)$,
and since $\psis$ is continuous at 0 we have $\psis(\mu)>\psis(0)$ for
$0<\mu<\mu_0$. 

However, we also have to consider larger $\mu$, and we use \refL{L024}
below which implies that $\gl_1\=\inf_{\mu\ge\mu_0}\glx(\mu)>1$.
It follows that if $\gl\le1$, then $\egl=\set0$, 
and in particular $\muxmax(1)=0$. Similarly,
if $1<\gl<\gl_1$,
then $\egl=\set{0,\mu}$ for the unique $\mu\in(0,\mu_0)$ with
$\glx(\mu)=\gl$; in this case, 
$\psis(\mu)>\psis(0)$ and we have $\mux=\mu>0$. Finally,
by \refT{Tmono}, 
$\mux(\gl)\downto\muxmax(1)=0$ as $\gl\downto1$.
\end{proof}

\begin{lemma}
  \label{L024}
Under the assumptions $0,2\in\cS$ and $1,3\notin\cS$ of
\refT{Tmux00}(i),
$\glx(\mu)>1$ for every $\mu>0$.
\end{lemma}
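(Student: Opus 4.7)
The plan is to reduce the claim $\glx(\mu)>1$ to the pointwise estimate $\mu\phis(\mu)>\phis'(\mu)$ (equivalent by \eqref{glx}) and then exploit the hypotheses $0,2\in\cS$, $1,3\notin\cS$ to isolate a ``free'' $\mu^3/2$. Writing $\cS=\set{0,2}\cup S$ with $S\subseteq\set{4,5,\dots}$, a direct expansion via \eqref{phis} gives
\begin{equation*}
\mu\phis(\mu)-\phis'(\mu)=\frac{\mu^3}{2}+\sum_{l\in S}\frac{\mu^{l-1}(\mu^2-l)}{l!},
\end{equation*}
because the $j=0$ and $j=2$ contributions to $\mu\phis(\mu)$ cancel the linear term of $\phis'(\mu)$. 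So it suffices to show, uniformly in $\mu>0$ and $S\subseteq\set{4,5,\dots}$, that $\Sigma_S(\mu):=\sum_{l\in S}\mu^{l-1}(l-\mu^2)/l!<\mu^3/2$. The $l$-th summand is nonnegative exactly when $l\geq\mu^2$, so the supremum of $\Sigma_S$ over $S$ is attained at $S_*:=\set{l\geq 4:l>\mu^2}$, and only this worst case need be checked.

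Writing $S_*=\set{L,L+1,\dots}$, one has $L=4$ for $\mu<2$ and $L\geq 5$ for $\mu\geq 2$. With $R_k(\mu):=\sum_{l\geq k}\mu^l/l!$, a telescoping computation gives
\begin{equation*}
\Sigma_{S_*}(\mu)=R_{L-1}(\mu)-\mu R_L(\mu)=R_L(\mu)(1-\mu)+\frac{\mu^{L-1}}{(L-1)!}.
\end{equation*}
For the range $\mu\leq 2$ (where $L=4$ and the above is $R_4(\mu)(1-\mu)+\mu^3/6$) I split into two subcases: if $1<\mu\leq 2$, then $R_4(\mu)(1-\mu)\leq 0$ and the remaining $\mu^3/6<\mu^3/2$ is immediate; if $\mu\leq 1$, the elementary term-by-term bound $R_4(\mu)\leq\mu^4 e^\mu/24$ (valid because $(m+1)(m+2)(m+3)(m+4)\geq 24$ for every $m\geq 0$) combined with $\mu(1-\mu)\leq 1/4$ and $e^\mu\leq e$ on $\oi$ yields
\begin{equation*}
R_4(\mu)(1-\mu)\leq\frac{\mu^4 e^\mu(1-\mu)}{24}\leq\frac{e\mu^3}{96}<\frac{\mu^3}{3},
\end{equation*}
so $\Sigma_{S_*}(\mu)<\mu^3/3+\mu^3/6=\mu^3/2$.

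For $\mu>2$ we have $L\geq 5$ and $\mu<\sqrt{L}$, and because $\mu>1$ the term $R_L(\mu)(1-\mu)$ is negative, so it is enough to show $\mu^{L-1}/(L-1)!<\mu^3/2$, i.e.\ $2\mu^{L-4}<(L-1)!$. Using $\mu^{L-4}<L^{(L-4)/2}$, this reduces to $2L^{(L-4)/2}\leq(L-1)!$ for every $L\geq 5$, which I would prove by induction on $L$: the base $L=5$ is $2\sqrt{5}<24=4!$, and the inductive step reduces (after squaring and dividing by $L^{L-3}$) to $(1+1/L)^{L-3}\leq L$, which holds since $(1+1/L)^{L-3}<e<L$ for $L\geq 3$. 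The main obstacle is this large-$\mu$ regime: the crude truncation $S\mapsto\set{l\geq 4}$ that handles $\mu\leq 2$ fails past $\mu\approx 3.3$ (since $e^\mu$ then outgrows $1+\mu+\mu^2/2+\mu^3/2$), so one must exploit that $S_*$ contains only indices $l>\mu^2$ in order to recover enough factorial savings.
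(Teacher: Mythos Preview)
Your argument is correct and takes a genuinely different route from the paper's proof. The paper proceeds by bounding $\phis'(\mu)\le e^\mu-1-\tfrac12\mu^2$ and $\phis(\mu)\ge 1+\tfrac12\mu^2$, then verifying numerically (via \maple) that $e^\mu-1-\tfrac12\mu^2<\mu(1+\tfrac12\mu^2)$ on $(0,3.38)$; for $\mu\ge3$ the paper splits $\phis'$ at $K=\floor{\mu^2}$, handles $k\le K$ by a direct comparison with $\mu\phis(\mu)$, and bounds the tail $k>K$ by a Poisson Chernoff estimate showing it is below $1$. Your approach instead identifies the extremal set $S_*=\{l\ge4:l>\mu^2\}$ once and for all, collapses the relevant sum via the telescoping identity $\Sigma_{S_*}(\mu)=R_L(\mu)(1-\mu)+\mu^{L-1}/(L-1)!$, and closes with an elementary factorial inequality $2L^{(L-4)/2}\le(L-1)!$ by induction. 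This is more self-contained: it avoids both the numerical check and the Chernoff bound, and in fact proves the lemma uniformly over all admissible $\cS$ in one stroke. The paper's proof, on the other hand, is quicker to write down in the small-$\mu$ range (crude envelope bounds plus a computer check) and uses standard probabilistic machinery in the tail. A minor cosmetic point: your case split at $\mu=2$ is slightly inconsistent in wording (you declare $L\ge5$ for $\mu\ge2$ but then treat $\mu\le2$ with $L=4$); since the $l=4$ term vanishes at $\mu=2$, either assignment gives the same sum, so this does not affect correctness.
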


\begin{proof}
By \eqref{glx}, the claimed inequality is equivalent to
$\phis'(\mu)<\mu\phis(\mu)$, where 
$\phis'(\mu)=\sum_{k\in\cS} \mu^{k-1}/(k-1)!$. 
First, we use the trivial estimates
\begin{equation*}
  \phis'(\mu)\le\phi_{\bbZo\setminus\set{1,3}}'(\mu)
=e^\mu-1-\tfrac12\mu^2
\end{equation*}
and
\begin{equation*}
  \phis(\mu)\ge 1+\tfrac12\mu^2.
\end{equation*}
We may verify numerically (by \maple, or otherwise) that 
$e^\mu-1-\frac12\mu^2<\mu(1+\frac12\mu^2)$ for $0<\mu<3.38$, and thus the claim
holds in this range.

For larger $\mu$, we split the sum for $\phis'(\mu)$ into two
parts. With $K\=\floor{\mu^2}$, we have that
\begin{equation}\label{l024a}
  \begin{split}
  \sum_{k\in\cS,\;k\le K}&\frac{\mu^{k-1}}{(k-1)!}
\le \mu+\sum_{k\in\cS,\;4\le k\le K} \frac k{\mu}\cdot \frac{\mu^{k}}{k!}
\le \mu+\frac{K}{\mu}\sum_{k\in\cS,\; k\ge 4}  \frac{\mu^{k}}{k!}
\\&
=\mu+\frac{K}{\mu}\bigpar{\phis(\mu)-1-\tfrac12\mu^2}
%\le\mu+\mu\bigpar{\phis(\mu)-1-\tfrac12\mu^2}
\\&
\le\mu\phis(\mu)-\tfrac12\mu^3.	
  \end{split}
\end{equation}
For $k>K$ we use the Chernoff bound for the Poisson distribution,
see \eg{} \cite[Theorem 2.1 and Remark 2.6]{JLR}:
\begin{equation}\label{l024b}
  \begin{split}
\sum_{k\in\cS,\;k\ge K+1}\frac{\mu^{k-1}}{(k-1)!}
&\le
\sum_{k=K}^\infty\frac{\mu^{k}}{k!}	
=e^\mu\P\bigpar{\Po(\mu)\ge K}
\\&
\le
\exp\bigpar{\mu-K\log(K/\mu)+K-\mu}
=\parfrac{e\mu}{K}^K.
  \end{split}
\end{equation}
For $\mu\ge3$, $K/\mu=\floor{\mu^2}/\mu>9/\sqrt{10}>e$, so the sum in
\eqref{l024b} is less than 1. We combine this with \eqref{l024a} to obtain
$\phis'(\mu)<\mu\phis(\mu)$ for $\mu \ge 3$.
\end{proof}

\begin{theorem}
  \label{Tphase}
The set \set{\gl:\mux \text{ is not analytic at }\gl} of phase
transitions is at most countable.
Each phase transition is of one of the following types.
\begin{romenumerate}
  \item
A jump discontinuity: $\gl\in\Lex$ and $\muxmin(\gl)<\muxmax(\gl)$.
\item
A continuous phase transition with $\muxmin(\gl)=\muxmax(\gl)=0$ but
$\mux(\gl')>0$ for $\gl'>\gl$. This can happen only at $\gl=1$, where it
happens if and only if $0,2\in\cS$ but $1,3\notin\cS$.
\item
A continuous phase transition with $\mux>0$; in this case,
$\gl=\glx(\mu)$ for some $\mu>0$ with $\glx'(\mu)=0$ but $\glx$
increasing in a neighbourhood of $\mu$; thus $\mu$ is  an
inflection point of $\glx$.
\end{romenumerate}
\end{theorem}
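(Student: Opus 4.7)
The plan is to catalogue, for each $\gl^*$ at which $\mux$ fails to be analytic, the mechanism of failure. I would begin by splitting on whether $\gl^*\in\Lex$: if so, \refT{Tmono}(i) produces the jump from $\muxmin(\gl^*)$ to $\muxmax(\gl^*)$, which is case (i). So assume $\gl^*\notin\Lex$; then $\mu^*:=\mux(\gl^*)$ is uniquely determined and $\mux$ is continuous at $\gl^*$ by \refT{Tmono}(i).

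If $\mu^*=0$, I would invoke \refT{Tmux0}: parts (i) and (ii) give $\mux>0$ everywhere, contradicting $\mu^*=0$, so we are in the regime of \refT{Tmux0}(iii) and, since $\mux\equiv 0$ is analytic on $(0,\glo)$, non-analyticity forces $\gl^*=\glo$. Among the sub-cases of \refT{Tmux00}, sub-case (ii) has $\muxmax(\glo)>0$ whence $\glo\in\Lex$, contradicting our assumption; sub-case (i), which requires $0,2\in\cS$ and $1,3\notin\cS$ and gives $\glo=1$, is exactly case (ii) of the present theorem.

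The substantive case is $\mu^*>0$. Since $\glx$ is analytic on $(0,\infty)$ by \refL{Lglx}, whenever $\glx'(\mu^*)\neq 0$ the analytic implicit function theorem yields a local analytic inverse that must coincide with $\mux$ near $\gl^*$ (by uniqueness of the branch, since $\eglx{\gl}$ is discrete), which would make $\mux$ analytic at $\gl^*$---contradiction. Hence $\glx'(\mu^*)=0$. Because $\glx'$ is analytic and not identically zero (as $\glx(\mu)\to\infty$), $\mu^*$ is an isolated zero. I would then exclude the possibility that $\mu^*$ is a local extremum of $\glx$: if $\glx$ had a local strict maximum at $\mu^*$, then $\glx^{-1}(\gl)$ would be empty in some deleted neighbourhood $(\mu^*-\eps,\mu^*+\eps)\setminus\set{\mu^*}$ for $\gl\in(\gl^*,\gl^*+\delta)$, forcing $\mux(\gl)\ge\mu^*+\eps$ by monotonicity (\refT{TGA}); this gives $\muxmax(\gl^*)=\lim_{\gl\downto\gl^*}\mux(\gl)\ge\mu^*+\eps$ whereas $\muxmin(\gl^*)\le\mu^*$, placing $\gl^*\in\Lex$ and yielding a contradiction. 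The local minimum case is symmetric using $\gl\upto\gl^*$. Therefore the isolated zero $\mu^*$ of $\glx'$ has odd order and $\glx$ is increasing across $\mu^*$, so $\mu^*$ is an inflection point---case (iii).

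For countability, the phase-transition set is contained in $\Lex\cup\set{1}\cup\glx\bigpar{\set{\mu>0:\glx'(\mu)=0}}$: the first summand is countable by \refT{Tcountable}, the second is a singleton, and the third is the image of the discrete zero set of the analytic, non-constant function $\glx'$. The main obstacle I anticipate is the extremum-exclusion step in the $\mu^*>0$ case, where the local behaviour of $\glx^{-1}$ must be converted, via monotonicity and \refT{Tmono}(i), into membership of $\gl^*$ in $\Lex$.
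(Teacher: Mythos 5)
Your proof follows essentially the same route as the paper's (split on $\gl^*\in\Lex$; for $\mu^*=0$, quote Theorems \ref{Tmux0} and \ref{Tmux00}; for $\mu^*>0$, use the implicit function theorem to force $\glx'(\mu^*)=0$, then rule out the bad behaviours of $\glx$ near $\mu^*$; count transitions via $\Lex$, the possible point at $1$, and the zeros of $\glx'$). Two slips in the $\mu^*>0$ step should be corrected.

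First, the parity claim is reversed. Having excluded a local extremum at $\mu^*$, the analytic function $\glx'$ does \emph{not} change sign there, so its isolated zero has \emph{even} order, not odd. (Equivalently, $\glx-\gl^*$ has odd order, which matches the paper's later remark that $\glx(\mu)-\glx(\mu_0)\sim c(\mu-\mu_0)^m$ with $m\ge 3$ odd.) Excluding extrema therefore gives $\glx$ strictly monotone near $\mu^*$, not automatically increasing, and the stated ``odd order'' would in fact re-introduce an extremum.

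Second, you have not explicitly ruled out $\glx$ decreasing through $\mu^*$ (the even-order case with the wrong sign). The paper dispatches this through \refT{Tlex}(i), which directly produces a $\gl'\in\Lex$ from any decreasing interval of $\glx$; one then checks $\gl'=\gl^*$ by squeezing with \refT{TGA}, contradicting $\gl^*\notin\Lex$. Alternatively, your own strict-local-maximum argument carries over almost verbatim: the only property it actually needs is that $\glx^{-1}(\gl)\cap(\mu^*,\mu^*+\eps)=\emptyset$ for $\gl$ in a right-neighbourhood of $\gl^*$, together with $\mux(\gl)>\mu^*$ for $\gl>\gl^*$ from \refT{TGA}, and both hold also when $\glx$ decreases through $\mu^*$. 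Either fix closes the gap and leaves you with $\glx$ strictly increasing and $\glx''$ changing sign at $\mu^*$, i.e.\ an inflection point, as required for case (iii). The countability argument and the treatment of cases (i) and (ii) are fine.
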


Examples of type (i) and (ii) are given in \refS{Sex}.
We do not know whether (iii) actually occurs.

\begin{proof}
  Since $\glx(\mux(\gl))=\gl$ when $\mux(\gl)>0$ and $\glx$ is
  analytic, the implicit 
  function theorem shows  that $\mux$ is analytic at every point where
  it is continuous and positive and $\glx'(\mux)\neq0$.
Hence we have only the three given possibilities;
in  (iii), $\glx$ has to be increasing in a neighbourhood of $\mu$
  since otherwise we would have a jump discontinuity by
  \refT{Tlex}(i).

The further characterization in (ii) follows by Theorems
\refand{Tmux0}{Tmux00}.

The number of jump discontinuities is countable by \refT{Tcountable},
and so is the number of inflection points of $\glx$, while there is at
most one phase transition of type (ii).
\end{proof}

In Case (ii), by the proof of \refT{Tmux00},
$\glx(\mu)=1+c\mu^2+\o(\mu^2)$
as $\mu\to0$, so
$\mux(\gl)\sim c'\sqrt{\gl-1}$ as $\gl\downto1$ and we have a 
square-root type singularity. In Case (iii), provided it happens at all, if
the inflection point is $\muo$, then 
$\glx(\mu)-\glx(\muo)\sim c(\mu-\mu_0)^m$ as 
$\mu\to\muo$ for some odd $m\ge3$ (presumably
$m=3$), and thus $\mux(\gl)-\muo\sim c'(\gl-\glo)^{1/m}$ as
$\gl\to\glo=\glx(\muo)$. 

Theorems \refand{Tlex}{Tphase} show that phase transitions, except the
possible one of type (ii), occur when $\glx$ ceases to be increasing
at some points, or at least almost ceases to be, in the form of an
inflection point.
(Recall that $\glx\to\infty$, so it increases in the long run.)
We have no criterion for when this happens, but it seems likely
that it occurs whenever there are large gaps in $\cS$.

\begin{problems}
Several open problems remain. For example:
  \begin{romenumerate}
\item
Is there ever any phase transition of type (iii) in \refT{Tphase}
(with an inflection point of $\glx$)?
\item
Does the set of phase transitions lack accumulation points? In other
words, if there is an infinite number of phase transitions (as in
\refE{Einfty}), can we always order them in an increasing sequence
$\gl_1<\gl_2<\dots$ with $\gl_n\to\infty$?
\item
Is $|\eglo|$ always 1 or 2? In the latter case, is always $|\egl|=3$,
as in \refE{E03}, with one intermediate point that is a minimum rather
than a maximum of $\psisx$ and $\psisxx$?
 \end{romenumerate}
\end{problems}

\section{Monotonicity}

We begin with a general result that is a simple consequence of
standard results. Recall that if $X$ and $Y$ are two
random variables, we say that $X$ is \emph{stochastically smaller}
than $Y$, 
and write $X\lest Y$, if $\P(X> x)\le\P(Y>x)$ for every real $x$; it
is well-known that this is equivalent to the existence of a coupling
$(X',Y')$ of $(X,Y)$ with $X'\le  Y'$ a.s.
See, for example, \cite[Section IV.1]{Lindv}.

\begin{lemma}\label{Lmon}
Let $Y$ be a random variable on $\bbZo$ with a \pgf{} 
$\phi_Y(z)=\sum_{i\ge0} p_i z^i$
that is finite for all $z$, and let, for $\mu>0$, $Y_\mu$ have the
conjugate (or tilted) distribution $\P(Y_\mu=k)=p_k
\mu^k/\phi_Y(\mu)$.
\begin{romenumerate}
  \item
If $f:\bbZo\to\bbR$ is a non-decreasing function such that
$\E|f(Y_\mu)|<\infty$ for every $\mu>0$, then
$\ddmu\E f(Y_\mu)\ge0$, with strict inequality except in the trivial
case when $f$ is constant on \set{k:p_k>0}.
\item
If $\mu_1\le\mu_2$ then $Y_{\mu_1}\lest Y_{\mu_2}$.
\end{romenumerate}
\end{lemma}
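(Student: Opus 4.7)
The plan is to prove (i) by differentiating under the summation sign, recognizing the derivative as a covariance, and then establishing non-negativity by the standard independent-copies trick. Part (ii) will follow from (i) by specializing $f$ to indicator functions of tail events.

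For (i), note that $\phi_Y$ is entire (since it is finite everywhere), so termwise differentiation of
\begin{equation*}
\E f(Y_\mu)=\frac{1}{\phi_Y(\mu)}\sum_{k\ge0}f(k)p_k\mu^k
\end{equation*}
is legitimate for $\mu>0$; the growth hypothesis $\E|f(Y_\mu)|<\infty$ for all $\mu>0$ justifies similarly differentiating the numerator. A short calculation gives
\begin{equation*}
\ddmu\E f(Y_\mu)=\frac{1}{\mu}\Bigpar{\E\bigpar{Y_\mu f(Y_\mu)}-\E(Y_\mu)\E(f(Y_\mu))}=\frac{1}{\mu}\Cov\bigpar{Y_\mu,f(Y_\mu)}.
\end{equation*}
The point is that both factors are non-decreasing functions of the single random variable $Y_\mu$, so the covariance is non-negative.

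To verify non-negativity rigorously (and identify the equality case), I would introduce an independent copy $Y_\mu'$ of $Y_\mu$ and use the standard identity
\begin{equation*}
2\Cov\bigpar{Y_\mu,f(Y_\mu)}=\E\bigsqpar{(Y_\mu-Y_\mu')\bigpar{f(Y_\mu)-f(Y_\mu')}}.
\end{equation*}
Since $f$ is non-decreasing, $(y-y')(f(y)-f(y'))\ge0$ for all real $y,y'$, so the expectation is $\ge0$. If $f$ is non-constant on the support $\set{k:p_k>0}$, then there exist $k_1<k_2$ in this support with $f(k_1)<f(k_2)$, and the event $\set{Y_\mu=k_1,Y_\mu'=k_2}$ has positive probability and strictly positive integrand, so the expectation is strictly positive, giving $\ddmu\E f(Y_\mu)>0$.

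For (ii), fix $x\in\bbR$ and apply (i) to the non-decreasing bounded function $f(k)=\mathbf 1\set{k>x}$, which trivially satisfies $\E|f(Y_\mu)|\le1<\infty$. Then $\mu\mapsto\P(Y_\mu>x)=\E f(Y_\mu)$ is non-decreasing, so $\P(Y_{\mu_1}>x)\le\P(Y_{\mu_2}>x)$ whenever $\mu_1\le\mu_2$; this is precisely the definition of $Y_{\mu_1}\lest Y_{\mu_2}$. The only step that requires any real care is the interchange of differentiation and summation, which is handled by the entirety of $\phi_Y$ together with the integrability hypothesis on $f(Y_\mu)$; everything else is essentially the FKG-type observation that comonotone functions have non-negative covariance.
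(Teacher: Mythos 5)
Your argument is correct and is essentially identical to the paper's own proof: both differentiate to recognize $\mu\,\dddmu\E f(Y_\mu)$ as $\Cov\bigpar{f(Y_\mu),Y_\mu}$, establish non-negativity (and the strict-inequality case) via the independent-copy identity $\E\bigsqpar{(Y_\mu-Y_\mu')(f(Y_\mu)-f(Y_\mu'))}\ge0$, and deduce (ii) by specializing $f$ to tail indicators. No substantive differences.
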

\begin{proof}
\pfitem{i}
  \begin{equation*}
	\begin{split}
\mu\ddmu \E f(Y_\mu)	  
&=
\mu\ddmu \frac{\sum_kf(k)p_k\mu^k}{\sum_kp_k\mu^k}
\\&
=
\frac{\sum_kkf(k)p_k\mu^k}
{\sum_kp_k\mu^k}
-\frac{\bigpar{\sum_kf(k)p_k\mu^k}
\bigpar{\sum_kkp_k\mu^k}}
{\bigpar{\sum_kp_k\mu^k}^2}
\\&
=\E \bigpar{Y_\mu f(Y_\mu)}-\E (Y_\mu)\E \bigpar{f(Y_\mu)}
=\Cov\bigpar{f(Y_\mu),Y_\mu)}
\ge0,
	\end{split}
  \end{equation*}
since, as is well-known, 
the two non-decreasing functions $f(Y_\mu)$ and $Y_\mu$ 
of $Y_\mu$ are
positively correlated, for example by a calculation of 
$\E\big[(f(Y_\mu)-f(Y_\mu'))(Y_\mu-Y_\mu')\big]\ge0$ with $Y_\mu'$
an independent copy of $Y_\mu$.
The same proof yields strict inequality if $f(j)\neq f(k)$ for some
$j,k$ with $p_j,p_k>0$.
\pfitem{ii}
By (i), $\P(Y_\mu>x)$ is a non-decreasing function of $\mu$ for every
$x$.
\end{proof}

Let, for $\mu>0$, $\xmu\sim\Po(\mu)$ and $\xmus\sim\PoS(\mu)$.
Applying \refL{Lmon} to $X_{1,\cS}$, we obtain the following.
\begin{theorem}\label{Tmonmu}
\begin{thmenumerate}
  \item
If $f:\bbZo\to\bbR$ is a non-decreasing function such that
$\E|f(X_\mu)|<\infty$ for every $\mu>0$, then
\begin{equation*}
\ddmu\E f(\xmus)
=
\ddmu\E \bigpar{f(\xmu)\mid\xmu\in\cS}
\ge0,  
\end{equation*}
with strict inequality except in the trivial
case when $f$ is constant on $\cS$.
\item
If $\mu_1\le\mu_2$ then $X_{\mu_1,\cS}\lest X_{\mu_2,\cS}$.
\end{thmenumerate}
\end{theorem}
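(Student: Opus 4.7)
The plan is exactly as the parenthetical remark before the theorem suggests: recognize the family $\{\PoS(\mu)\}_{\mu>0}$ as the exponential (tilted) family generated by the single variable $Y := X_{1,\cS}$, and then invoke \refL{Lmon} directly.

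First I would set $Y := X_{1,\cS}$, so that $Y$ takes values in $\bbZo$ with mass function $p_k := \P(Y=k) = 1/(k!\,\phis(1))$ for $k\in\cS$ and $p_k=0$ otherwise. Its \pgf{} is
\[
\phi_Y(z) = \sum_{k\in\cS} \frac{z^k}{k!\,\phis(1)} = \frac{\phis(z)}{\phis(1)},
\]
which is entire in $z$ since $\phis$ is, so the finiteness hypothesis of \refL{Lmon} is satisfied. Next I would compute, for $\mu>0$, the tilted variable $Y_\mu$ from \refL{Lmon}:
\[
\P(Y_\mu = k) = \frac{p_k\,\mu^k}{\phi_Y(\mu)} = \frac{\mu^k/k!}{\phis(\mu)}, \qquad k\in\cS,
\]
which by \eqref{pos} is exactly the $\PoS(\mu)$ distribution. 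Hence $\xmus \eqd Y_\mu$.

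With this identification in hand, part (ii) is immediate from \refL{Lmon}(ii). For part (i), I would transfer the integrability hypothesis: if $\E|f(X_\mu)|=e^{-\mu}\sum_k|f(k)|\mu^k/k!<\infty$, then $\sum_{k\in\cS}|f(k)|\mu^k/k!<\infty$, and dividing by $\phis(\mu)$ gives $\E|f(Y_\mu)|=\E|f(\xmus)|<\infty$, so that \refL{Lmon}(i) applies and yields the monotonicity. The strict-inequality clause transfers cleanly because $\{k : p_k>0\}=\cS$, so being constant on $\{p_k>0\}$ is the same as being constant on $\cS$. There is essentially no obstacle here beyond the elementary identification of $\PoS(\mu)$ as the tilt of $\PoS(1)$ by $\mu$; once this is observed, both claims are a direct corollary of the preceding lemma.
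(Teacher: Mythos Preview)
Your proposal is correct and follows exactly the paper's own approach: the paper's proof is the single line ``Applying \refL{Lmon} to $X_{1,\cS}$, we obtain the following,'' and you have simply spelled out the details of that application. Your verification that $Y_\mu\eqd\xmus$, that the pgf is entire, and that the integrability and strict-inequality clauses transfer are all accurate elaborations of this one-line reduction.
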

This shows, in conjunction with \refT{T1}, that the asymptotic degree
distribution of $\gnlns$ is stochastically increasing in $\mux(\gl)$
and thus, by \refT{TGA}, in $\gl$. In particular, the asymptotic edge
density, which is given by $\E X_{\mux(\gl),\cS}=\nu(\mux(\gl))$, 
is an increasing function of $\gl$
(except that it is constant in the trivial case $|\cS|=1$).
This holds for finite $n$ too.

\begin{theorem}
  \label{Tmone}
If\/ $0<p_1\le p_2< 1$, then $\edg(\gnxs{p_1})\lest\edg(\gnxs{p_2})$.
\end{theorem}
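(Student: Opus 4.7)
The plan is to recognise $\edg(\gnxs{p})$ as a conjugate (tilted) distribution in the sense of \refL{Lmon}, with natural parameter $\theta\=p/(1-p)$, and then invoke \refL{Lmon}(ii). First, I would let $N_m$ denote the number of \ssubgraph{s} of $K_n$ with exactly $m$ edges; note that $N_m=0$ for $m>\binom n2$, and that $C\=\sum_m N_m\ge 1$ by our standing assumption that at least one \sgraph{} on $n$ vertices exists. Grouping the sum in the denominator of \eqref{gnps} according to the number of edges yields
\[
  \P(\edg(\gnxs{p})=m)=\frac{N_m p^m(1-p)^{\binom n2-m}}{\sum_{m'}N_{m'}p^{m'}(1-p)^{\binom n2-m'}}=\frac{N_m\theta^m}{\sum_{m'}N_{m'}\theta^{m'}}.
\]

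Next I would introduce the base random variable $Y$ on $\bbZo$ with pmf $p_m\=N_m/C$; only finitely many $p_m$ are non-zero, so its \pgf{} $\phi_Y(z)=C\qw\sum_m N_m z^m$ is a polynomial, in particular finite for every $z$, and the hypotheses of \refL{Lmon} are satisfied. The displayed expression above coincides with the conjugate distribution $\P(Y_\theta=m)=p_m\theta^m/\phi_Y(\theta)$ from \refL{Lmon}. Since $p\mapsto\theta=p/(1-p)$ is strictly increasing and positive on $(0,1)$, the assumption $0<p_1\le p_2<1$ translates into $0<\theta_1\le\theta_2$, and \refL{Lmon}(ii) gives $\edg(\gnxs{p_1})\eqd Y_{\theta_1}\lest Y_{\theta_2}\eqd\edg(\gnxs{p_2})$, which is the desired stochastic inequality.

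There is no genuine obstacle: the whole argument reduces to the observation that conditioning an independent-edge model on the event $\set{\nn(\gnp)\in\cNS}$ preserves the exponential-family structure in the parameter $\theta$, and that the relevant \pgf{} is automatically entire because $Y$ has finite support, so no tail hypothesis on $\phi_Y$ needs to be verified by hand.
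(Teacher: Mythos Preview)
Your proof is correct and is essentially the same as the paper's: the paper observes in one line that $\edg(\gnps)$ has the conjugate distribution $Y_{p/(1-p)}$ with $Y=\edg(\gnxs{1/2})$ and then invokes \refL{Lmon}, which is exactly your argument with the base variable $Y$ made explicit (your $Y$ with pmf $N_m/C$ is precisely the edge count of the uniform random \sgraph{} $\gnxs{1/2}$). You have simply written out the details that the paper leaves to the reader.
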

\begin{proof}
This is another application of \refL{Lmon}, since it follows from
  \eqref{gnps} that $\edg(\gnps)$ has the conjugate distribution
  $Y_{p/(1-p)}$ with $Y=\edg(\gnxs{\frac12})$.
\end{proof}

Unfortunately, if we consider the entire random graph $\gnps$ 
(and not just the number of its edges), it is
in general \emph{not} stochastically increasing in $p$.

\begin{example}
  Let $n=4$ and let $\cS=\set{0,2}$ (or the set of all even numbers).
The \sgraph{s} are, ignoring the labelling: 
(i) $E_4$, the empty graph with no edges, (ii)
$C_3+E_1$, a 3-cycle plus an isolated vertex, (iii) $C_4$, a 4-cycle.
We have $\P(\gnps=E_4)\to1$ as $p\to0$ and $\P(\gnps=C_4)\to1$ as $p\to1$.
Hence, if $f(G)$ is the number of 3-cycles in $G$, then
$\E f(\gnps)=\P(\gnps=C_3+E_1)$ tends to 0 both as $p\to0$ and $p\to1$, so
this expectation is not monotone in $p$.
\end{example}

\begin{problem}
  Is the random multigraph $\gnmuxs$ defined in \refS{Smulti}
  stochastically increasing in $\nu$?
(Its number of edges is, by the same argument as for \refT{Tmone}.)
\end{problem}

For the existence of a giant component, we note that the crucial
quantity $Q(\mu)$ in \eqref{Qdef} is \emph{not} always monotone in
$\mu$, not even in the classical case $\cS=\bbZo$ (when
$Q(\mu)=\mu^2-\mu$).
Nevertheless, the condition $Q(\mu)>0$ is monotone.

\begin{theorem}
  \label{Tgiantmono}
If $\mu_1\le\mu_2$ and $Q(\mu_1)>0$, then $Q(\mu_2)>0$.

Moreover, 
assuming $\cS\not\subseteq\set{0,2}$,
if
$\gl_1\le\gl_2$ and thus $\mux(\gl_1)\le\mux(\gl_2)$,
then $\xix(\mux(\gl_1))\ge\xix(\mux(\gl_2))$
and $\gammax(\mux(\gl_1))\le\gammax(\mux(\gl_2))$.
Hence, if $\gnxs{\gl_1/n}$  has a giant component, then so has
$\gnxs{\gl_2/n}$ for all $\gl_2\ge\gl_1$, and it is (asymptotically)
at least as large.
\end{theorem}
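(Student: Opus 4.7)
The plan is to interpret the three quantities $Q(\mu)$, $\xix(\mu)$, and $\gammax(\mu)$ probabilistically via the Galton--Watson trees of \refR{RGW}, and then exploit the stochastic monotonicity of $\PoS(\mu)$ and $\PoSI(\mu)$ in $\mu$ provided by \refT{Tmonmu}. Combined with the monotonicity of $\mux(\gl)$ in $\gl$ from \refT{TGA} and the characterization of the giant in \refT{Tgiant}, this will deliver all three claims.

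For the first statement, I would rewrite $Q(\mu)>0$ as a statement about a single mean. Since $\phi_\SI=\phis'$, the mean of $Y_\mu\sim\PoSI(\mu)$ is $\mu\phis''(\mu)/\phis'(\mu)$, and the formula \eqref{Qdef} gives
\begin{equation*}
  Q(\mu)>0 \iff \mu\phis''(\mu)>\phis'(\mu) \iff \E Y_\mu>1.
\end{equation*}
Applying \refT{Tmonmu}(i) with $\SI$ in place of $\cS$ and $f$ the identity shows $\mu\mapsto\E Y_\mu$ is non-decreasing, so $\E Y_{\mu_1}>1$ forces $\E Y_{\mu_2}>1$ for every $\mu_2\ge\mu_1$, giving Part~1.

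To show $\xix$ is non-increasing, the first task is to identify $\xix(\mu)$, in all three cases of \refT{Tgiant}, with the extinction probability of $\cX_\mu$. Rearranging $\chimux(\xi)=0$ using $\nu(\mu)=\mu\phis'(\mu)/\phis(\mu)$ yields $\xi=\phis'(\xi\mu)/\phis'(\mu)$, which is precisely the fixed-point equation for the probability generating function of $\PoSI(\mu)$. Together with \refR{Rgiant0} and the observation that $0$ is itself a fixed point iff $1\notin\cS$, this identifies $\xix(\mu)$ as the smallest non-negative fixed point, i.e.\ as the extinction probability of $\cX_\mu$. Since $\PoSI(\mu)$ is stochastically non-decreasing in $\mu$ by \refT{Tmonmu}(ii), a generation-by-generation coupling (at every vertex dominate the smaller offspring count by the larger) exhibits $\cX_{\mu_1}$ as a subtree of $\cX_{\mu_2}$ when $\mu_1\le\mu_2$, so survival of $\cX_{\mu_1}$ entails survival of $\cX_{\mu_2}$, giving $\xix(\mu_1)\ge\xix(\mu_2)$.

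The monotonicity of $\gammax$ is then an immediate variant: by \eqref{gammax} and \refR{RGW}, $\gammax(\mu)=1-\phis(\xix(\mu)\mu)/\phis(\mu)$ equals the survival probability of the modified tree $\cXX_\mu$ whose root has $\PoS(\mu)$-distributed offspring and whose subsequent generations follow $\PoSI(\mu)$. Both $\PoS(\mu)$ and $\PoSI(\mu)$ are stochastically non-decreasing in $\mu$ (\refT{Tmonmu}), so the same coupling produces $\cXX_{\mu_1}\subseteq\cXX_{\mu_2}$, yielding $\gammax(\mu_1)\le\gammax(\mu_2)$. Applying these with $\mu_i=\mux(\gl_i)$ (using \refT{TGA}) and quoting \refT{Tgiant} — existence of a giant is equivalent to $Q(\mux)>0$ and its asymptotic relative size is $\gammax(\mux)$ — completes the proof. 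The one delicate point is the identification of $\xix$ with the extinction probability in the boundary cases (ii) and (iii) of \refT{Tgiant} (where $\xix\in\{0,1\}$); once this is settled, the coupling arguments for $\cX_\mu$ and $\cXX_\mu$ are entirely standard.
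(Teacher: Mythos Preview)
Your proof is correct and follows essentially the same approach as the paper: both recast $Q(\mu)>0$ as $\E X_{\mu,\SI}>1$ and appeal to \refT{Tmonmu}(i), and both deduce the monotonicity of $\xix$ and $\gammax$ from the branching-process interpretations in \refR{RGW} combined with \refT{Tmonmu}(ii) applied to $\cS$ and $\SI$, together with \refT{TGA}. The paper's version is terser, merely citing \refR{RGW} and \refT{Tmonmu}(ii) without spelling out the coupling or the identification of $\xix$ with the extinction probability in the boundary cases, whereas you make these points explicit.
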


\begin{proof}
  The condition $Q(\mu)>0$ is equivalent to
  $\mu\phis''(\mu)/\phis'(\mu)>1$.
Since $\phis'(\mu)=\phi_\SI(\mu)$,
\begin{equation}\label{s-1}
  \frac{\mu\phis''(\mu)}{\phis'(\mu)}
=
  \frac{\mu\phi_\SI'(\mu)}{\phi_\SI(\mu)}
=
\E X_{\mu,\SI},
\end{equation}
which is non-decreasing by \refT{Tmonmu}(i).

The monotonicity of $\xix$ and $\gammax$ follows from the branching
process interpretations in \refR{RGW} together with the stochastic
monotonicity \refT{Tmonmu}(ii) (for both $\cS$ and $\SI$) and \refT{TGA}.
\end{proof}

\begin{remark}
  \label{RS-1}
\refT{Tgiant} and \eqref{s-1} yield the curious relation that,
provided $\cS\not\subseteq\set{0,2}$, $\gnls$ has a giant component
if and only if $\nu_{\SI}(\mux)\=\E X_{\mux,\SI}>1$, with $\mux=\mux(\gl)$
calculated for $\cS$. Cf.\ Remarks \ref{RGW} and \ref{RcoreGW}.
\end{remark}

It follows similarly from \refR{RcoreGW} that the existence and size
of a $k$-core, for any fixed $k\ge3$, is monotone in $\gl$.

\section{Examples}\label{Sex} 

\begin{example}\label{EN}
  $\cS=\bbZo$, the Erd\H os--R\'enyi random graph.
We have in this much studied case that $\phis(\mu) = e^\mu$.
The characteristic equation \eqref{chareq} becomes
$\mu = \mu^2/\lambda$, with solutions $\mu=0$, $\mu=\lambda$.
By \eqref{psidef}, $\psis(\mu) = \frac12\mu$,
so $\psis(0)<\psis(\lambda)$. 
Therefore, and in accordance with \refT{Tmux0}(ii),
$\mux=\lambda$, so that
the number $n_i$ of vertices of degree $i$ satisfies
$n_i/n \to \lambda^i e^{-\lambda}/i!$, $i\ge 0$. 
This is a simple instance of \refT{Tlex}(ii).
There is no phase transition of $\gnlns$.
We have $\psisx(\mu;\gl)=\mu-\mu^2/(2\gl)$ and
$\psisxx(\mu;\gl)=\frac12{\mu}\bigpar{\log(\gl/\mu)+1}$.
Details of
the application of \refT{Tgiant} to this well understood case
may be found in \cite{MR98}. Similarly, the application
of \refT{Tcore} is described in \cite{JL}.
\end{example}

\begin{example}\label{Es}
  $\cS=\set s$, where $s \ge 1$.
We have $\phis(\mu) = \mu^s/s!$ and the characteristic equation
\eqref{chareq} becomes $s=\mu^2/\lambda$, with
solution $\mux=\sqrt{s\lambda}$. 
However, in this case, the value
of $\mux$ is in fact immaterial, since $\PoS(\mu)$ is a point mass at
$s$ for every $\mu>0$. Moreover, the graph $\gnlns$ is a random
regular graph with all vertices of degree $s$.
It is immediate
that $Q(\mu)=s(s-2)$, and so there exists a giant component if
$s>2$, and not if $s=1$.
In fact, if $s=1$, the graph consists of isolated edges only,
while if $s\ge3$, it is well-known that the graph
 is
connected with probability tending to 1, see
\citet[Section VII.6]{bollobas}. In the remaining case $s=2$, 
the graph consists of cycles, of which the largest has a
length that divided by $n$ converges to some non-degenerate
distribution on $[0,1]$, see, \eg{}, \citet{ABT}; this is thus an
exceptional case where we do not have convergence in probability of
the proportion of vertices in the giant cluster,
as in \refT{Tgiant}.
\end{example}

\begin{example}\label{Eeven}
  $\cS=2\bbZo$, the even numbers. In this case,
  \begin{equation*}
	\phis(\mu)=\sumk \frac{\mu^{2k}}{(2k)!}=\cosh \mu.
  \end{equation*}
The characteristic equation \eqref{chareq} is
\begin{equation*}
  \frac{\mu\sinh\mu}{\cosh\mu}=\frac{\mu^2}{\gl},
\end{equation*}
so either $\mu=0$ or
\begin{equation}\label{d1}
\gl=\glx(\mu)=  \frac{\mu}{\tanh\mu}.
\end{equation}

Since $\glx(\mu)=\mu/\tanh\mu$ increases (strictly) from 1 to $\infty$ for
$\mu\in[0,\infty)$, it follows that: if $\gl\le1$, $\mu=0$ is the only
  solution, while if $\gl>1$, there is also a positive solution.
We have
\begin{equation*}
  \psis(\mu)=\log(\cosh\mu)-\thalf\mu\tanh\mu.
\end{equation*}
Therefore,
\begin{equation*}
  \psis'(\mu)=\frac{\sinh(2\mu)-2\mu}{4\cosh^2\mu}>0
\end{equation*}
for $\mu>0$.
Hence, $\psis(\mu)>\psis(0)$ for $\mu>0$, whence $\mux$ is the
unique positive solution of \eqref{d1} when $\gl > 1$, \cf{}
\refL{Lpsi}, \refT{Tlex}(ii) and \refT{Tmux00}(i).

We thus have a continuous phase transition at $\gl=1$ with $\mux(1)=0$;
there is a unique $\mux$ 
(and thus \refT{T1} applies)
for every $\gl$, and $\mux(\gl)$ is a continuous function, but it is not
differentiable at $\gl=1$.
This is the only phase transition, and $\Lex=\emptyset$.

The asymptotic edge density (i.e., the number of edges per
vertex, see \eqref{t1bb})
is
\begin{equation}
  \label{evennu}
\nu(\mux)=\mux\tanh\mux.
\end{equation}

Since $1\notin\cS$, \refT{Tgiant} shows that there is a giant 
component as soon as $\mux>0$, \ie, if $\gl>1$. In fact,
it is easily seen that 
$$
Q(\mu)= \mu \tanh\mu \left(\frac\mu{\tanh\mu} -1\right).
$$

One may study a random even subgraph of a general graph $G$. It turns out
that the
random even subgraph with parameter $p\in[0,\frac12]$ is
related to the \rc\ model on $G$ with edge-parameter $2p$ and
cluster-weighting factor $q=2$. When $G$ is a planar graph, the random
even subgraph  
may be identified as the dual graph of the $+/-$ boundary of the
Ising model on the (Whitney) dual graph of $G$ with an appropriate
parameter-value. This relationship is especially fruitful when
$G$ is part of a planar lattice such as the square lattice $\bbZ^2$. 
See \cite{G-RC} for a general account of the
\rc\ model, and \cite{GJ} for its relationship with the random
even subgraph and the Ising model. 
\end{example}

\begin{example}\label{Eodd}
  $\cS=\set{1,3,5,\dots}$, the odd numbers. This time, $\phis(\mu) =\sinh \mu$,
and the characteristic equation is
$$
\frac{\mu\cosh\mu}{\sinh\mu} = \frac{\mu^2}{\gl}
$$
with $\glx(\mu)=\mu\tanh\mu$.
Since $0 \notin \cS$ and $\glx$ is increasing, the unique solution
$\mux$ is given as 
the unique positive solution of $\mu \tanh \mu = \gl$.
Cf.\
Theorems \ref{Tmux0}(i) and \ref{Tlex}(ii).
There is no phase transition.
This time,
$$
Q(\mu)= \frac\mu {\tanh\mu} (\mu \tanh\mu -1).
$$
Thus $Q(\mux)>0$ if and only if $\mux\tanh\mux>1$; since
$\mux\tanh\mux=\gl$, it follows that
there is a giant component for $\gl>1$, and not for $\gl\le1$.
(This also follows from \refR{RS-1} and \eqref{evennu}.)
In the critical case $\gl=1$ we have $\mux\tanh\mux=1$ and numerically 
$\mux\approx 1.19968$ %1.199678640
and (asymptotic)
edge density 
$\nu(\mux)=\mux^2/\gl=\mux^2\approx 1.43923$.  % 1.439228839
\end{example}

\begin{example}\label{E123xxx}
  $\cS=\set{1,2,3,\dots}=\bbZi$, graphs without isolated vertices.
We have that $\phis(\mu)=e^\mu-1$, and the characteristic equation is
$$
\frac{\mu e^\mu}{e^\mu -1} = \frac{\mu^2}{\gl}.
$$
Since $0\notin \cS$, we seek strictly positive solutions,
which is to say that $\gl=\glx(\mu)=\mu(1-e^{-\mu})$.
Since $\mu(1-e^{-\mu})$
is increasing on $(0,\infty)$, there is a unique such solution $\mux$
for every $\gl>0$. Cf.\ \refT{Tlex}(ii).

We have that
$$
Q(\mu) = \frac{\mu(\mu-1)e^\mu}{e^\mu-1}.
$$
%so that $\gl Q(\mux)=\mux^2(\mux-1)$. 
Thus $Q(\mux)>0$ if and only
if $\mux>1$, which is to say that $\gl>\glx(1)=1-e^{-1}$. There is a giant
component when $\gl> 1-e^{-1}$, and not when $\gl\le 1-e^{-1}$.
In the critical case $\gl=1-e^{-1}$, $\mux=1$ and the critical 
(asymptotic) 
edge
density is 
$\nu(\mux)=\mux^2/\gl=e/(e-1)\approx1.58198$.  %1.581976707

\end{example}

\begin{example}\label{E01}
  $\cS=\set{0,1}$, matchings.
We have that $\phis(\mu) = 1+\mu$, and the characteristic equation
is
$$
\frac \mu{1+\mu} = \frac {\mu^2}{\gl}.
$$
Either $\mu=0$ or $\gl=\glx(\mu)=\mu(1+\mu)$, so the solutions
for given $\gl$ are $\mu=0$ and $\mu= -\frac12 + \sqrt{\gl+\frac14}$.
Since $\glx$ is increasing, \refT{Tlex}(ii) applies and shows that
$\mux=-\frac12 + \sqrt{\gl+\frac14}$ for all $\gl>0$.
This can also easily be verified directly, using
$$
\psis(\mu) =\log(1+\mu) -\frac{\mu}{2(1+\mu)}
$$
which yields $\psis'(\mu)>0$ for $\mu\ge 0$ (\cf{} \refL{Lpsi}).

By \refT{T1}, as $\ntoo$,
$$
\frac{n_0}n \pto 
\PoS(\mux)\set0 
= 
\frac1{\phis(\mux)}
=
\frac 1{1+\mux} 
= \frac1\gl\left\{\sqrt{\gl+\frac14}-\frac12\right\}.
$$

Obviously there is no giant component. Indeed,
$Q(\mu)=-\PoS(\mu)\set1<0$ for $\mu>0$.
\end{example}

\begin{example}\label{E02}
  $\cS=\set{0,2}$, isolated cycles.
We have that $\phis(\mu)= 1 + \frac12 \mu^2$, and the characteristic
equation is
$$
\frac{\mu^2}{1+\frac12\mu^2} = \frac{\mu^2}{\gl}.
$$
Therefore, either $\mu=0$ or 
$\gl=\glx(\mu)=1+\frac12\mu^2$, so that the solutions for a given
$\gl$ are
$\mu=0$ and, when $\gl>1$, $\mu=\sqrt{2(\gl-1)}$.
Again, $\glx$ is an increasing function, and so is
$$
\psis(\mu) = \log(1+\tfrac12\mu^2) - \frac{\frac12\mu^2}{1+\frac12\mu^2},
$$
by \refL{Lpsi} or direct calculations.
Thus, see \refT{Tlex}(ii),
$$
\mux = \begin{cases} 0 &\text{when } \gl \le 1,\\
\sqrt{2(\gl-1)} &\text{when } \gl >1.
\end{cases}
$$
We thus have a continuous phase transition at $\gl=1$, of the same
type as in \refE{Eeven}, see \refT{Tmux0}(iii) and \refT{Tmux00}(i).
There is no other phase transition.

It is easily seen that $Q(\mu)=0$ for all $\mu$, which may be interpreted
as saying that the random graph is, in a certain sense, critical
whenever $\gl>1$. If we remove the isolated vertices, and condition on
the number of remaining vertices, we obtain a
random regular graph with degree 2.
Hence, 
for $\mux>0$, \ie{}, for $\gl>1$, we see that
the largest component behaves as for $\cS=\set2$, see \refE{Es}, with
convergence of $\ver(\Gnlns)/n$ to a distribution but not to a constant.
\end{example}

\begin{example}\label{E03} 
  $\cS=\set{0,3}$.
This time, $\phis(\mu) = 1+\frac16 \mu^3$, and the characteristic equation is
$$
\frac{\frac12\mu^3}{1+\frac16\mu^3} = \frac{\mu^2}{\gl}.
$$
Either $\mu=0$, or
\begin{equation}\label{final}
\gl =
\glx(\mu)=
\frac{1+\frac16\mu^3}{\frac12\mu}.
\end{equation}
%The left side 
This is a convex function of $\mu$ with a minimum of $3^{2/3}$ at the 
point $\mu=3^{1/3}$. Hence, the
characteristic equation has no positive root when $\gl<3^{2/3}$, one
such root if 
$\gl=3^{2/3}$, and two such roots if $\gl>3^{2/3}$.
We have
$$
\psis(\mu) = \log(1+\tfrac16\mu^3) - \frac{\frac14\mu^3}{1+\frac16\mu^3}.
$$
Unlike the previous examples, $\psis$ is \emph{not} monotone, 
\cf{} \refL{Lpsi}.
In fact, 
$$
\psis(3^{1/3}) = \log(1+\tfrac12) -\tfrac12 <0=\psis(0),
$$
so the correct root is $\mux=0$ (rather than $3^{1/3}$)
when $\gl=3^{2/3}$. The function $\psi$ has a minimum at $\mu=3^{1/3}$;
$\psi$ decreases on $[0,3^{1/3}]$ and increases on $[3^{1/3},\infty)$.
There exists thus a unique $\muo>3^{1/3}$ such that
$\psis(\muo)=0$, and we set $\glo=2(1+\frac16\muo^3)/\muo$.
(Numerically, $\muo\approx2.03134$    %2.031339112  
and $\glo=\glx(\muo)\approx 2.36002$.)  % 2.360018386
We deduce that $\mux=0$ for $\gl<\glo$
while, for $\gl > \glo$, $\mux$ is the largest root of \eqref{final}.
For $\gl=\glo$, there are two roots $\mu$
of \eqref{final} with the same value of $\psis(\mu)$, 
so we have a jump phase transition 
and Theorem \ref{T1} does not apply; see Theorems \ref{Tmux0}(iii) and
\ref{Tmux00}(ii). 
There is no other phase transition, and $\Lex=\set{\glo}$.

Since $1\notin\cS$, by \refT{Tgiant}(ii),
there exists a giant component whenever $\gl>\glo$.
Indeed,
$$
Q(\mu) = \frac{\frac12\mu^2}{1+\frac16\mu^3}>0
$$
for every $\mu>0$.
\end{example}

\begin{example}
  \label{Einfty}
$\cS=\set{1,2,4,8,\dots}=\set{2^j:j\ge0}$.  
We claim that as $j\to\infty$, 
\begin{equation}\label{gw}
  \glx(2^jx)=2^{j}x^2\bigpar{1+\o(1)},
\end{equation}
for every $x\in(2/e,4/e)$. (In fact, this holds uniformly on every
closed subinterval of $(2/e,4/e)$.) It follows that if $a=4/e$ and
$\eps>0$ is small and fixed, then for large $j$, 
$\glx((a-\eps)2^j)\approx(a-\eps)^22^j$ and
$\glx((a+\eps)2^j)=\glx((a/2+\eps/2)2^{j+1})
\approx(a+\eps)^22^{j-1}$, so $\glx$ drops by a factor of about 2 in
the vicinity of $a2^j$. Consequently, for all large $j$,
there is an interval $I_j\subset\bigpar{(a-\eps)2^j,(a+\eps)2^j}$
where $\glx$ is decreasing, and thus by \refT{Tlex} there exists
$\gl\in\Lex$ such that $\muxmax(\gl)\ge\max I_j\ge 2^j$.
Hence the set $\set{\muxmax(\gl):\gl\in\Lex}$ is unbounded and thus
infinite, so $\Lex$ is infinite and there is an infinite number of
phase transitions.

To verify \eqref{gw} we show that if $\mu=2^jx\in(a2^{j-1},a2^j)$
then $\phis(\mu)=\sum_{k\in\cS}\mu^k/k!$  and 
$\phis'(\mu)=\sum_{k\in\cS}k\mu^{k}/(k-1)!$ 
are dominated by the terms with $k=2^j$:
\begin{align}\label{sjw}
  \phis(\mu)=\ioi\frac{\mu^{2^{j}}}{2^j!},
&&&
  \phis'(\mu)=\bigpar{1+\o(1)}\frac{\mu^{2^{j}-1}}{(2^j-1)!}
\end{align}
as $j\to\infty$;
this yields that $\phis'(\mu)\sim (2^j/\mu)\phis(\mu)$ and 
\begin{equation*}
  \glx(\mu)
= \frac{\mu}{\phis'(\mu)/\phis(\mu)}
\sim
\frac{\mu}{2^j/\mu}
=
\frac{\mu^2}{2^j}
=2^j x^2.
\end{equation*}
Finally, to show \eqref{sjw}, we observe by Stirling's
inequality that, as $k\to\infty$,
\begin{equation*}
  \frac{\mu^k}{k!}
=
\ioi(2\pi k)\qqw\parfrac{e\mu}{k}^k
\end{equation*}
and thus, with $k_i=2^i$,
\begin{align*}
\frac{\mu^{k_{i+1}}}{k_{i+1}!} \Bigm/
\frac{\mu^{k_{i}}}{k_{i}!} 
&=
\ioi 2\qqw\parfrac{e\mu}{2k_i}^{2k_i} \parfrac{e\mu}{k_i}^{-k_i}\\
&=
\bigpar{2\qqw+\o(1)} \parfrac{e\mu}{4k_i}^{k_i},
\end{align*}
which for $\mu=2^jx\in(a2^{j-1},a2^j)$
is exponentially small if $i\ge j$ and exponentially large if
$i<j$. The estimate \eqref{sjw} for $\phis(\mu)$ follows, and a
similar calculation with $k_i=2^i-1$ yields the result for $\phis'(\mu)$.
\end{example}

\section{Multigraphs}\label{Smulti}

As explained at the end of \refS{S1.5}, 
we shall count multigraphs with certain properties,
and shall later relate our conclusions to simple graphs.
Let $\gxn$ be the (infinite) set of all multigraphs on the vertex set
\set{1,2,\dots,n}, 
and let $\gxns$ be the subset of \smultigraph{s} on \set{1,2,\dots,n}
(we extend the definitions above to multigraphs in the obvious way,
noting that a loop counts two towards 
the degree of the vertex in question).

Let $\nu\ge 0$.
We define a random multigraph $\gnmux$ by 
taking $\Po(\nu)$ edges between each pair of vertices and $\Po(\frac12\nu)$
loops at each vertex, these random numbers being independent
of one another.
It is easily seen that this is equivalent to
assigning to each multigraph $G\in\gxn$ the probability
\begin{equation}\label{gnmux}
  \P(\gnmux=G)
=w(G)\nu^{e(G)} e^{- n^2\nu /2},
\end{equation}
where 
$$w(G)\=2^{-\ell}\prod_{j\ge2}j!^{-m_j},
$$
with $\ell$ the
number of loops of $G$, and $m_j$ the number of $j$-fold multiple
edges (including multiple loops). That is, $m_j=a_j+b_j$
where $a_j$ is the number of distinct pairs of vertices joined by
exactly $j$ parallel edges, and $b_j$ is the number of vertices having
exactly $j$ loops. See, \eg, \citet{JKLP}. 

Note that the total number of edges $\edg(\gnmux)$ is Poisson-distributed 
with parameter $\binom n2\nu+\frac12 n\nu=\frac12 n^2\nu$. 
We further define the random \smultigraph{} $\gnmuxs$ by conditioning
$\gnmux$ on being an \smultigraph.
Thus, for any multigraph $G\in\gxns$, by \eqref{gnmux},
\begin{equation}\label{b0}
  \P\bigpar{\gnmuxs=G}
= \frac 1 \zxnmus w(G)\nu^{e(G)},
\end{equation}
where
\begin{equation}\label{b1}
\zxnmus
\=
\sum_{G\in\gxns}w(G)\nu^{e(G)}
=e^{n^2\nu/2}\P(\gnmux\text{ is an \smultigraph}).
\end{equation}
We shall assume, of course, that $\gxns\neq\emptyset$. It is easy to
see that this holds for all $n$
if $\cS$ contains some even number,
but if all elements of $\cS$ are odd, then $n$ has to be even.
We tacitly assume this in the sequel.

If the multigraph $G\in\gxn$ is simple, \ie{} has no loops and no 
multiple edges, then $w(G)=1$ and \eqref{gnmux} yields
$\P(\gnmux=G)\asymp\nu^{\edg(G)}\asymp\P(\gnp=G)$
when $\nu=p/(1-p)$, \ie{}, $p=\nu/(1+\nu)$.
Hence, assuming this relation between $\nu$ and $p$, $\gnmux$
conditioned on being simple has the same distribution as $\gnp$.
Conditioning further on being \sgraph{s}, we obtain the following.
\begin{lemma}
  \label{Lsimple}
If\/ $\nu=p/(1-p)$, then 
\begin{equation*}
  \gnps
\eqd
\bigpar{\gnmuxs\mid\gnmuxs \text{\rm\ is simple}}.
\end{equation*}
\end{lemma}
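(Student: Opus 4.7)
The plan is to directly compare probability mass functions. Both $\gnps$ and $(\gnmuxs\mid\gnmuxs\simple)$ are supported on the finite set of simple \sgraph{s} on $\set{1,\dots,n}$, so it suffices to show their mass functions on this set are proportional.

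First I would specialize \eqref{gnmux} to a simple graph $G$: having no loops and no multiple edges forces $w(G)=1$, so $\P(\gnmux=G)=\nu^{\edg(G)}e^{-n^2\nu/2}$. On the other hand, substituting $\nu=p/(1-p)$ into the Erd\H os--R\'enyi mass function yields
\begin{equation*}
\P(\gnp=G)=p^{\edg(G)}(1-p)^{\binom n2-\edg(G)}=(1-p)^{\binom n2}\nu^{\edg(G)}.
\end{equation*}
Thus the two expressions agree up to a multiplicative constant independent of $G$; after renormalization over the simple graphs this gives
\begin{equation*}
\bigpar{\gnmux\mid\gnmux\simple}\eqd\gnp.
\end{equation*}

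Finally I would perform a second conditioning step. Conditioning the right-hand side on the resulting graph being an \sgraph{} yields $\gnps$ by \eqref{gnps}. On the left-hand side, combining the definition of $\gnmuxs$ with the trivial identity $(X\mid A\cap B)=((X\mid B)\mid A)$ gives
\begin{equation*}
\bigpar{\gnmuxs\mid\gnmuxs\simple}
=\bigpar{\gnmux\mid \gnmux\in\gxns \text{ and } \gnmux\simple}
=\bigpar{(\gnmux\mid\gnmux\simple)\mid \gnmux\in\gxns},
\end{equation*}
and since a simple multigraph lies in $\gxns$ iff it is a simple \sgraph, the last expression equals $(\gnp\mid \gnp \text{ is an \sgraph})=\gnps$. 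There is no substantive obstacle: one only needs to observe that the $G$-independent factors $e^{-n^2\nu/2}$ and $(1-p)^{\binom n2}$ cancel under the conditioning, and that the two conditioning events (simplicity, and having all degrees in $\cS$) depend only on the realized multigraph, so they may be imposed in either order.
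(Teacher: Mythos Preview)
Your proof is correct and follows essentially the same approach as the paper: the paper's argument is the short paragraph immediately preceding the lemma, which observes that for simple $G$ one has $w(G)=1$ and hence $\P(\gnmux=G)\asymp\nu^{\edg(G)}\asymp\P(\gnp=G)$ when $\nu=p/(1-p)$, so $(\gnmux\mid\text{simple})\eqd\gnp$, and then conditions further on being an \sgraph. You have simply written this out in more detail, with the explicit constants $(1-p)^{\binom n2}$ and $e^{-n^2\nu/2}$ and a careful justification of the iterated conditioning.
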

We are interested in the case $np\to\gl<\infty$, and note that
$np\to\gl$ and $n\nu\to\gl$ are equivalent.

We shall also use 
the configuration model for random multigraphs with given vertex
degrees introduced
by \citet{BB80},
see  \citet[Section II.4]{bollobas}.
(See \citet{BenderC} and \citet{WormaldPhD,Wormald81} for related
arguments.)
To be precise, let us fix the vertex degrees to be some non-negative
integers $d_1,d_2,\dots,d_n$ (assuming tacitly that $\sum_id_i$ is
even); equivalently, we fix a degree sequence $\dd=\dn$.
We attach $d_i$ \emph{half-edges} (or \emph{stubs}) to vertex $i$.
The total number of half-edges is thus $2N\=\sumin
d_i$, and a \emph{configuration} is one of the
$(2N-1)!!=(2N)!/(2^NN!)$ partitions of the set of half-edges into $N$
pairs.
Each configuration defines a multigraph in $\gxn$ by combining each
pair of half-edges to an edge; this multigraph has vertex
degrees $d_1,\dots,d_n$ and $N=\frac12\sum_id_i$ edges.
By taking a uniformly random configuration we thus obtain a random
multigraph $\gnddx$ with the given degree sequence \dd.

It is easily seen that every multigraph $G\in\gxn$ with
the given vertex degrees $d_1,\dots,d_n$ arises from exactly
$w(G)\prodin d_i!$ configurations.
We obtain therefore 
that the contribution to $\zxnmus$ in \eqref{b1} from a set of
multigraphs with given vertex degrees $d_1,\dots,d_n\in\cS$ is given by 
summing $\nu^N/\prodin d_i!$ over all corresponding configurations.
In particular, since the number of configurations is $(2N-1)!!$, the
contribution to $\zxnmus$ from all multigraphs with vertex 
degrees $d_1,\dots,d_n\in\cS$ equals
\begin{equation}
  \label{a5}
\frac{(2N-1)!!\,\nu^{N}}{\prodin d_i!}.
\end{equation}
Moreover, for given $\dd=\dn$, the factor $\nu^N/\prodin d_i!$
is a constant, so by \eqref{gnmux}, the probability 
that $\gnmux$ belongs to any given set of
multigraphs with this degree sequence $\dd$ is proportional to the number of
corresponding configurations.
Consequently, if $d_i(G)$ denotes the degree of vertex $i$ in a
(multi)graph $G$, and $\dd(G)\=(d_i(G))_{i=1}^n$ for $G\in\gxn$, we
obtain the following well-known fact.
(This is another reason for the weights $w(G)$ in \eqref{gnmux}.)

\begin{lemma}
  \label{Lconf}
For any given degree sequence $\dd=\dn$ and any $\nu>0$, 
the random multigraph $\gnmux$ conditioned on having degree sequence
$\dd$ has the distribution given by the configuration model; in other
words,
\begin{equation*}
 \bigpar{\gnmux\mid\dd(\gnmux)=\dd}\eqd\gnddx. 
\end{equation*}
As a consequence, if every $d_i\in\cS$, the same holds for $\gnmuxs$.
\end{lemma}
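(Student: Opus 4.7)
The plan is to verify the identity by direct computation, using the probability formula \eqref{gnmux} together with the combinatorial fact, stated in the paragraph preceding the lemma, that each multigraph $G\in\gxn$ with prescribed degrees $d_1,\dots,d_n$ arises from exactly $w(G)\prodin d_i!$ distinct configurations. These are really the only two ingredients needed.

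First I would fix $\dd=\dn$ with $\sum_i d_i=2N$ even and consider any multigraph $G\in\gxn$ with $\dd(G)=\dd$. By \eqref{gnmux},
\begin{equation*}
\P(\gnmux=G)=w(G)\nu^{N}e^{-n^2\nu/2},
\end{equation*}
and since the factor $\nu^N e^{-n^2\nu/2}$ depends only on $\dd$, summing over all such $G$ and then applying \eqref{a5} gives
\begin{equation*}
\P\bigpar{\dd(\gnmux)=\dd}
=\nu^{N}e^{-n^2\nu/2}\sum_{G':\dd(G')=\dd}w(G')
=\nu^{N}e^{-n^2\nu/2}\,\frac{(2N-1)!!}{\prodin d_i!}.
\end{equation*}
Dividing, the conditional probability becomes
\begin{equation*}
\P\bigpar{\gnmux=G\mid\dd(\gnmux)=\dd}
=\frac{w(G)\prodin d_i!}{(2N-1)!!}.
\end{equation*}

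Next I would compute $\P(\gnddx=G)$ directly from the definition of the configuration model: a configuration is chosen uniformly from the $(2N-1)!!$ pairings of half-edges, and $w(G)\prodin d_i!$ of these project onto $G$, so
\begin{equation*}
\P(\gnddx=G)=\frac{w(G)\prodin d_i!}{(2N-1)!!},
\end{equation*}
which matches the conditional probability above. This proves the first assertion.

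For the second assertion, note that if every $d_i\in\cS$ then the event $\set{\dd(\gnmux)=\dd}$ is a subset of the event $\set{\gnmux\text{ is an \smultigraph}}$, so conditioning first on being an \smultigraph{} (which defines $\gnmuxs$ by \eqref{b0}) and then on $\dd$ yields the same distribution as conditioning $\gnmux$ directly on $\dd$. Hence $(\gnmuxs\mid\dd(\gnmuxs)=\dd)\eqd(\gnmux\mid\dd(\gnmux)=\dd)\eqd\gnddx$. There is really no obstacle here; the entire content lies in the counting identity connecting configurations to weighted multigraphs, and that identity was established just before the lemma.
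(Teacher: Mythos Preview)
Your proof is correct and follows exactly the approach the paper takes: the paper notes just before the lemma that each multigraph with degree sequence $\dd$ arises from $w(G)\prodin d_i!$ configurations, so by \eqref{gnmux} the conditional probability given $\dd$ is proportional to the number of configurations, which is precisely the configuration-model distribution. You have simply written out these computations explicitly, including the normalisation via \eqref{a5}; the argument and ingredients are the same.
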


We are interested in the case $\nu=\gl_n/n$, with $\gl_n\to\gl>0$.

\begin{theorem}
  \label{T1x}
The results of \refT{T1}(i)--(iii) 
hold with $\gnlns$ replaced by
$\gxnlns$.
Furthermore, 
\begin{equation}
  \label{zx}
\frac1n \log \P(\gxnln \text{\rm\ is an \sgraph})
\to \psi_\cS(\mux)-\tfrac12\gl
\end{equation}
and
$n^{-1} \log\zxnlns\to\psis(\mux)$. 
\end{theorem}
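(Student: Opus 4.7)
The plan is to make rigorous the heuristic derivation preceding \refT{T1}. Using the configuration-model identity \eqref{a5} together with the $n!/\prod_j n_j!$ assignments of degrees to labelled vertices, I would decompose
\begin{equation*}
\zxnlns = \sum_\nn Z(\nn), \qquad Z(\nn) := \frac{n!}{\prod_j n_j!}\cdot \frac{(2N-1)!!\,(\gl_n/n)^N}{\prod_{j\in\cS}(j!)^{n_j}},
\end{equation*}
where the sum runs over profiles $\nn=(n_j)_{j\in\cS}$ with $\sum_j n_j = n$ and $N := \tfrac12\sum_j j n_j \in \bbZo$. Writing $\pi_j := n_j/n$ and $\nu_\pi := \sum_j j\pi_j$, Stirling's formula then yields, uniformly over profiles with $\nu_\pi$ and the number of nonzero $n_j$'s suitably bounded,
\begin{equation*}
\tfrac1n \log Z(\nn) = F(\pi(\nn);\gl_n) + o(1),\qquad F(\pi;\gl) := -\sum_j \pi_j \log \pi_j - \sum_j \pi_j \log(j!) + \tfrac{\nu_\pi}{2}\bigpar{\log(\gl\nu_\pi)-1}.
\end{equation*}

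Next I would analyse the maximum of $F$ over probability sequences supported in $\cS$. Lagrange multipliers on $\sum_j \pi_j = 1$ show that every interior critical point is $\pi = \PoS(\mu)$ with $\mu^2 = \gl\nu_\pi$, i.e.\ $\mu\in\egl$, and the critical value is $F(\PoS(\mu);\gl) = \psisxx(\mu;\gl) = \psis(\mu)$. A compactification using the fact that $F \to -\infty$ once the $\log(j!)$ penalty dominates (ruling out escape to infinity in the degree support) ensures the global supremum is attained at one of these critical points, hence equals $\psis(\mux)$ by \refC{Cmax}. The lower bound $\tfrac1n\log\zxnlns \ge \psis(\mux) + o(1)$ follows by evaluating $Z(\nn^\star)$ for $\nn^\star$ obtained by rounding $n\cdot\PoS(\mux)$ (with a parity adjustment). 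The upper bound combines the Stirling estimate with two tail arguments: (a) profiles with $\nu_\pi \ge M$ contribute negligibly, via the $\log(j!)$ penalty and a \refL{Lmu}-type estimate, and (b) the surviving profiles number at most $e^{o(n)}$, by the Hardy--Ramanujan bound $p(m)\le e^{C\sqrt{m}}$ after truncating the degree support to $O(\log n)$ values. This gives $\tfrac1n\log\zxnlns \to \psis(\mux)$, which is equivalent to \eqref{zx} via \eqref{b1}.

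For the multigraph analogue of \refT{T1}\ref{T1a}--\ref{T1c}, I would obtain from a second-order expansion of $F$ at its maximizer that $F(\pi;\gl) \le \psis(\mux) - c(\eps)$ whenever $\dtv(\pi,\PoS(\mux)) \ge \eps$ and $\nu_\pi$ stays bounded. The partition-function asymptotics then force the $\zxnlns$-weighted mass of such profiles to be $\le e^{-c(\eps)n}$, yielding the exponential concentration needed for the multigraph version of \ref{T1c}; \ref{T1a} is immediate, and \ref{T1b} follows from it together with uniform integrability provided by the same tail bounds that powered the upper bound. The principal obstacle is ensuring that the Stirling asymptotics are uniform in $\nn$: the number of nonzero $n_j$'s can grow with $n$ while each contributes an $O(\log n)$ Stirling remainder, so the a priori truncation in step (b) is essential, and showing it is harmless is where the $\log(j!)$ penalty in $F$ does the real work.
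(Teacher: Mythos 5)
Your plan follows the heuristic of Section~\ref{S1.5} and makes it rigorous by the Laplace method over the continuous simplex: decompose $\zxnlns=\sum_\nn Z(\nn)$, estimate $\frac1n\log Z(\nn)$ by Stirling, locate critical points of $F$ by Lagrange multipliers, and combine with an entropy count of the number of profiles. This is a genuinely different route from the paper's. The paper never optimizes the analytic functional $F$ over the infinite-dimensional simplex; instead it defines the \emph{discrete mode} $\nnz\in\cNSn$ maximizing $\ann$, derives its structure purely combinatorially from the perturbation inequalities obtained by moving two (or, within a parity class, one) vertices from degree $k$ to degree $j$ (equations~\eqref{a9}, \eqref{a9b}, \eqref{a12}), and then passes to subsequential limits of $\nnz/n$, showing any such limit equals $\PoS(\mu)$ for some $\mu\in\egl$. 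The concentration statement~\eqref{t1c} is then proved by the chaining argument of Lemmas~\ref{L3b}--\ref{L3}, which iterates the discrete perturbation step, and the count of profiles is Lemma~\ref{LA}. The paper's route has the advantage of never needing to address whether a supremum is attained, whether $F$ is semicontinuous, or whether the relevant set of probability measures is compact in $\dtv$.

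There are two genuine gaps in your outline as written. First, the ``second-order expansion of $F$ at its maximizer'' can only give the bound $F(\pi;\gl)\le\psis(\mux)-c\dtv(\pi,\PoS(\mux))^2$ in a neighbourhood of $\PoS(\mux)$; what is needed is the \emph{global} statement that $F(\pi;\gl)\le\psis(\mux)-c(\eps)$ whenever $\dtv(\pi,\PoS(\mux))\ge\eps$ and $\nu_\pi$ is bounded. That requires using the hypothesis that $\eglo=\{\mux\}$ is a singleton to exclude other near-maximizers far from $\PoS(\mux)$, and that is not a local fact. Second, the ``compactification'' is not as innocent as stated: the simplex of probability distributions supported on an infinite $\cS$ is not $\dtv$-compact even with $\nu_\pi\le M$, and $F$ is not obviously upper semicontinuous (the entropy piece is lower semicontinuous, $\nu_\pi$ is only lower semicontinuous under pointwise convergence, and the $\log(j!)$ piece is upper semicontinuous). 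Both gaps can be closed cleanly and simultaneously by the Gibbs/Pinsker device: for any $\pi$ with $\nu_\pi=\nu$, setting $\mu:=\sqrt{\gl\nu}$ one has
\begin{equation*}
F(\pi;\gl)=\psisx(\mu;\gl)-D\bigpar{\pi\,\|\,\PoS(\mu)}\le\psisx(\mu;\gl)\le\psis(\mux),
\end{equation*}
with the first identity by direct computation and the last by Corollary~\ref{Cmax}. If $F(\pi^{(k)};\gl)\to\psis(\mux)$ with $\nu_{\pi^{(k)}}$ bounded, then along a subsequence $\nu_{\pi^{(k)}}\to\nu'$ and continuity of $\psisx$ forces $\sqrt{\gl\nu'}=\mux$ by uniqueness of the maximizer; the displayed identity then forces $D(\pi^{(k)}\|\PoS(\sqrt{\gl\nu_{\pi^{(k)}}}))\to0$, and Pinsker plus continuity of $\mu\mapsto\PoS(\mu)$ yield $\dtv(\pi^{(k)},\PoS(\mux))\to0$. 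This replaces both the compactification and the second-order expansion, and also gives the sharp upper bound $\sup_\pi F=\psis(\mux)$ you need. With that replacement, the rest of your outline (Stirling with errors $\O(\sqrt n\log n)=\o(n)$ since the number of distinct degrees present is $\O(\sqrt n)$; Hardy--Ramanujan or the crude count of Lemma~\ref{LA}; rounding and a cutoff for the lower bound as in~\eqref{a17}; moment bounds as in Lemma~\ref{LN} for part~\ref{T1b}) goes through.
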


We will prove \refT{T1x} in the following section, and then obtain 
\refT{T1} as a consequence using \refL{Lsimple} and the following
technical result.  
\begin{lemma}
  \label{L1xy}
If\/ $\gl_n\to\gl>0$, then\/
$
\liminf_\ntoo
\P\bigpar{\gxnlns\text{\rm\ is simple}}>0.
$
\end{lemma}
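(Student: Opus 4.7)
The plan is to condition on the degree sequence and invoke a classical limit theorem for the configuration model. By \refL{Lconf}, the conditional distribution of $\gxnlns$ given its degree sequence $\dd=\dd(\gxnlns)$ coincides with that of the configuration-model multigraph $\gnddx$, so
\begin{equation*}
\P\bigpar{\gxnlns\simple} = \E\bigsqpar{\P\bigpar{\gnddx\simple\bigm|\dd}}.
\end{equation*}
It therefore suffices to show that, with probability tending to $1$, the degree sequence of $\gxnlns$ is such that the inner probability is bounded below by some positive constant.

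By \refT{T1x}, which extends parts (i)--(iii) of \refT{T1} to $\gxnlns$, $\pix(\gxnlns)\pto\PoS(\mux)$ and all moments $n\qw\sum_i d_i^r$ converge in probability to the corresponding moments of $\PoS(\mux)$. In particular,
\begin{equation*}
\tfrac1n\sum_i d_i \pto \beta\=\nu(\mux),\qquad
\tfrac1n\sum_i d_i^2 \pto \gamma\=\E(\xmux^2\mid \xmux\in\cS),
\end{equation*}
and both limits are finite. It is a standard fact about the configuration model (see \citet{BB80}, with a sharper statement due to Janson on the probability that a random multigraph is simple) that for any deterministic degree sequences $(d_i^{(n)})_1^n$ satisfying $n\qw\sum_i d_i^{(n)}\to\beta\in(0,\infty)$, $n\qw\sum_i (d_i^{(n)})^2\to\gamma<\infty$, and $n_k^{(n)}/n\to\PoS(\mux)\set k$ for each $k$, one has
\begin{equation*}
\P\bigpar{\gnddx\simple}\to \exp\Bigpar{-\tfrac12\rho-\tfrac14\rho^2}>0,\qquad \rho\=(\gamma-\beta)/\beta.
\end{equation*}

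To combine, fix a small $\eps>0$ and let $E_n$ be the event that $\dtv\bigpar{\pix(\gxnlns),\PoS(\mux)}<\eps$ and $\bigabs{n\qw\sum_i d_i^2-\gamma}<\eps$; by the convergences above, $\P(E_n)\to1$. On $E_n$, applying the cited configuration-model limit along an arbitrary subsequence of degree sequences in $E_n$, together with the continuous dependence of the limit $\exp(-\tfrac12\rho-\tfrac14\rho^2)$ on $(\beta,\gamma)$, yields a uniform lower bound $\P(\gnddx\simple\mid\dd)\ge c_\eps>0$ for all sufficiently large $n$ on $E_n$. Taking expectations gives $\liminf_n \P(\gxnlns\simple)\ge c_\eps>0$, as required. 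The only real subtlety is this transfer of a deterministic-degree-sequence limit theorem to the random $\dd(\gxnlns)$; the combination of moment convergence and the exponential concentration provided by \refT{T1x} makes this step routine.
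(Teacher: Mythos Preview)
Your argument is essentially the paper's proof for the case $\mux>0$, and it is fine there. The genuine gap is the case $\mux=0$, which can occur (by \refT{Tmux0}(iii), whenever $0\in\cS$ and $1\notin\cS$ and $\gl$ is small). In that case \refT{T1x} gives
\[
\tfrac1n\sum_i d_i\pto \nu(\mux)=0,\qquad \tfrac1n\sum_i d_i^2\pto 0,
\]
so your $\beta=0$, the ratio $\rho=(\gamma-\beta)/\beta$ is undefined, and the hypothesis ``$n\qw\sum_i d_i\to\beta\in(0,\infty)$'' of the configuration-model limit you invoke is violated. Nothing in your argument survives: both moment limits are zero, so the event $E_n$ carries no information about $\sum_i d_i^2/\sum_i d_i$, and in particular you have no control ruling out degree sequences where a handful of vertices carry almost all the edges.

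The paper explicitly flags this as the hard case and handles it by a separate argument: remove the isolated vertices, condition on the number $M$ of non-isolated vertices, and apply \refL{LN} \emph{uniformly} over the resulting parameter $m\gl_n/n\le\gl_n$ (this is why the uniformity clause in \refL{LN} matters) to bound $\E\sum_i d_i^2$ by a constant times $M$. Since each remaining vertex has degree at least $1$, this controls $\sum d_i^2/\sum d_i$. A further case split deals with the residual event that $\sum_i d_i$ stays bounded, using \refT{Tmux0} to get $1\notin\cS$ and a direct estimate showing that the empty graph then has positive probability. You would need to supply all of this to complete the proof.
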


\section{Proof of \refT{T1x}}\label{SpfT1x}

For notational convenience, 
we shall consider only the case $\gl_n=\gl$
for all $n$, while noting that our estimates may be extended to the
general case 
$\gl_n\to\gl$.
(The ``constants'' below generally depend on $\gl$, but they may be chosen
uniformly for $\gl$ lying in any compact subset of $(0,\infty)$.
Uniformity as $\gl\to0$ is less obvious, and perhaps not always true, 
but it is remarked a few times when it is important for later proofs.)

Let $\cNSn$ denote the set of all $\nn=(n_0,n_1,\dots)\in\cNS$ such that: 
$\sum_j n_j=n$ and $\sum_j jn_j$ is even. 
We write $\ann$ for the contribution to $\zxnls$ from all multigraphs
with $n_j$ 
vertices of degree $j$, which is to
say that
\begin{equation}\label{a8b}
  \zxnls=\sum_{\nn\in\cNSn}\ann
\end{equation}
and
\begin{equation}\label{a8c}
\P\bigpar{\nn(\gxnls)=\nn}=\frac{\ann}{\zxnls},
\qquad \nn\in\cNSn.
\end{equation}
By \eqref{a5} 
with $N=\frac12\sum_j jn_j$ and $\nu=\gl/n$,
\begin{equation}\label{a8}
  \ann
=
\frac{n!\,\bigpar{\sum_j jn_j-1}!!\,}{\prod_j n_j!\,\prod_j j!^{n_j}}
%\Bigpar{\sum_j jn_j-1}!!\,
\parfrac{\gl}{n}^{N}.
\end{equation}
We note by Stirling's formula that
\begin{equation}\label{a5s}
(2N-1)!!
=\frac{(2N)!}{2^N N!} 
= \Bigpar{\frac{2N}{e}}^N\bigpar{\sqrt2+\O(N^{-1})},
\end{equation}
and it is easily verified that
\begin{equation}\label{a5b}
(2N-1)!!
\ge
 \Bigpar{\frac{2N}{e}}^N, \qquad N \ge 0.
\end{equation}

Let $\nnz=\nnz(n)$ be a mode of the random sequence $\nn(\gxnls)$, \ie{}, by
\eqref{a8c}, a sequence in $\cNSn$ that maximizes $\ann$. (In the case of
a tied maximum we make an arbitrary choice.) We write
$\Nz=\Nz(n)\=\frac12\sum_j j\nz_j$. 

We begin with a coarse but useful quantitative estimate,
obtained by considering only regular multigraphs.
Let $\ee_s\=(\gd_{is})_{i=0}^\infty$, where $\gd_{is}$ is the Kronecker delta.
In the following lemma we take an even number $s \in \cS$, if 
$\cS$ contains such a number. If not,
we pick an odd 
$s\in\cS$ and must then, as noted in the introduction,
restrict ourselves to even values of $n$.

\begin{lemma}\label{L1}
Let $s\in\cS$, and assume that $s$ is even if possible. 
Then 
%there exist constants $\cc(s)\ccdef\ccli$ and $\CC\CCdef\CCli$
%such that
\begin{equation}\label{a7-}
  \cc(s)^n\le z(n\ee_s)\le\annz\le\zxnls\le\CC^n.
\ccdef\ccli \CCdef\CCli
\end{equation}
As a consequence,
$$
\P(\gxnl\text{\rm\ is an \smultigraph})\ge \cc^n, \ccdef\ccavii
$$
and,
%As a consequence, for some $A<\infty$ (depending on $\gl$ and $\cS$)
for any set $\cH$ of multigraphs,
\begin{equation}
  \label{a7}
 \P(\gxnls\in\cH)
\le \CC^n
 \P(\gxnl\in\cH).
\CCdef\CCavii
\end{equation}
\end{lemma}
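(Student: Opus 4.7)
The plan is to prove the four inequalities in \eqref{a7-} by a chain of easy steps, then derive the two consequences by algebraic manipulation using \eqref{b1}.

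First, the two middle inequalities $z(n\ee_s)\le\annz\le\zxnls$ are essentially tautological. By the choice of $s$ (even if possible, otherwise with $n$ restricted to even values), the sequence $n\ee_s$ lies in $\cNSn$, so $z(n\ee_s)$ is one of the terms maximized over in the definition of $\annz$, and in turn $\annz$ is one term in the sum \eqref{a8b} defining $\zxnls$ (all of whose summands are positive).

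For the lower bound $z(n\ee_s)\ge \ccli^n$, I would plug $\nn=n\ee_s$ into \eqref{a8}, so that $N=sn/2$ and
\begin{equation*}
z(n\ee_s)=\frac{(sn-1)!!}{(s!)^n}\parfrac{\gl}{n}^{sn/2},
\end{equation*}
and then apply the elementary bound \eqref{a5b} to get $(sn-1)!!\ge (sn/e)^{sn/2}$. After cancelling the factor $n^{sn/2}$, the $n$-dependent terms disappear and we are left with $\bigpar{(s\gl/e)^{s/2}/s!}^n$, which is of the required form $\ccli^n$. For the upper bound $\zxnls\le \CCli^n$, I would bypass the sum entirely and use \eqref{b1}, which gives $\zxnls=e^{n\gl/2}\P(\gxnl\text{ is an }\cS\text{-multigraph})\le e^{n\gl/2}$, so one may take $\CCli=e^{\gl/2}$.

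The two consequences follow quickly. For the first, rearranging \eqref{b1} with $\nu=\gl/n$ and applying the just-proved lower bound gives
\begin{equation*}
\P(\gxnl\text{ is an \smultigraph})=e^{-n\gl/2}\zxnls\ge (\ccli e^{-\gl/2})^n=\ccavii^n.
\end{equation*}
For the second, conditioning yields, for any event $\cH$,
\begin{equation*}
\P(\gxnls\in\cH)=\frac{\P(\gxnl\in\cH,\,\gxnl\text{ is an \smultigraph})}{\P(\gxnl\text{ is an \smultigraph})}\le\ccavii^{-n}\P(\gxnl\in\cH),
\end{equation*}
so we may take $\CCavii=\ccavii^{-1}$.

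There is no real obstacle beyond bookkeeping; the only point requiring care is the parity condition on $\sum_j jn_j$ in the definition of $\cNSn$, which forces the restriction to even $n$ when every element of $\cS$ is odd. Once one chooses $s$ to be even whenever $\cS$ allows it, and restricts to even $n$ otherwise, the sequence $n\ee_s$ is admissible and the argument goes through uniformly.
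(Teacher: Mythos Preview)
Your proof is correct and essentially identical to the paper's own argument: the same use of \eqref{a8} and \eqref{a5b} for the lower bound, the same appeal to \eqref{b1} for the upper bound, and the same derivation of the two consequences with $\ccavii=\ccli e^{-\gl/2}$ and $\CCavii=\ccavii^{-1}$. Your explicit remark on the parity condition for $n\ee_s\in\cNSn$ is a helpful clarification of what the paper leaves implicit.
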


\begin{proof}
By \eqref{a8} and
  \eqref{a5b}, 
  \begin{equation*}
z(n\ee_s)=
(s!)^{-n}(ns-1)!!\,\parfrac{\gl}n^{ns/2}
\ge	
(s!)^{-n}\Bigpar{\frac{ns}{e}\cdot\frac\gl{n}}^{ns/2},
  \end{equation*}
which yields the first inequality in \eqref{a7-} with 
$\ccli=(s!)^{-1}\xpar{\xfrac{s\gl}{e}}^{s/2}$.
The second and third inequalities are trivial, and the fourth follows
from \eqref{b1} (with $\nu=\gl/n$), 
which yields 
$\zxnls\le e^{n^2\nu/2}=e^{n\gl/2}$.

By %\eqref{a8} and \eqref{a5b},
\eqref{b1} and \eqref{a7-}, 
\begin{equation*}
 \P(\gxnl\text{ is an \smultigraph})
=e^{-\gl n/2}\zxnls
\ge \ccavii^n,
\end{equation*}
with $\ccavii=\ccli e^{-\gl/2}$. Consequently, \eqref{a7} follows with
$\CCavii=\ccavii\qw$ by the definition of conditional probabilities.
\end{proof}

%=e^{n^2\mu/2}\P(\gnmux\text{ is an \smultigraph}).

\begin{lemma}\label{L2}
  There exists a constant $B=B(\cS,\gl)$ such that
  \begin{equation*}
\sum_{\nn\in\cNSn:\,N>Bn}	\ann < e^{-n}\annz \le e^{-n}\zxnls,
  \end{equation*}
where $N=\half\sum_i i n_i$.
Hence,
$\P\bigpar{\edg(\gxnls)>Bn}<e^{-n}$. 

More generally, 
for any $x\ge Bn$, 
$$\P\bigpar{\edg(\gxnls)>x}<e^{-x/B}.$$
Moreover, for any $\gl_0>0$, the constant $B$ can be chosen uniformly
for all $\gl\le\gl_0$. 
\end{lemma}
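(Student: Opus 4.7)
The approach is to pass through the unconditioned multigraph $\gxnl$, in which $\edg(\gxnl)\sim\Po(\tfrac12 n\gl)$ by \eqref{gnmux}, and then transfer a Poisson tail bound back to $\gxnls$ and to the sum $\sum\ann$ using \refL{L1}.

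Observe first that, by \eqref{b1} and positivity of all summands,
\[
\sum_{\nn\in\cNSn:\,N(\nn)>Bn}\ann \;\le\; e^{n\gl/2}\,\P\bigpar{\edg(\gxnl)>Bn},
\]
since $\sum_{\nn\in\cNSn}\ann=\zxnls=e^{n\gl/2}\P(\gxnl\text{\rm\ is an \smultigraph})$. The standard Chernoff bound $\P(\Po(m)\ge x)\le\exp\bigpar{-x\log(x/m)+x-m}$, applied with $m=n\gl/2$ and $x=Bn$, gives $\P\bigpar{\edg(\gxnl)>Bn}\le\exp\bigpar{nB(1-\log(2B/\gl))-n\gl/2}$, so the right side above is at most $\exp\bigpar{nB(1-\log(2B/\gl))}$, which becomes an arbitrarily negative exponential in $n$ as $B\to\infty$.

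Combining with the lower bound $\annz\ge\ccli^n$ from \refL{L1} gives the first inequality: one picks $B=B(\cS,\gl)$ large enough that $B\bigpar{\log(2B/\gl)-1}\ge 1-\log\ccli$, which is possible because the left side tends to $\infty$. The inequality $\le e^{-n}\zxnls$ is immediate from $\annz\le\zxnls$, and the probability bound $\P(\edg(\gxnls)>Bn)<e^{-n}$ then follows on dividing the sum bound by $\zxnls\ge\annz$. For the refined tail $\P(\edg(\gxnls)>x)<e^{-x/B}$ with $x\ge Bn$, one repeats the argument with general $x$: since $\log(2x/(n\gl))\ge\log(2B/\gl)$ and $n\le x/B$, the Chernoff bound on the sum becomes $\exp\bigpar{x(1-\log(2B/\gl))}$ and the denominator factor $\ccli^{-n}$ is absorbed as $\ccli^{-x/B}$; thus choosing $B$ large enough that $\log(2B/\gl)-1-(\log\ccli)/B\ge 1/B$ suffices.

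The main point to watch is uniformity as $\gl\downto 0$, since $\ccli=(s!)^{-1}(s\gl/e)^{s/2}$ (with $s$ as in the proof of \refL{L1}) vanishes like $\gl^{s/2}$ and hence $-\log\ccli\sim(s/2)\log(1/\gl)$. However $\log(2B/\gl)\sim\log(1/\gl)$ also grows, with leading coefficient $1$, so choosing any fixed $B>s/2$ keeps the net coefficient of $\log(1/\gl)$ in the exponent bounded below by a positive constant. A single $B=B(\cS,\gl_0)$ therefore works uniformly on $(0,\gl_0]$, which is the only place in the argument where some care is required; the rest of the estimate is otherwise insensitive to the precise value of $\gl$.
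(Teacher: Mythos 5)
Your proof follows essentially the same route as the paper: drop the $\cS$-constraint to bound the tail sum by $e^{n\gl/2}\P\bigpar{\Po(\tfrac12\gl n)>x}$, apply a Poisson Chernoff bound, and divide by the lower bound $\annz\ge\ccli^n$ from Lemma~\ref{L1}. One sign slip to fix: since $\ccli^{-x/B}=\exp\bigpar{-(x/B)\log\ccli}$, the sufficient condition you display for the general-$x$ bound should read $\log(2B/\gl)-1+(\log\ccli)/B\ge 1/B$ (with a plus sign); as $\log\ccli<0$ for small $\gl$ this is \emph{harder} to satisfy than what you wrote, but it still holds for any fixed $B>s/2$ once $B$ is taken large enough on the compact part of the range, exactly as your uniformity discussion (a useful addition, since the paper asserts uniformity without spelling out the estimate) correctly concludes.
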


\begin{proof}
By \eqref{gnmux},
  \begin{equation}\label{l2a}
\sum_{\nn\in\cNSn:\,N>x} \ann 
\le
\sum_{\nn\in\cN^n:\,N>x} \ann 
= e^{n\gl/2}\P\bigpar{\edg(\gxnl)>x}.	
\end{equation}
Since the number of edges $\edg(\gxnl)\sim\Po(\frac12 \gl n)$, it follows by
standard Chernoff estimates for the Poisson distribution, see \eg{}
\cite[Corollary 2.4 and Remark 2.6]{JLR}, that if $B\ge4\gl$
and $x\ge Bn\ge4\gl n$, then 
\begin{equation}\label{l2b}
\P\bigpar{\edg(\gxnl)>x}	
=
\P\bigpar{\Po(\tfrac12\gl n)>x}	
< e^{-x}.
  \end{equation}
We choose
$$
B\ge\max \set{4\gl,\, \tfrac12\gl+1-\log\ccli(s)},
$$
and find by \eqref{l2a}, \eqref{l2b} and \eqref{a7-},
since $(B-1)n\le (B-1)x/B=x-x/B$,
  \begin{equation*}
\sum_{\nn\in\cNSn:\,N>x} \ann 
<
e^{n\gl/2-x-n\log \ccli(s)}\annz
\le
e^{(B-1)n-x}\annz
\le
e^{-x/B}\annz.
\end{equation*}
The results follow by this and \eqref{a8c}.
\end{proof}

Let $\nn\in\cNS$, and let $j$ and $k$ be two different indices in
$\cS$ such that $n_k\ge2$, and define $\nny\in\cNS$ by 
$\ny_j=n_j+2$,  $\ny_k=n_k-2$, and $\ny_i=n_i$ for $i\neq j,k$; 
%$\ny_i=n_i+2\gd_{ij}-2\gd_{ik}$; 
in other words, we replace two
vertices of degree $k$ by vertices of degree $j$.
By \eqref{a8}, with $N=\frac12 \sum_i i n_i$ and $N'=\frac12\sum_i i\ny_i
=N+j-k$,
\begin{equation*}
  \begin{split}
\frac{\anny}{\ann}
&=
\frac{(2N'-1)!!}{(2N-1)!!}\cdot
\frac{n_j!\,n_k!}{n_j'!\,n_k'!}\cdot
\frac{k!^2}{j!^2}\cdot
\parfrac{\gl}{n}^{N'-N}
\\&
=
\frac{n_k(n_k-1)k!^2}{(n_j+1)(n_j+2)j!^2} (2N)^{j-k}
\parfrac{\gl}{n}^{j-k}\lrpar{1+\O\xpar{|j-k|^2/N}}.
  \end{split}
\end{equation*}
For $\nn=\nnz$ and any $j,k\in\cS$ with $\nz_k\ge2$, this quotient is
$\le 1$. Hence, for all $j,k\in\cS$ (also, trivially, if $\nz_k<2$ or $j=k$),
\begin{equation}
  \label{a9}
\nz_k(\nz_k-1)k!^2 \le (\nz_j+1)(\nz_j+2)j!^2\,
\parfrac{2\gl \Nz}{n}^{k-j}\lrpar{1+\O\xpar{|j-k|^2/\Nz}}.
\end{equation}
Furthermore, in the case $k>j$, we have $N'<N$ and
$$(2N-1)!!<(2N'-1)!!\,(2N)^{k-j},
$$
and we obtain in the same way
the sharper inequality
\begin{equation}
  \label{a9b}
\nz_k(\nz_k-1)k!^2 \le (\nz_j+1)(\nz_j+2)j!^2\,
\parfrac{2\gl \Nz}{n}^{k-j}.
\end{equation}

By \refL{L2},
%there exist constants $B$ and $s$ such that
%$\P\bigpar{e(\gxnls)>Bn}<\P(\gxnls\text{ is $s$-regular})$ and thus
\begin{equation}
  \label{a9c}
\Nz\le Bn.
\end{equation}
Now let \ntoo. 
Since $\Nz/n$ is bounded by \eqref{a9c}, each
subsequence has a subsequence such that $\Nz/n$ converges. Consider
such a subsequence, and assume that $2\Nz/n\to\nu\ge0$.
Furthermore, let $\pz_j\=\nz_j/n$. Then $\ppz\=(\pz_j)\ooo$ is a probability
distribution (the distribution of the degree of a random vertex in a
graph $G$ with $\nn(G)=\nnz$). Since the mean of this distribution is
$\sum_j j\pz_j=2\Nz/n$, which is bounded by \eqref{a9c} as \ntoo, this sequence
of distributions is tight, and by taking a further subsequence we may
assume that the distributions converge, \ie{} that $\nz_j/n\to \pq_j$ 
for some probability distribution $(\pq_j)\ooo$ and every $j\ge0$.
Clearly, this probability distribution is supported on $\cS$ in that
$\pq_j=0$ when $j\notin\cS$.

We treat the cases $\nu>0$ and $\nu=0$ separately. Assume first that $\nu>0$. 
%\mer{}
Divide \eqref{a9} by $n^2$ and let \ntoo{} to find that
\begin{equation*}
%  \label{a9}
\pq_k^2k!^2 \le \pq_j^2j!^2\,
\xpar{\gl \nu}^{k-j}, \qquad j,k\in\cS,
\end{equation*}
and thus
\begin{equation}
  \label{a11a}
\pq_k^2k!^2 \xpar{\gl \nu}^{-k}
\le 
\pq_j^2j!^2\, 
\xpar{\gl \nu}^{-j}, \qquad j,k\in\cS.
\end{equation}
Interchanging $j$ and $k$ we obtain equality in \eqref{a11a}.
Writing $\CC^2$ for the common value, \CCdef\CCa
and $\mu\=\sqrt{\gl\nu}$, we deduce that
\begin{equation}
  \label{a10a}
\pq_j=\CCa\frac{\mu^j}{j!},
\qquad j\in\cS.
\end{equation}

If, instead, $\nu=0$, then 
$\sum_{i\ge1} \nz_i\le 2\Nz=\o(n)$, so $\nz_0/n\to1$ and $\nz_i/n\to0$,
$i\ge1$; hence $\pq_0=1$, and $\pq_j=0$ for $j>0$. (Thus, $\nu=0$ implies
that $0\in\cS$.) Equation \eqref{a10a} holds in this case
also, this time with $\mu=0$. 

Hence, \eqref{a10a} holds in all cases. Summing over $j$ and
recalling \eqref{phis},  we find that
$1=\CCa\phis(\mu)$ and thus $\CCa=\phis(\mu)\qw$.
In particular, $\phis(\mu)>0$, another demonstration that
$\mu=0$ is possible only when $0\in\cS$.

In summary, along the selected subsequence,
\begin{equation}
  \label{a10b}
\pz_j\=
\frac{\nz_j}{n}\to\pq_j=\frac {\mu^j/j!}{\phis(\mu)}
=\PoS(\mu)\set{j},
\qquad j\in\cS,
\end{equation}
which is to say that every subsequence possesses a subsequence
along which
\begin{equation}
  \label{a11}
\ppz=(\pz_j)\ooo \to \PoS(\mu)
\end{equation}
for some $\mu$.
We next identify $\mu$, and show that it is the same for all subsequences.

We constructed $\nny$ above by changing by 2 the degrees of two vertices;
the reason was that this ensures that $\sum_i i \ny_i$ remains even.
If $j$ and $k$ have the same parity, \ie{} $j-k\equiv 0\pmod2$, then
we may also argue as above changing just one vertex degree from $k$ to
$j$. If further $k\ge j$, this leads as in \eqref{a9b} 
to the inequality
\begin{equation}
  \label{a12}
\nz_k k! \le (\nz_j+1)j!\,
\parfrac{2\gl \Nz}{n}^{(k-j)/2}.
\end{equation}
We consider again a subsequence along which \eqref{a11} holds for
some $\mu$. For every $k$ we apply \eqref{a12} with
$j$ the smallest number in $\cS$ of the same parity as $k$.
Using \eqref{a10b} for these (at most two)
$j$, we obtain, with
$\mu_n\=\xpar{2\gl \Nz/n}^{1/2}\to\xpar{\gl\nu}\qq=\mu$,
uniformly for all $k\in\cS$, 
\begin{equation*}
%  \label{a12}
  \begin{split}
\nz_k
&\le
\frac1{k!} n\bigpar{\pq_j +\o(1)}j!\,\mu_n^{k-j} 
=
\frac n{\phis(\mu)k!}\mu_n^{k-j} \bigpar{\mu^j +\o(1)}
\\&
\le	
\frac n{\phis(\mu)k!}(\mu+1)^k \bigpar{1+\o(1)},
  \end{split}
\end{equation*}
since $\mu_n<\mu+1$ for large $n$.
Consequently, for every exponent $r>0$,
\begin{equation*}
  \sum_{k\in\cS} k^r \pz_k
=
  \sum_{k\in\cS} k^r \frac{\nz_k}n
\le
\bigpar{1+\o(1)}
\phis(\mu)\qw \sum_{k\in\cS}\frac{k^r}{k!}(\mu+1)^k=\O(1).
\end{equation*}
In other words, for every $r\in(0,\infty)$,
the distributions $\ppz$ have $r$th
moments that are uniformly bounded in $n$. 
It follows that all moments converge in \eqref{a11},
\ie, for every $r>0$,
\begin{equation}
  \label{a12a}
\sumks k^r\pz_k=
\sumks k^r\frac {\nz_k}n
\to
\sumks k^r\pq_k
=
\sumks k^r\PoS(\mu)\set{k}.
\end{equation}
In particular, $r=1$ yields, using \eqref{epos}
\begin{equation}
  \label{a12b}
\frac{2\Nz}{n}
=
\sumks k \frac{\nz_k}n
\to
\sumks k\pq_k
%=\sumks \frac{k\mu^k}{k!} \phis(\mu)\qw
=\frac{\mu\phis'(\mu)}{\phis(\mu)}.
\end{equation}
On the other hand, we have assumed $2\Nz/n\to\nu$ and
$\mu=\sqrt{\gl\nu}$, whence $\nu=\mu^2/\gl$, so we have the
consistency relation
\begin{equation}
  \label{a13}
\frac{\mu\phis'(\mu)}{\phis(\mu)}=\frac{\mu^ 2}{\gl}.
\end{equation}
In other words, with $\egl$
defined by \eqref{egl}, we have $\mu\in\egl$. 

We summarize the result so far.
\emph{Each subsequence of $n$ possesses a subsequence such that
  \eqref{a11} and \eqref{a12a} hold for some $\mu\in\egl$, \ie,
$\mu\ge0$ satisfies \eqref{a13} and,
further, $\mu=0$ only if $0\in\cS$}.
 
The next step is to find the right solution of \eqref{a13} in the case
when $\egl$ contains two or more points. 

We continue to consider a subsequence for which \eqref{a11} holds.
By applying again \eqref{a12} with $j$ the smallest
odd or even number in $\cS$ as appropriate, 
and using $\nz_j\le n$ and \eqref{a9c}, 
we see that for some constants $\CC\CCdef{\Ce},\CC\CCdef{\Cm}$,
\begin{equation}
  \label{a14}
\nz_k k! \le \Ce n \Cm^k.
\end{equation}
If $k\ge\log n$, then by Stirling's formula, for large $n$,
\begin{equation*}
%  \label{a14}
\log\bigpar{k! \,\Cm^{-k}}
\ge k\log k - k(1+\log\Cm) \ge 2k >\log(\Ce n),
\end{equation*}
and thus \eqref{a14} yields $\nz_k<1$. Consequently, for large $n$,
\begin{equation}
  \label{a14b}
\nz_k=0 \text{ for all } k\ge \log n.
\end{equation}

Let us now estimate $\annz=\max_{\nn}\ann$. 
%Consider again a subsequence such that \eqref{a11} holds.
By \eqref{a8} and
Stirling's formula, recalling \eqref{a5s},
\begin{multline*}
  \log \annz = n\log n - n + \O(\log n) 
- \sum_i\bigpar{\nz_i\log \nz_i - \nz_i + \O(\log(\nz_i+1))}
\\
- \sum_i\nz_i\log (i!)
+ \Nz\bigpar{\log(2\Nz)-1}+\O(1)+\Nz\log(\gl/n).
\end{multline*}
By \eqref{a14b}, we only have to sum over $i\le\log n$, and thus the
sum of all $\O$ terms is $\O(\log^2 n)$. Thus, with $y\=\sum_i i \pz_i=2\Nz/n$,
%\begin{multline*}
\begin{equation}\label{a15a}
  \begin{split}
\frac1n \log \annz  
&= \log n - 1 
- \sum_i\pz_i\bigpar{\log n+\log \pz_i-1+\log i!}
\\&\hskip8em
+ \frac y2\bigpar{\log y+\log\gl-1}+\o(1)
\\
&=
- \sum_i\pz_i{\log (\pz_i i!)}
+ \frac y2\bigpar{\log y+\log\gl-1}+\o(1)
.
%\end{multline*}
  \end{split}
\end{equation}
For each $i$, $\pz_i\to\pq_i$ by \eqref{a11}, and in addition, 
by \eqref{a12b} and \eqref{a13},
\begin{equation}\label{a15x}
  y
=
\sum_i i \pz_i
\to
%\nu\=
\sum_i i \pq_i
=
\mu^2/\gl.
\end{equation}
Now, $x\log x\ge - e\qw$ on $\ooox$, whence
$\pz_i\log (\pz_i i!)\ge -e\qw/i!$, and by \eqref{a14},
\begin{equation*}
  \pz_i\log (\pz_i i!)
\le \pz_i\log i!
\le \pz_i i\log i
\le \pz_i i^2
=\O\bigpar{i^2\Cm^i/i!}.
\end{equation*}
Consequently, by dominated convergence,
\begin{equation}\label{a16}
\frac1n \log \annz  
\to 
- \sum_i\pq_i{\log (\pq_i i!)}
+ \frac{\mu^2}{2\gl}\Bigpar{\log \frac{\mu^2}{\gl}+\log\gl-1}
.
\end{equation}
Furthermore, by 
\eqref{a10b} and 
\eqref{a15x},
%since $\sum_i i \pq_i=\mu^2/\gl$ by \eqref{epos} and \eqref{a13}, 
if $\mu>0$, 
\begin{equation}\label{a16b}
\sum_i\pq_i{\log (\pq_i i!)}  
=
\sum_i\pq_i\bigpar{i\log\mu-\log\phis (\mu)}  
=
\frac{\mu^2}{\gl}\log\mu-\log\phis (\mu);  
\end{equation}
 if $\mu=0$ this holds trivially with all terms zero.
Hence, \eqref{a16} yields, using \eqref{a13},
\begin{equation}\label{a16a}
  \begin{split}
\frac1n \log \annz  
&\to 
-\frac{\mu^2}{\gl}\log\mu+\log\phis (\mu)
+ \frac{\mu^2}{2\gl}\bigpar{2\log \mu-1}
\\&
=
\log\phis (\mu)
- \frac{\mu^2}{2\gl}
=
\log\phis (\mu)
- \frac{\mu\phis'(\mu)}{2\phis(\mu)}
.	
  \end{split}
\end{equation}

Conversely, take any finite sequence $(p_i)_{i=0}^M$ with $p_i\ge0$,
$\sum_i p_i=1$ and $p_i=0$ when $i\notin\cS$.
Define $n_i=nx_i$, rounded up or down to integers, preserving $\sum_i
n_i = n$ and possibly adjusting two of them by $\pm1$ so that $\sum_i in_i$ is
even.
As \ntoo, we then obtain as in \eqref{a15a}--\eqref{a16}
(but simpler, since the sums are finite), with $\nu\=\sum_i ip_i$,
\begin{equation*}%\label{a16}
\frac1n \log \ann  
\to 
- \sum_ip_i{\log (p_i i!)}
+ \frac{\nu}{2}\Bigpar{\log \nu+\log\gl-1}
.
\end{equation*}
Since, by definition, $\annz$ is maximal, $\annz\ge\ann$, and thus
\begin{equation}\label{a17}
\liminf_\ntoo\frac{\log \annz}n 
\ge
- \sum_i p_i{\log (p_i i!)}
+ \frac{\nu}{2}\Bigpar{\log \nu+\log\gl-1}
.
\end{equation}
We have shown \eqref{a17} for any probability distribution $(p_i)$ on
$\cS$ with finite support.
More generally, let  $(p_i)$ be a
probability distribution supported on $\cS$ with 
$\nu\=\sum_i ip_i<\infty$ and
$\sum_i p_i{\log (p_i i!)}<\infty$.
For $M\ge\min\cS$, 
let $p_i^{(M)}\=p_i\big/\!\sum_{j\le M}p_j$ for $i\le M$ and apply
\eqref{a17} to $(p_i^{(M)})_{i=0}^M$.
It is easily seen that the \rhs{} of \eqref{a17} converges as $M\to\infty$ 
to the corresponding value for $(p_i)$,
showing
that \eqref{a17} holds for $(p_i)$ also.

In particular, for any $\mu\in\egl$,
we can use \eqref{a17} with 
$p_i=\PoS(\mu)\set{i}$ given
by \eqref{pos} and, by \eqref{epos},
\begin{equation*}
 \nu\=
%\sum_i ip_i\to
\sum_i i\PoS(\mu)\set{i}
=\frac{\mu\phis'(\mu)}{\phis(\mu)}
=\frac{\mu^ 2}{\gl}.
\end{equation*}
Hence, \eqref{a17} yields,
by the calculations in \eqref{a16b} and \eqref{a16a},
\begin{equation}\label{allahelgon}
  \begin{split}
\liminf_\ntoo\frac{\log \annz}n 
&\ge
%-\frac{\mu^2}{\gl}\log\mu+\log\phis (\mu)
%+ \frac{\mu^2}{2\gl}\bigpar{2\log \mu-1}
%\\&
%=
%\log\phis (\mu)
%- \frac{\mu^2}{2\gl}
%=
\log\phis (\mu)
- \frac{\mu\phis'(\mu)}{2\phis(\mu)}
=\psis(\mu)
,	
  \end{split}
\end{equation}
for every $\mu\in\egl$.
Comparing this to \eqref{a16a}, we see that if \eqref{a11} holds for
some subsequence and some $\mu\in\egl$, then this $\mu$ must maximize
$  \psis(\mu)=\psisx(\mu;\gl)$ 
over $\egl$, in other words, $\mu=\mux$ as defined in \refT{T1}.
In particular, this shows that every subsequence possesses a subsequence
such that \eqref{a11} holds with a fixed $\mu=\mux$; hence
\eqref{a11} holds
for the full sequence of \ntoo, and
\begin{equation}
  \label{a11x}
\ppz=(\pz_j)\ooo \to \PoS(\mux).
\end{equation}

\begin{remark}\label{Rpsi2} 
  We have for simplicity considered $\mu\in\egl$ only in \eqref{allahelgon};
  for general $\mu\ge0$ the lower bound obtained from this argument 
takes the form,
with $\nu=\mu\phis'(\mu)/\phis(\mu)$,
\begin{equation*}
  \begin{split}
%\liminf_\ntoo\frac{\log \annz}n \ge
\log\phis (\mu)
+\frac{\nu}2\Bigpar{\log\frac{\nu\gl}{\mu^2}-1}
=
%\log\phis (\mu)
%+ \frac{\mu\phis'(\mu)}{2\phis(\mu)}
%\Bigpar{\log \frac{\gl\phis'(\mu)}{\mu\phis(\mu)}-1}.
\psisxx(\mu;\gl)
  \end{split}
\end{equation*}
defined and studied in
\refS{Smu}. 
\end{remark}

We have so far studied the mode $\ppz$ of the degree distribution. We
now show that the distribution is concentrated close to the mode.

\begin{lemma}
  \label{L3}
For every $\eps>0$, there exists $\cc=\ccx(\eps)>0\ccdef{\ccliii}$ such
that, if $n$ is large 
enough then for every $\nn\in\cNSn$ with 
$\dtv\bigpar{\nn/n,\PoS(\mux)}\ge \eps$, 
$\ann\le e^{-\ccx n}\annz$.
\end{lemma}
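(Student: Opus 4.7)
The plan is first to reduce to $\nn\in\cNSn$ with $N(\nn)\le Bn$. By \refL{L2} the partial sum $\sum_{\nn:\,N>Bn}\ann<e^{-n}\annz$, so every individual $\nn$ with $N>Bn$ already satisfies $\ann<e^{-n}\annz$, which is stronger than claimed. For the remaining $\nn$, I apply Stirling's formula to every factorial in \eqref{a8}: the upper bound $\log n!\le n\log n-n+O(\log n)$, the upper bound $\log(2N-1)!!\le N\log(2N)-N+O(1)$ derived from \eqref{a5s}, and the clean lower bound $\log m!\ge m\log m-m$ applied termwise to each $\log n_j!$. Since $\sum_j n_j=n$ the leftover linear terms cancel, and with $y\=2N/n=\nu(\pix(\nn))$ and $\nu(\pix)\=\sum_j j\pi_j$ one obtains the uniform upper bound
\begin{equation*}
\frac1n\log\ann\le \Phi(\pix(\nn),\gl)+O(\log n/n),\qquad
\Phi(\pix,\gl)\=-\sum_j\pi_j\log(\pi_j\,j!)+\tfrac{\nu(\pix)}2\bigpar{\log(\nu(\pix)\gl)-1},
\end{equation*}
the free-energy functional implicit in \eqref{a15a} and \refR{Rpsi2}.

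The next step is a variational identity for $\Phi$. For $\pix$ supported on $\cS$ with finite mean, let $\mu(\pix)\ge 0$ be the unique solution of $\mu\phis'(\mu)/\phis(\mu)=\nu(\pix)$, existence and strict monotonicity of the map on the right following from \refT{Tmonmu}. Direct expansion of the Kullback--Leibler divergence $D(\pix\,\|\,\sigma)\=\sum_j\pi_j\log(\pi_j/\sigma_j)$ using \eqref{pos} gives
\begin{equation*}
\Phi(\pix,\gl)=\psisxx(\mu(\pix),\gl)-D\bigpar{\pix\,\|\,\PoS(\mu(\pix))}.
\end{equation*}
Since $D\ge 0$ and, by \refT{Tmax}(ii), $\psisxx(\cdot,\gl)$ has unique global maximum $\psis(\mux)$ at $\mu=\mux$, this yields $\Phi(\pix,\gl)\le\psis(\mux)$ with equality iff $\pix=\PoS(\mux)$, recovering the bound already implicit in \eqref{allahelgon}.

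The main step is the \emph{uniform gap}: for every $\eps>0$ there is $\gamma=\gamma(\eps)>0$ such that $\Phi(\pix,\gl)\le\psis(\mux)-\gamma$ whenever $\nu(\pix)\le 2B$ and $\dtv(\pix,\PoS(\mux))\ge\eps$. If not, a sequence $\pix^{(m)}$ in this set satisfies $\psis(\mux)-\Phi(\pix^{(m)},\gl)\to 0$; the decomposition then forces both $\psis(\mux)-\psisxx(\mu(\pix^{(m)}),\gl)\to 0$ and $D(\pix^{(m)}\,\|\,\PoS(\mu(\pix^{(m)})))\to 0$. The constraint $\nu(\pix^{(m)})\le 2B$ confines $\mu(\pix^{(m)})$ to a fixed compact interval on which $\mux$ is the unique maximizer of $\psisxx$; hence $\mu(\pix^{(m)})\to\mux$, and continuity of $\mu\mapsto\PoS(\mu)$ in total variation (immediate from \eqref{pos} and Scheff\'e) yields $\dtv(\PoS(\mu(\pix^{(m)})),\PoS(\mux))\to 0$. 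Pinsker's inequality converts the vanishing KL bound into $\dtv(\pix^{(m)},\PoS(\mu(\pix^{(m)})))\to 0$, and the triangle inequality contradicts $\dtv(\pix^{(m)},\PoS(\mux))\ge\eps$.

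Combining the three parts with $\frac1n\log\annz\to\psis(\mux)$ (from \eqref{a16a}), any $\nn$ with $N\le Bn$ and $\dtv(\pix(\nn),\PoS(\mux))\ge\eps$ satisfies $\ann\le e^{-\gamma(\eps)n/2}\annz$ for all large $n$; combined with the trivial bound for $N>Bn$, this proves the lemma. The main obstacle is the uniform-gap step: its proof leans essentially on the clean KL decomposition of $\Phi$ and on the bounded-edge hypothesis from \refL{L2}, which keeps the implicit parameter $\mu(\pix)$ in a compact region where uniqueness of $\mux$ as a maximizer of $\psisxx$ can be exploited.
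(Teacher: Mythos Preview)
Your argument is correct and takes a genuinely different route from the paper. The paper first establishes a local-improvement lemma (\refL{L3b}): if $\nn$ is far from $\PoS(\mux)$ and $\ann$ is not already exponentially small relative to $\annz$, then some neighbour $\nny\in\cNSn$ with $\dtv(\nn/n,\nny/n)\le 2/n$ satisfies $\ann\le(1-c)\anny$. This is proved by contradiction, re-running for the hypothetical ``almost local maximum'' $\nn$ the subsequential analysis already carried out for the mode $\nnz$ (equations \eqref{a9}--\eqref{a16a}), and invoking uniqueness of $\mux$ in $\eglo$. \refL{L3} then follows by iterating \refL{L3b}: a chain of at least $\eps n/2$ local improvements produces the exponential factor. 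Your approach bypasses the local moves entirely by passing at once to the free-energy functional $\Phi$, exhibiting the clean decomposition $\Phi(\pix,\gl)=\psisxx(\mu(\pix);\gl)-D\bigpar{\pix\,\|\,\PoS(\mu(\pix))}$, and invoking \refT{Tmax}(ii) to identify $\mux$ as the unique global maximiser of $\psisxx$. This is more conceptual and makes explicit the variational structure that the paper only hints at in \refR{Rpsi2}; the paper's argument, by contrast, stays within the discrete estimates of \refS{SpfT1x} and does not call on \refS{Smu}.

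One small technical point: your claim that $\nu(\pix^{(m)})\le 2B$ forces $\mu(\pix^{(m)})$ into a fixed compact interval holds only when $\cS$ is unbounded (so that $\mu\mapsto\mu\phis'(\mu)/\phis(\mu)$ has unbounded range). For finite $\cS$ that map is bounded above by $\max\cS$, and $\mu(\pix^{(m)})$ could escape to infinity; but then $\psisxx(\mu^{(m)};\gl)$ tends to a limit strictly below $\psis(\mux)$ (since, by \eqref{ps2} and \eqref{ps2x}, $\psisxx$ is eventually strictly decreasing), which still yields the desired contradiction. Similarly, when $0\notin\cS$ and $\pix$ is the point mass at $\min\cS$, one should take $\mu(\pix)=0$ and use the continuous extension of $\psisxx$ there. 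With these boundary cases noted, and the trivial case $|\cS|=1$ set aside, your proof stands.
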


We will first show a weaker statement.

\begin{lemma}
  \label{L3b}
For every $\eps>0$, there exists $\cc=\ccx(\eps)>0\ccdef{\ccliiib}$
such that, if 
$n$ is large 
enough then for every $\nn\in\cNSn$ with 
$\dtv\bigpar{\nn/n,\PoS(\mux)}\ge \eps$, either
$\ann\le e^{-\ccx n}\annz$, or there exists $\nny\in\cNSn$ with 
$\dtv(\nn/n,\nny/n)\le 2/n$ and $\ann\le(1-\ccx)\anny$.
\end{lemma}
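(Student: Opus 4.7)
My plan is to prove the contrapositive. Fix $\nn\in\cNSn$, set $p_i:=n_i/n$ and $y:=2N/n=\sum_i ip_i$, and assume both $\ann>e^{-cn}\annz$ and that every single swap $\nny$ (obtained by changing two vertices of degree $k\in\cS$ to two of degree $j\in\cS$, for $n_k\ge2$) satisfies $\anny/\ann<1/(1-c)$. I wish to conclude, for $c=c(\eps)$ sufficiently small, that $\dtv(\nn/n,\PoS(\mux))<\eps$.

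Taking logarithms in the swap formula \eqref{a9}, for every such pair with $n_k\ge2$,
\[
2\log(p_k k!)-2\log(p_j j!)+(j-k)\log(y\gl)
\le \O(c)+\O(1/n_j)+\O(|j-k|^2/n),
\]
and swapping the roles of $j,k$ (when $n_j\ge2$) yields the reverse inequality. Writing $a_i:=\log(p_i i!)-\tfrac{i}{2}\log(y\gl)$, this says that $|a_j-a_k|$ is uniformly small whenever $n_j,n_k\ge2$. Hence there is a single constant $A$ with $a_i\approx A$, equivalently $p_i\approx e^{A}\mu^i/i!$ where $\mu:=\sqrt{y\gl}$. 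Summing over $i\in\cS$ forces $e^{A}\approx1/\phis(\mu)$, so $p$ is close in $\dtv$ to $\PoS(\mu)$. Substituting $y=\sum_i ip_i\approx\mu\phis'(\mu)/\phis(\mu)$ into $\mu^2=y\gl$ yields the characteristic equation \eqref{chareq} up to $\o(1)$, placing $\mu$ near some $\mu_0\in\egl$.

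Now I exploit finiteness of $\egl$ (\refT{Tegl}) and the uniqueness hypothesis $\eglo=\set\mux$. If $\mu_0=\mux$, continuity of $\mu\mapsto\PoS(\mu)$ in $\dtv$ gives $\dtv(\nn/n,\PoS(\mux))<\eps$ for $c$ small enough, as desired. Otherwise $\mu_0\in\egl\setminus\set{\mux}$, and since $\egl$ is finite the gap $\delta:=\psis(\mux)-\max_{\mu\in\egl\setminus\set{\mux}}\psis(\mu)$ is strictly positive. The calculation \eqref{a15a}--\eqref{a16a}, applied with $p_i$ in place of $\pz_i$, shows $n^{-1}\log\ann\approx\psisxx(\mu;\gl)$, which at a root of \eqref{chareq} equals $\psis(\mu_0)$ (\cf{} \refR{Rpsi2}); combining with the lower bound \eqref{allahelgon} at $\mux$ yields $\ann\le e^{-(\delta/2)n}\annz$ for large $n$, contradicting $\ann>e^{-cn}\annz$ once $c<\delta/2$.

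The chief technical obstacle is uniformity of the above approximations across the unbounded range $k\in\cS$ and when some $n_j$ is small or zero. I would first establish a tail bound $n_k=0$ for $k\ge\log n$ by the same argument that gave \eqref{a14b}, but now powered by the local swap-optimality hypothesis at $\nn$ rather than global optimality at $\nnz$: one exhibits for each large $k$ a specific swap $k\to j$ (with $j$ the smallest element of $\cS$ of the appropriate parity) that multiplies $\ann$ by a factor $\gg1$ unless $n_k$ is small. This reduces the analysis to $k\le\log n$, making both $\O(|j-k|^2/n)$ and $\O(\log n/n)$ error terms negligible. The indices with $n_j\in\set{0,1}$ are handled by the same swap-in argument: swapping into a nearly empty bin gives a combinatorial factor $\sim n_k(n_k{-}1)/2$ in \eqref{a9}, so local optimality forces $n_k$ to be no larger than $O(\mu^k/k!)\cdot n$, i.e., matching the Poisson envelope. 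The parity constraint is automatic since each swap changes $\sum_i in_i$ by the even number $2(j-k)$.
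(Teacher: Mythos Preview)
Your proposal has the right logical structure and uses the same key ingredients as the paper: local swap near-optimality forces $\nn/n$ close to some $\PoS(\mu)$ with $\mu$ near a root of the characteristic equation, and the entropy computation \eqref{a15a}--\eqref{a16a} combined with the assumption $\ann>e^{-cn}\annz$ then rules out all roots other than $\mux$. You have also correctly identified the technical obstacles (tail truncation, small bins, uniformity in $k$).

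The paper's proof is the same argument implemented differently: rather than fix $c$ and argue quantitatively, it assumes the lemma fails, extracts a \emph{sequence} of counterexamples with $c\to0$ and $n\to\infty$, and passes to a convergent subsequence. The local swap-optimality then becomes the \emph{exact} inequalities \eqref{a9}, \eqref{a9b}, \eqref{a12} up to factors $1+\o(1)$, and one can literally repeat the analysis already carried out for the mode $\nnz$ (including \eqref{a14b} and \eqref{a16a}) to obtain $\nn/n\to\PoS(\mu)$ for some $\mu\in\egl$ with $\psis(\mu)=\psis(\mux)$, forcing $\mu=\mux$ and contradicting $\dtv\ge\eps$. This compactness trick sidesteps precisely the uniformity and small-bin issues you flag: the cases $n_j\in\set{0,1}$ and the error terms $\O(|j-k|^2/N)$ vanish after dividing by $n^2$ and taking limits, so no separate treatment is needed. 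Your direct approach would ultimately succeed, but the subsequence argument reuses earlier work wholesale and is considerably shorter.
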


\begin{proof}
Suppose this fails. Then there exists $\eps>0$ and a sequence
$\nn=\nn\wwx n\in\cNSn$  with 
$n\to\infty$, such that
$\dtv\bigpar{\nn/n,\PoS(\mux)}\ge \eps$, 
$\ann = e^{-\o(n)}\annz$ and $\anny\le(1+\o(1))\ann$
for all $\nny\in\cNSn$ with 
$\dtv(\nn/n,\nny/n)\le 2/n$,
\ie{}, for all $\nny\in\cNSn$ with $\sum_i|n_i-\ny_i|\le 4$.

We now repeat much of the arguments presented above for the mode $\nnz$.
First, we obtain that \eqref{a9}, \eqref{a9b} and \eqref{a12} hold for
these $\nn$, with an extra factor $1+\o(1)$ on the \rhs{s}, uniformly
in all $j,k\in\cS$ (with $k\ge j$ for
\eqref{a9b} and $k\ge j$ and $k\equiv j\pmod2$ for \eqref{a12}).
Furthermore, by \refL{L2} and the assumption  
$\ann = e^{-\o(n)}\annz$, 
$N\=\half\sum_i i n_i \le Bn$ (for large $n$).
It follows, as above, that by considering a subsequence we may assume
that $2N/n\to\nu\in[0,\infty)$ and $\nn/n\to\ppy$ for some probability
  distribution $\ppy$, where, again as above, necessarily
$\ppy=\PoS(\mu)$ for some $\mu\in\egl$ and \eqref{a16a} holds for
  $\nn$.

Since $\log\ann=\o(n) + \log\annz$, this shows that
$\psis(\mu)=\psis(\mux)$, and thus $\mu=\mux$, since $\mux$ is
assumed to be a unique maximum point.
Consequently, $\nn/n\pto\PoS(\mux)$, which contradicts
$\dtv\bigpar{\nn/n,\PoS(\mux)}\ge \eps$.
\end{proof}

\begin{proof}[Proof of \refL{L3}]
By \refL{L3b}, if $n$ is large
  enough,
$\dtv\bigpar{\nn/n,\PoS(\mux)}\ge \eps$, and
$\ann> e^{-\ccliiib n}\annz$,
there exists $\nn\wwi=\nn'$ such that
$\dtv(\nn/n,\nn\wwi/n)\le 2/n$ and $\ann\le(1-\ccx)z(\nn\wwi)$; in
  particular, $z(\nn\wwi)>\ann$.

If also $\dtv\bigpar{\nn\wwx1/n,\PoS(\mux)}\ge \eps$, we iterate and
  find
$\nn\wwx2$, and so on.
This gives a sequence $\nn\wwx0=\nn, \nn\wwx1, \dots,\nn\wwx L$,
  where for $l<L$ we have 
$\dtv(\nn\wwx l/n,\nn\wwx{l+1}/n)\le 2/n$ and
  $z(\nn\wwx{l})\le(1-\ccx) z(\nn\wwx{l+1})$, while
$$
\dtv\bigpar{\nn\wwx L/n,\PoS(\mux)}< \eps.
$$

If further
$\dtv\bigpar{\nn/n,\PoS(\mux)}\ge 2\eps$, it follows that 
$\dtv\bigpar{\nn/n,\nn\wwx{L}/n}> \eps$, and thus the number of steps
$L>\eps n/2$. Consequently,
\begin{equation*}
  \ann
\le(1-\ccx)^L z\bigpar{\nn\wwx L}
\le(1-\ccx)^L \annz
\le\exp\bigpar{-\tfrac12\ccx\eps n} \annz.
\end{equation*}
This proves \refL{L3} for $2\eps$, with
$\ccliii(2\eps)=\min\{1,\tfrac12\eps\}\ccliiib(\eps)$.
\end{proof}

We now complete the proof of \refT{T1x}.
Let $\eps\ge0$ and let, with $B$ as in \refL{L2}
and $N=\half\sum_i in_i$,
\begin{align*}
 A_1&\=\set{\nn\in\cNSn:N>Bn}, 
\\
\aae&\=
\set{\nn\in\cNSn:N\le Bn \text{ and }\dtv\bigpar{\nn/n,\PoS(\mux)}\ge\eps}.
\end{align*}

\begin{lemma}
  \label{LA}For every $\eps\ge0$,
$|\aae|\le |\aao|
=e^{\o(n)}$.
\end{lemma}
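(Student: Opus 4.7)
The inequality $|\aae|\le|\aao|$ is immediate from $\aae\subseteq\aao$, so the real content of the lemma is the estimate $|\aao|=e^{\o(n)}$. My plan is to encode each $\nn\in\aao$ as an integer partition of a bounded integer and then appeal to a standard sub-exponential bound on the partition function.

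Given $\nn=(n_0,n_1,\dots)\in\aao$, set $M\=\sum_i i n_i = 2N$, which by definition of $\aao$ satisfies $M\le 2Bn$. Reading each positive degree $i$ with multiplicity $n_i$, the tail sequence $(n_1,n_2,\dots)$ records an integer partition of $M$ (with $n_i$ parts equal to $i$); the remaining entry $n_0=n-\sum_{i\ge 1}n_i$ is then determined from $n$. This map from $\{\nn\in\aao:\sum_i in_i=M\}$ to the set of integer partitions of $M$ is injective, so
\[
|\aao|\;\le\;\sum_{M=0}^{\lfloor 2Bn\rfloor} p(M),
\]
where $p(M)$ denotes the number of integer partitions of $M$. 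The additional requirements encoded in $\cNSn$ (namely that $\sum_i i n_i$ is even and $n_i=0$ for $i\notin\cS$) only reduce the count, so this bound still applies.

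By the Hardy--Ramanujan estimate $p(M)=\exp(\O(\sqrt M))$, the right-hand side is at most $(2Bn+1)\exp(\O(\sqrt n))=e^{\o(n)}$, as required. There is no serious obstacle: the only non-elementary input is the sub-exponential bound on $p(M)$, which is classical, and indeed any bound of the form $p(M)\le\exp(\o(M))$ would suffice here.
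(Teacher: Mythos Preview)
Your proof is correct but takes a genuinely different route from the paper. The paper does not invoke the partition function; instead it splits the index range at $\sqrt n$. For $i\le\sqrt n$ each $n_i$ lies in $\{0,\dots,n\}$, giving at most $(n+1)^{\sqrt n+1}=\exp\bigl(\O(\sqrt n\log n)\bigr)$ choices for $(n_i)_{i\le\sqrt n}$. For $i>\sqrt n$ the constraint $\sum_i i n_i\le 2Bn$ forces $\sum_{i>\sqrt n}n_i\le 2B\sqrt n$ and $n_i=0$ for $i>2Bn$, so the large-index part can be recorded as a list of at most $2B\sqrt n$ degrees in $[\sqrt n,2Bn]$, contributing at most $(2Bn)^{2B\sqrt n}=\exp\bigl(\O(\sqrt n\log n)\bigr)$ choices. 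The product is $\exp\bigl(\O(\sqrt n\log n)\bigr)$. Your partition encoding is cleaner and even yields the slightly sharper $\exp\bigl(\O(\sqrt n)\bigr)$, but it imports the Hardy--Ramanujan bound as a black box; the paper's argument is entirely self-contained at the cost of an irrelevant extra $\log n$. Both rest on the same underlying idea: the edge bound $\sum_i i n_i\le 2Bn$ constrains $\nn$ to a set of sub-exponential size.
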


\begin{proof}
  Suppose $\nn=(n_i)_i\in\aao$. For each $i$, $n_i\le n$, and thus the
  number of choices of $(n_i)_{i\le\sqrt n}$ is at most 
$(n+1)^{\sqrt n+1}=\exp\bigpar{\O(\sqrt n\log n)}$.
Furthermore, $\sum_i i n_i = 2N \le 2Bn$, and thus $n_i=0$ for $i>2Bn$
  and
  \begin{equation*}
\sum_{i>\sqrt n} n_i \le \frac{2Bn}{\sqrt n}=2B\sqrt n,	
  \end{equation*}
so $(n_i)_{\sqrt n<i\le 2BN}$ may be described by a sequence of at
most $2B\sqrt n$ numbers in the range $[\sqrt n,2Bn]$ (the degrees of
the corresponding vertices). Hence, the number of choices of 
$(n_i)_{i>\sqrt n}$ is at most 
$(2Bn)^{2B\sqrt n}=\exp\bigpar{\O(\sqrt n\log n)}$. 

Combining the two parts, 
$|\aao|=\exp\bigpar{\O(\sqrt n\log n)}$. 
\end{proof}

Now, fix $\eps>0$. By Lemmas \ref{L2}, \refand{L3}{LA},
$$
\P\lrpar{\dtv\bigpar{\pix(\gnlns),\PoS(\mux)}\ge\eps}
\le e^{-n} + \frac{|\aae| e^{-\ccliii n}\annz}{\zxnls}
\le e^{-\cctic n},
$$ 
for some $\cctic>0$ and all large $n$.
This proves \eqref{t1c} and hence \eqref{t1a}
(for $\gxnls$).

A similar calculation with $\eps=0$ yields
\begin{equation*}
 % \begin{split}
\annz\le\zxnls =
\sum_{\nn\in A_1}\ann 
+ \sum_{\nn\in \aao}\ann 
\le  e^{\o(n)}\annz,
%  \end{split}
\end{equation*}
and thus $\log\zxnls=\log\annz+\o(n)$,
which together with \eqref{a16a} implies
$n^{-1}\log\zxnls\to\psis(\mux)$.
By \eqref{b1},
this further yields \eqref{zx}. 
%\eqref{t1d} for $\gxnls$.

\begin{lemma}
  \label{LN}
Uniformly in all $k\ge0$,
  \begin{equation*}
\E n_k(\gxnls) 
\le 
\CC n e^{-\cc k}.
%\frac{\CC^k}{k!}	n + n e^{-\max(n,\cc k)}.
  \end{equation*}\CCdef\CCLN \ccdef\ccLN
Moreover, for any $\gl_0$, this holds uniformly in $\gl\le\gl_0$.
\end{lemma}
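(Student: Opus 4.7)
The plan is to reduce the bound to a one-vertex switching estimate on the partition function $\zxnls$ and then exploit the factorial decay that comes from converting a degree $k$ to a degree $j$ for small $j$.

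First I would note that $\E n_k(\gxnls)$ is just $\zxnls^{-1}\sum_{\nn}n_k(\nn)\ann$, and that for $k\notin\cS$ or for $k$ less than some fixed threshold $K$ the trivial estimate $n_k\le n$ already yields the desired bound $\E n_k\le \CCLN n e^{-\ccLN k}$ after enlarging the constants. So the content is in the range $k>K$. I would also immediately truncate via \refL{L2}: the contribution from $\set{\nn:N>Bn}$ to $\sum_\nn n_k\ann$ is bounded by $ne^{-n}\zxnls$, which is negligible.

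For the main contribution, fix $k\in\cS$ large, and let $j$ be the smallest element of $\cS$ with $j\equiv k\pmod 2$; then $j$ is bounded by a constant depending only on $\cS$, and the parity agreement ensures that the one-vertex switching
\begin{equation*}
\nn\longmapsto\nn',\quad n_k'=n_k-1,\ n_j'=n_j+1,\ n_i'=n_i\text{ otherwise},
\end{equation*}
maps $\cNSn$ (with $n_k\ge1$) injectively into $\cNSn$. Using the explicit formula \eqref{a8} for $\ann$ and the bound $(2N-1)!!/(2N'-1)!!\le(2N)^{(k-j)/2}$, a direct computation analogous to \eqref{a9b}/\eqref{a12} gives
\begin{equation*}
n_k\ann\le (n_j+1)\,z(\nn')\cdot\frac{j!}{k!}\Bigpar{\frac{2N\gl}{n}}^{(k-j)/2}.
\end{equation*}
On $\set{\nn:N\le Bn}$ the last factor is bounded by $(2B\gl)^{(k-j)/2}$. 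Summing over $\nn$ and using $n_j'\le n$ and the injectivity of the switching yields
\begin{equation*}
\sum_{\nn:\,N\le Bn,\,n_k\ge 1} n_k(\nn)\ann
\le \frac{j!\,(2B\gl)^{(k-j)/2}}{k!}\cdot n\cdot \zxnls.
\end{equation*}
Combining with the tail estimate from \refL{L2} and dividing by $\zxnls$ gives
\begin{equation*}
\E n_k(\gxnls)\le n\cdot\frac{j!\,(2B\gl)^{(k-j)/2}}{k!}+ne^{-n},
\end{equation*}
and Stirling's formula bounds the first term by $\CCLN n e^{-\ccLN k}$ for $k$ past a threshold depending only on $\cS$ and $\gl$, while the threshold range is absorbed by trivially enlarging constants.

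The main technical nuisance (rather than a genuine obstacle) is the parity bookkeeping: we must pick $j$ of the right parity, handle the small/exceptional cases $k\le j$ or $k\notin\cS$, and verify that $B$, $j$ and the implicit constants in Stirling's estimate can be chosen uniformly for $\gl\le\gl_0$. Uniformity of $B$ is explicit in \refL{L2}; everything else depends only on $\cS$ and $\gl_0$, giving the uniformity claimed in the lemma.
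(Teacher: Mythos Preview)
Your approach is essentially identical to the paper's: the same one-vertex parity-preserving switching $k\to j$, the same comparison via \eqref{a8}, the same truncation via \refL{L2}, and the same final bound of the shape $n\cdot C^k/k!$.

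There is, however, one genuine gap. Your residual term $ne^{-n}$ from the truncation $\set{N>Bn}$ is \emph{not} bounded by $\CCLN n e^{-\ccLN k}$ uniformly in $k$: once $k>2Bn$ (taking $\ccLN=1/(2B)$), you would need $e^{-n}\le C e^{-k/(2B)}$, which fails. Equivalently, your switching argument only sees $\nn$ with $N\le Bn$, but for $k>2Bn$ any $\nn$ with $n_k\ge1$ already has $N>Bn$, so the entire expectation sits in the tail you bounded too crudely. The paper handles this regime separately: for $k>2Bn$ it uses the \emph{full} tail bound in \refL{L2}, namely $\P(\edg(\gxnls)>x)<e^{-x/B}$ for $x\ge Bn$, applied with $x=ki/2$ to get $\P(n_k\ge i)\le e^{-ki/(2B)}$ and hence $\E n_k\le 2e^{-k/(2B)}$. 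Inserting this one extra paragraph fixes your argument completely.
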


\begin{proof}
Let $j$ be the smallest element of $\cS$ with the same parity as $k$.
Given any $\nn\in\cNSn$, let $\nny\in\cNSn$ be given by
$\ny_j\=n_j+1$, $\ny_k\=n_k-1$ and $\ny_i\=n_i$, $i\neq j,k$
(assuming $j<k$ and $n_k\ge1$; otherwise $\nny=\nn$).
By \eqref{a8}, \cf{} the argument yielding \eqref{a12},
\begin{equation*}
\ann\le\anny \frac{(n_j+1)j!}{n_k k!}
\parfrac{2\gl N}{n}^{(k-j)/2},
\end{equation*}
and thus
\begin{equation}\label{c1}
n_k\ann\le \frac{\CC n}{k!}
\parfrac{2\gl N}{n}^{(k-j)/2}
\anny.
\end{equation}
%The case $k=0$ is trivial. For $k\ge 1$,
\refL{L2} and \eqref{c1} 
imply
\begin{equation*}
\begin{split}
    \sum_{\nn\in\cNSn}n_k\ann
&=
  \sum_{\nn\in A_1}n_k\ann
+  \sum_{\nn\in\aao,\;n_k>0}n_k\ann
\\&
\le n e^{-n} \annz+
\frac{\CCx n}{k!}
\xpar{2B\gl}^{(k-j)/2}
\sum_{\nny\in\cNSn}\anny
\\&
\le n e^{-n} \zxnls+
\CC\frac{\CC^k n}{k!}
\zxnls
\end{split}
\end{equation*}
and the result for $0\le k\le 2Bn$ follows by dividing by $\zxnls$,
with $\ccLN=1/(2B)$.

Finally, if $k>2B n$, then for every $i\ge1$ we have
$n_k \ge i \implies N\ge k n_k/2 \ge ki/2 >  Bn$, and thus by \refL{L2}
\begin{equation*}
  \P\bigpar{n_k(\gxnls)\ge i}
\le
  \P\bigpar{\edg(\gxnls)\ge ki/2}
\le e^{-ki/(2B)}.
\end{equation*}
Hence,
$
\E n_k(\gxnls)
=
\sumii \P\bigpar{n_k(\gxnls)\ge i}
\le 2 e^{-k/(2B)}
$.
\end{proof}

Let $X_{n,K}\=n^{-1}\sum_{k=0}^K k^r n_k(\gxnls)$
be a partial sum of the sum in \eqref{t1b} for $\gxnls$. Then,
for every fixed $K$, by \refL{LN},
\begin{equation*}
  \begin{split}
\E(X_{n,\infty}-X_{n,K})
&=
\E \lrpar{\sum_{k=K+1}^\infty \frac{k^r  n_k(\gxnls)}{n}}
%\\&
\le
\sum_{k=K+1}^\infty \CCLN k^r e^{-\ccLN k},	
  \end{split}
\end{equation*}
which can be made arbitrarily small by choosing $K$ large.
Since 
$X_{n,K}\pto\sum_{k=0}^K k^r \PoS(\mux)\set k$ as \ntoo{} for every
fixed $K$, \eqref{t1b} follows by standard arguments.
(See, for example, the much more general
\cite[Theorem 4.2]{Bill}.) This completes the proof of \refT{T1x}.

\section{Proof of \refL{L1xy} and \refT{T1}}\label{SpfT1}

\begin{proof}[Proof of \refL{L1xy}]  
We use \refL{Lconf} together with the result of \cite{SJ195} (with
previous partial results by many authors) that states that, for a sequence of
degree sequences $\dd=\dd^{(n)}$ satisfying $\sum_id_i\to\infty$,
if $\sum_i d_i^2 = \O\bigpar{\sum_i d_i}$,
 then
$\liminf \P\bigpar{\gnddx\simple}>0$.
(The converse holds also, see \cite{SJ195}.)
In other words,
for every $K$ there exist constants $a_K$ and $b_K>0$ such that, if 
\begin{align}\label{sj195}
\text{(i)}\q \sum_id_i\ge a_K 
\qquad
\text{and} 
\qquad
\text{(ii)}
\q
\sum_i d_i^2 \le K \sum_i d_i, 
\end{align}
then
\begin{equation}%\label{sj195b}
  \P\bigpar{\gnddx\simple}\ge b_K.
\end{equation}
Let $p(\dd)\=\P\bigpar{\gnddx\simple}$. By \refL{Lconf}, for every $K$, 
\begin{equation}\label{c0}
  \begin{split}
  \P\bigpar{\gxnlns\simple}
&=\E p\bigpar{\dd(\gxnlns)}
\\&
\ge b_K\P\bigpar{\dd(\gxnlns)\text{ satisfies (\ref{sj195})}}.	
  \end{split}
\end{equation}
Thus, it suffices to show that 
$\liminf\P\bigpar{\dd(\gxnlns)\text{ satisfies \eqref{sj195}}}>0$.

First, consider the case $\mux>0$. By \refT{T1x},
$$
\frac1n\sum_id_i(\gxnlns)^r\pto A_r,
\qquad 
r=1,2,
$$
for some constants $A_r>0$.
Hence, taking $K\=A_2/A_1+1$, 
$$\P\bigpar{\dd(\gxnlns)\text{ satisfies \eqref{sj195}}}\to1$$
and the result follows in this case. 

Now suppose that $\mux=0$, which can occur only if
$0 \in \cS$.
 Although the graphs are sparser in this
case, and intuitively it seems more probable that they are simple, we
have not found a really simple proof and have to work harder in this
case. (The proof above is not valid since now $A_1=A_2=0$.)
Let $\cS'\=\cS\setminus\set0$, and let $M\=n-n_0$ be the number of
non-isolated vertices in $\gxnlns$.

Let $0\le m\le n$ and let $V$ be any subset of \set{1,2,\dots,n} with
$|V|=m$.
If we condition $\gxnlns$ on having the set of non-isolated vertices
equal to $V$, we evidently get a random multigraph $\gxmn$ on $V$ (up
to relabelling the vertices) with $n-m$ isolated vertices added. It
follows that, for $r>0$,
\begin{equation}
  \label{kia}
\Bigpar{\sum_id_i(\gxnlns)^r\Bigmid M=m}
\eqd
\sum_id_i(\gxmn)^r.
\end{equation}
Note that the relevant parameter of $\gxmn$ is $m\gl_n/n$, not
$\gl_n$.
Since we consider $0\le m\le n$, and the case $m=0$ is trivial and
thus can be ignored, we have
$0<m\gl_n/n\le \gl_n\le\CC$, and thus \refL{LN} implies that
\begin{equation*}
  \E\sum_id_i(\gxmn)^2
=
\E\sumki k^2 n_k\bigpar{\gxmn}
\le \CC m,
\qquad m\le n,
\end{equation*}
for some constant $\CCx$ not depending on $m$ or $n$.
Furthermore, since $0\notin\cS'$, each vertex degree is at least 1 and 
$\sum_id_i(\gxmn)\ge m$. Consequently, choosing $K=4\CCx$, it
follows by Markov's inequality 
that, for every $m\le n$,  with probability at least $\frac34$,
\begin{equation*}
\sum_id_i(\gxmn)^2
\le 4\CCx m
\le K \sum_id_i(\gxmn).
\end{equation*}
Consequently, by conditioning on $M$ and using \eqref{kia}, 
\begin{equation*}
 \P\bigpar{\text{\eqref{sj195}(ii) holds for }\gxnlns} 
\ge \tfrac34.
\end{equation*}

Hence, whenever
\begin{equation}\label{supp}
\P\biggpar{\sum_id_i(\gxnlns)\ge a_K}\ge \tfrac12,
\end{equation}
 then
\eqref{sj195} fails for $\gxnlns$
with probability at most $\frac12+\frac14=\frac34$,
and thus by \eqref{c0}
\begin{equation*}
  \P\bigpar{\gxnlns\simple}\ge\tfrac14b_K.
\end{equation*}

The only remaining case is when $\mux=0$ and
\eqref{supp} is false.
We recall $\sum_i d_i(G)=\sum_j j n_j(G)$ and define
$\cnx\=\set{\nn\in\cNSn:\sum_j j n_j <a_K}$.
Thus we now have, using \eqref{a8c},
\begin{equation}
  \label{c3a}
\tfrac12 < \P\bigpar{\nn(\gxnlns)\in\cnx}
=\frac1{\zxnlns}\sum_{\nn\in\cnx} \ann .
\end{equation}
Further, $\cnx$ is a finite set 
%$0\le n_j\le a_K$ for $j\ge1$ and
(with $n_j=0$ for $j\ge a_K$), and \eqref{a8} yields
\begin{equation}
  \label{c3}
\ann
\le
\frac{n!}{n_0!}(\ceil{a_K})!!\,\parfrac{\gl_n}{n}^{\frac12\sum j n_j}
\le
\CC
n^{n-n_0}
\parfrac{\gl_n}{n}^{\frac12\sum j n_j},
\quad
\nn\in\cnx.
\end{equation}
We now use \refT{Tmux0}, which shows that $\mux=0$ is possible only
when $1\notin\cS$.
Thus, if $\nn\in\cnx\subset\cNS$, then $n_1=0$ and
$\frac12\sum_j j n_j \ge \sum_2^\infty n_j = n-n_0$;
hence, since $\gl_n=\O(1)=\O(n)$,
by \eqref{c3},
\begin{equation*}
\ann
\le
\CC
n^{n-n_0}
\parfrac{\gl_n}{n}^{n-n_0}
\le\CC,
\qquad
\nn\in\cnx.
\end{equation*}
Therefore, by \eqref{c3a},
\begin{equation*}
  \zxnlns \le 2 \sum_{\nn\in\cnx} \ann
\le\CC.
\end{equation*}
However, if $E_n$ is the empty graph with $n$ vertices and no edges,
then by \eqref{b0},
$\P\bigpar{\gxnlns=E_n}=1/\zxnlns\ge \CCx\qw$. Now $E_n$ is
simple, and so the result follows in this case too.
\end{proof}

\begin{proof}[Proof of \refT{T1}]
  By Lemmas \refand{Lsimple}{L1xy}, for any event $\cE$ and $n$ large
  enough,
  \begin{equation*}
\P\bigpar{\gnlns\in\cE}
\le
\frac{\P(\gxnlns\in\cE)}{\P(\gxnlns\simple)}
\le
\CC\P(\gxnlns\in\cE).
  \end{equation*}
Hence parts (i)--(iii) follow directly from \refT{T1x}.
Similarly,
  \begin{equation*}
	\begin{split}
\P&\bigpar{\gnln\text{ is an \sgraph}}
=
\P\bigpar{\gxnln\text{ is an \sgraph}\mid\gxnln\simple}
\\&
=
\P\bigpar{\gxnln\simple\mid\gxnln\text{ is an \sgraph}}
\frac{\P\bigpar{\gxnln\text{ is an \sgraph}}}{\P\bigpar{\gxnln\simple}}
\\&
=
\frac{\P\bigpar{\gxnlns\simple}}{\P\bigpar{\gxnln\simple}}
\P\bigpar{\gxnln\text{ is an \sgraph}}
	\end{split}
  \end{equation*}
and \ref{T1d} follows by \refT{T1x} and \refL{L1xy} (applied both to
$\cS$ and with $\cS$ replaced by $\bbZo$). 
\end{proof}

\section{Proofs of Theorems \ref{Tgiant} and \ref{Tcore}}\label{sec4.5}

\begin{proof}[Proof of \refT{Tgiant}]
The case $\mux=0$ is trivial by \refT{T1}, as remarked in
  \refR{Rgiant0}, so we will assume $\mux>0$.
  We use the 
results of \citet{MR95,MR98} in the following version, see 
\citet[Theorem 2.3 and Remark 2.7]{SJ204}; we only consider the
limiting degree distribution 
$(p_k)_{k=0}^\infty$ given by $p_k=\PoS(\mux)\set k$.

Let $\nu=\nu(\mux)$, $Q=Q(\mux)$, $\xix$, $\gammax$ and $\zetax$
be as in \refS{giantcore}; the existence of a unique solution
$\xix\in(0,1)$ in (i) follows by \cite[Lemma 5.5]{SJ204}.
By assumption, $p_0+p_2<1$. 
Further, let $\gnd$ be the random graph with given degree
sequence $\dd$, chosen uniformly among all such graphs (assuming that
there is at least one), and let $\Gnd$ and $\GGnd$ be the
largest and second largest components of $\gnd$.

\begin{theorem}
  \label{TMR}
Suppose that, for each $n$,  
$\dd=(d_i)_1^n$ is a sequence of non-negative
integers such that $\sumin d_i$ is even, and that
\begin{romenumerate}
  \item\label{C1p}
$%n_k/n\=
|\set{i:d_i=k}|/n\to p_k$ as \ntoo, for every $k\ge0$;
\item \label{C1d2}
$\sumin d_i^ 2=\O(n)$.
\end{romenumerate}
Then, the following hold for the random  graph \gnd, as \ntoo:
\begin{align*}
  \ver(\Gnd)/n&\pto \gammax,
&
  \edg(\Gnd)/n&\pto \zetax,
\\
  \ver(\GGnd)/n&\pto 0,
&
  \edg(\GGnd)/n&\pto 0.
\end{align*}
\end{theorem}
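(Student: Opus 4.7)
The plan is to prove \refT{TMR} via the configuration model, following \cite{MR95,MR98,SJ204}. Let $\gnddx$ denote the multigraph obtained from a uniformly random pairing of half-edges, as in \refS{Smulti}; conditioned on being simple, $\gnddx \eqd \gnd$. Condition \ref{C1d2} together with \cite{SJ195} yields $\liminf_n \P(\gnddx \simple) > 0$, so it suffices to establish the theorem for $\gnddx$ and transfer to $\gnd$ via $\P(\gnd \in A) \le \P(\gnddx \in A)/\P(\gnddx\simple)$ for any event $A$.

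The heart of the argument is a BFS exploration of a component of $\gnddx$ from a uniformly random starting vertex $v_0$: maintain a set of unmatched ``active'' half-edges at the frontier and, at each step, pair one active half-edge with a uniformly random remaining unpaired half-edge, then add the newly reached vertex (if any) to the explored region along with its other half-edges. By condition \ref{C1p} and the size-biasing inherent in uniform pairing, as long as only $o(n)$ half-edges have been matched, the next vertex reached has degree $k$ with probability approaching $k p_k/\nu$; consequently the exploration from $v_0$ is coupled to a two-stage Galton--Watson tree $\mathcal T$ whose root has offspring distribution $(p_k)$ and whose subsequent individuals have offspring distribution $\tilde p_j \= (j+1) p_{j+1}/\nu$, with mean $Q/\nu + 1$. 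The extinction probability of $\mathcal T$ from a single descendant is the smallest non-negative root $\xi^* \in [0,1]$ of $\xi = \sum_j \tilde p_j \xi^j$, and multiplying by $\nu\xi$ shows this equation is equivalent to $\chi_{\mu}(\xi) = 0$ from \eqref{chidef}; thus $\xi^* = \xix$ in the notation of \refT{Tgiant}. The survival probability from the root is then $1 - \sum_k p_k \xix^k = \gammax$ (which is $0$ precisely when $Q \le 0$), and from the extinction equation $\nu \xix^2 = \sum_k k p_k \xix^k$; comparing with the edge count $\tfrac12 \sum_{v \in \Gnd} d_v$ gives the limiting density $\tfrac12\nu(1-\xix^2) = \zetax$.

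Two standard ingredients convert these branching-process predictions into the claimed convergences. First, Azuma--Hoeffding concentration applied to the martingale tracking the number of active half-edges (equivalently, a second-moment calculation for the indicator that $v$ lies in a component of size $\ge \omega_n$, with $\omega_n \to \infty$ slowly) shows that the number of vertices and the number of edges in ``large'' components concentrate around $n\gammax$ and $n\zetax$ respectively. Second, a sprinkling argument reserves a small fraction of the half-edge matches, completes the rest of the pairing, and then uses the reserved matches to show that any two large components become joined \whp, yielding uniqueness of the giant and $\ver(\GGnd)/n \pto 0$. The main technical obstacle is controlling the depletion of half-edges over $\Theta(n)$ exploration steps so that the size-biased approximation remains valid; this is handled by stopping the exploration before depletion becomes significant, re-randomizing, and iterating --- with additional care when $1 \in \cS$, where $\tilde p_0 > 0$ produces long-lived but ultimately extinct lines that must be shown not to contaminate the identification of the giant.
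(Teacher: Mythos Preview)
The paper does not prove \refT{TMR}; it is stated as a quotation of known results, introduced with ``We use the results of \citet{MR95,MR98} in the following version, see \citet[Theorem 2.3 and Remark 2.7]{SJ204}.'' The paper then immediately passes to the finitary reformulation \refT{TMRfinit} and applies it black-box.

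Your sketch is essentially the standard Molloy--Reed/configuration-model argument from those references, so you are not diverging from the paper so much as reproducing the literature it cites. The outline is broadly correct --- configuration model plus \cite{SJ195} for transfer to simple graphs, branching-process approximation of the BFS exploration with size-biased offspring law, concentration, and sprinkling for uniqueness --- and the identification of $\xix$, $\gammax$, $\zetax$ with the Galton--Watson quantities is right. Two minor remarks: your reference to ``when $1\in\cS$'' is out of place, since \refT{TMR} is stated for a generic degree sequence satisfying (i)--(ii), not for $\cS$-graphs (the $\cS$-structure only enters when the paper feeds the random degree sequence of $\gnlns$ into \refT{TMRfinit}); and the sketch of the depletion/sprinkling step is vague enough that, were a self-contained proof actually required, you would want to follow the cleaner argument in \cite{SJ204} rather than the original \cite{MR95,MR98}.
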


This theorem is stated as a limit result, but it can be reformulated
as follows.

\begin{theorem}
  \label{TMRfinit}
For every $\eps>0$ and $C<\infty$, there exists $\gd>0$ such that if
$n>\gd\qw$ and $\dd=\dn$ is a degree sequence 
such that $\sumin d_i$ is even and 
\begin{romenumerate}
  \item\label{C1pf}
$\sumk\big||\set{i:d_i=k}|/n- p_k\big|<\gd$,
\item \label{C1d2f}
$\sumin d_i^2\le C n$,
\end{romenumerate}
then
\begin{align*}
  \P&\bigpar{|\ver(\Gnd)/n- \gammax| > \eps}<\eps,
&
  \P&\bigpar{|\edg(\Gnd)/n- \zetax| > \eps}<\eps,
\\
  \P&\bigpar{\ver(\GGnd)/n > \eps}<\eps,
&
  \P&\bigpar{\edg(\GGnd)/n > \eps}<\eps.
\end{align*}
\end{theorem}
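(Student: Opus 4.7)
My plan is to derive \refT{TMRfinit} from \refT{TMR} by a standard subsequence (compactness) argument: assume the finitary statement fails, extract a bad sequence of degree sequences, apply the limit theorem along it, and obtain a contradiction. The four conclusions of \refT{TMRfinit} are precisely the quantitative forms of the four convergence-in-probability statements in \refT{TMR}, and hypotheses \ref{C1pf}--\ref{C1d2f} are the quantitative analogues of \ref{C1p}--\ref{C1d2}, so I expect this reduction to go through without real difficulty.

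First I would negate the conclusion. If \refT{TMRfinit} fails, then there exist $\eps>0$ and $C<\infty$ such that for each $j\in\bbN$ one can find $n_j>j$ and a degree sequence $\dd^{(j)}=(d_i^{(j)})_{i=1}^{n_j}$ with $\sum_i d_i^{(j)}$ even, satisfying hypotheses \ref{C1pf}--\ref{C1d2f} with $\gd=1/j$, yet violating at least one of the four probability bounds (so that the violating event has probability $\ge\eps$). A pigeonhole step then produces an infinite set $J\subseteq\bbN$ along which one particular conclusion fails uniformly; without loss of generality, say $\P\bigpar{|\ver(\Gamma_{n_j,\dd^{(j)}})/n_j-\gammax|>\eps}\ge\eps$ for all $j\in J$.

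Next I would verify the hypotheses of \refT{TMR} for the subsequence $(\dd^{(j)})_{j\in J}$. Since $n_j>j\to\infty$, hypothesis \ref{C1p} of \refT{TMR} follows from \ref{C1pf}: for each fixed $k\ge 0$, the term $\bigabs{n_j^{-1}|\set{i:d_i^{(j)}=k}|-p_k}$ is bounded above by the sum in \ref{C1pf}, and hence by $1/j\to 0$. Hypothesis \ref{C1d2} is immediate from \ref{C1d2f} with the uniform constant $C$. Applying \refT{TMR} along $j\in J$ therefore yields $\ver(\Gamma_{n_j,\dd^{(j)}})/n_j\pto\gammax$, which forces $\P\bigpar{|\ver(\Gamma_{n_j,\dd^{(j)}})/n_j-\gammax|>\eps}\to 0$, contradicting the lower bound $\ge\eps$.

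The main (and really the only) piece of care needed is the pigeonhole bookkeeping to handle the four conclusions of \refT{TMRfinit} simultaneously; concretely, I would extract nested subsequences or apply the above argument four times, once per conclusion. Because the target quantities $\gammax,\zetax,0,0$ and the target distribution $(p_k)$ do not vary with $j$, passing to any subsequence of degree sequences causes no drift in what one is trying to prove, so I anticipate no substantive obstacle beyond this routine logical reshuffling.
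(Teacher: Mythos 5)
Your compactness/subsequence argument is correct, and it is precisely the routine reduction the paper has in mind: Theorem~\ref{TMRfinit} is introduced only as a ``reformulation'' of Theorem~\ref{TMR}, with no written proof, and the intended justification is exactly the contradiction-via-extracted-subsequence step you describe. The pigeonhole bookkeeping over the four conclusions and the observation that $n_j>j\to\infty$ so Theorem~\ref{TMR} applies along the extracted sequence are both handled correctly.
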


By \refT{T1}, for every $\eps>0$, a suitable $C$ and sufficiently
large $n$, the random degree sequence 
$\dd(\gnlns)$ satisfies the conditions (i) and (ii) of \refT{TMRfinit}
with probability at least $1-\eps$. 

Since
$(\gnlns\mid\dd(\gnlns=\dd)\eqd\gnd$ by Lemmas \refand{Lconf}{Lsimple}, it
follows that 
$  \P\bigpar{|\ver(\Gnlns)/n- \gammax| > \eps}<2\eps$ if $n$ is large
enough, and similarly for $\edg(\Gnlns)$ and for $\GGnlns$, which
proves \refT{Tgiant}.
\end{proof}

\begin{proof}[Proof of \refT{Tcore}]
 This proof is similar, using \cite[Theorem 2.4]{JL}; we omit the
 details while noting that we now need condition (i) of
 \refT{TMRfinit}, and in addition the condition (ii$'$)
(stronger than (ii) above)
that  $\sumin e^{c d_i} \le Cn$ for some $c>0$.
 This holds for $\dd(\gnlns)$ with
 probability $>1-\eps$ for suitable $c$ and $C$ (that may depend on
 $\eps$), as a consequence of the following corollary of \refL{LN}.

\begin{lemma}
  \label{Lexp}
Assume that $\gl_n\to\gl>0$.
If $c<\ccLN$, then
  \begin{equation*}
\E \sumin e^{c d_i(\gxnlns)} = \E \sumk n_k(\gxnlns) e^{ck} \le \CC n.
  \end{equation*}
\end{lemma}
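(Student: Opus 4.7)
The plan is to derive this as an essentially immediate corollary of \refL{LN}, the content being just that summing the bound from \refL{LN} against $e^{ck}$ yields a convergent geometric series precisely under the hypothesis $c<\ccLN$.

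First I would record the elementary identity $\sum_{i=1}^n e^{c d_i(G)} = \sum_{k\ge0} n_k(G) e^{ck}$, valid for any (multi)graph $G$ on $n$ vertices, which is just a regrouping of the vertex sum by degree. This shows the two expressions in the statement agree, and reduces the problem to bounding $\E\sum_k n_k(\gxnlns) e^{ck}$.

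Next, by monotone convergence (the summands are nonnegative),
\begin{equation*}
\E\sum_{k\ge0} n_k(\gxnlns)\,e^{ck}
=\sum_{k\ge0} e^{ck}\,\E n_k(\gxnlns).
\end{equation*}
Applying \refL{LN}, which gives $\E n_k(\gxnlns)\le \CCLN n\,e^{-\ccLN k}$ uniformly in $k\ge0$, we obtain
\begin{equation*}
\E\sum_{k\ge0} n_k(\gxnlns)\,e^{ck}
\le \CCLN n\sum_{k\ge0} e^{-(\ccLN-c)k}.
\end{equation*}

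Since $c<\ccLN$ by hypothesis, the geometric series converges to $1/(1-e^{-(\ccLN-c)})$, a constant depending only on $c$ and $\cS,\gl$. Absorbing this factor together with $\CCLN$ into a new constant $\CC$ yields the desired bound $\CCx n$. No real obstacle arises: the only subtle point is checking that the constant $\ccLN$ from \refL{LN} is genuinely strictly positive and does not depend on $k$, which is precisely the content of that lemma. (If one wishes uniformity of the constant $\CC$ for $\gl$ in a compact set, the corresponding uniformity asserted at the end of \refL{LN} is inherited directly.)
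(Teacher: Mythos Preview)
Your proof is correct and is exactly the argument the paper has in mind: the lemma is explicitly presented there as a corollary of \refL{LN}, with no further proof given, and your geometric-series computation is the intended (and only natural) derivation.
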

By Lemmas \ref{Lsimple} and \ref{L1xy}, the conclusion of the lemma
is valid for $\gnlns$ also.
\end{proof}

\section*{Acknowledgement}
This research was mainly done during a visit by SJ to the University
of Cambridge, partly funded by Trinity College.

\newcommand\AAP{\emph{Adv. Appl. Probab.} }
\newcommand\JAP{\emph{J. Appl. Probab.} }
\newcommand\JAMS{\emph{J. \AMS} }
\newcommand\MAMS{\emph{Memoirs \AMS} }
\newcommand\PAMS{\emph{Proc. \AMS} }
\newcommand\TAMS{\emph{Trans. \AMS} }
\newcommand\AnnMS{\emph{Ann. Math. Statist.} }
\newcommand\AnnPr{\emph{Ann. Probab.} }
\newcommand\CPC{\emph{Combin. Probab. Comput.} }
\newcommand\JMAA{\emph{J. Math. Anal. Appl.} }
\newcommand\RSA{\emph{Random Struct. Alg.} }
\newcommand\ZW{\emph{Z. Wahrsch. Verw. Gebiete} }
\newcommand\DMTCS{\jour{Discr. Math. Theor. Comput. Sci.} }

\newcommand\AMS{Amer. Math. Soc.}
\newcommand\Springer{Springer-Verlag}
\newcommand\Wiley{Wiley}

\newcommand\vol{\textbf}
\newcommand\jour{\emph}
\newcommand\book{\emph}
\newcommand\inbook{\emph}
\def\no#1#2,{\unskip#2, no. #1,} %(typeset after year) 
\newcommand\toappear{\unskip, to appear}

\newcommand\webcite[1]{%\hfil
   \penalty0\texttt{\def~{{\tiny$\sim$}}#1}\hfill}
\newcommand\webcitesvante{\webcite{http://www.math.uu.se/~svante/papers/}}
\newcommand\arxiv[1]{\webcite{http://arxiv.org/#1}}

\def\nobibitem#1\par{}

\end{document}